\renewcommand{\c}{\mathcal}
\newcommand{\xd}[1]{x_{#1}(\delta_{#1})}
\theoremstyle{plain}
\newtheorem{theorem}{Theorem}
\newtheorem{conjecture}{Conjecture}
\newtheorem{conj}{Conjecture}
\newcommand{\sht}{{\on{sht}}}
\newcommand{\lng}{{\on{lng}}}
\newtheorem{thm}{Theorem}
\newtheorem{cor}{Corollary}
\newtheorem{prop}{Proposition}
\newtheorem{lem}{Lemma}
\newtheorem{rmk}{Remark}
\newtheorem{defn}{Definition}
\newcommand{\bpm}{\begin{pmatrix}}
\newcommand{\epm}{\end{pmatrix}}
\newcommand{\bsm}{\begin{smallmatrix}}
\newcommand{\esm}{\end{smallmatrix}}
\newcommand{\bspm}{\left(\begin{smallmatrix}}
\newcommand{\espm}{\end{smallmatrix}\right)}
\newcommand{\bm}{\begin{matrix}}
\renewcommand{\em}{\end{matrix}}
\newcommand{\bbm}{\begin{bmatrix}}
\newcommand{\ebm}{\end{bmatrix}}
\newcommand{\shortOne}{w[2431542345654234576542314354287654231435426543765428765431]}
\newcommand{\longOne}{w[24315423456542314354276542314354265437654287654231435426543765428765431]}
\newcommand{\nuFour}{[345678243546576]}
\newcommand{\bs}{\backslash}
\newcommand{\C}{\mathbb{C}}
\newcommand{\G}{\mathbb{G}}
\newcommand{\A}{\mathbb{A}}
\newcommand{\Z}{\mathbb{Z}}
\newcommand{\Ind}{\operatorname{Ind}}
\newcommand{\St}{\operatorname{St}}
\newcommand{\st}{\text{ s.t. }}
\newcommand{\tx}{\text}
\newcommand{\la}{\langle}
\newcommand{\ra}{\rangle}
\newcommand{\ve}{\varepsilon}
\newcommand{\on}{\operatorname}
\newcommand{\ol}{\overline}
\renewcommand{\Re}{\on{Re}}
\newcommand{\gm}{\gamma}
\newcommand{\sg}{\sigma}
\newcommand{\quo}[1]{#1(F)\bs #1(\A)}
\newcommand{\il}{\int\limits}
\newcommand{\Supp}{\on{Supp}}
\newcommand{\f}{\mathfrak}
\renewcommand{\o}{\f o}
\newcommand{\ssm}{\smallsetminus}
\begin{document}
\date{}

\title{\bf A Doubling Integral for $G_2$}
\author{\bf  David Ginzburg and Joseph Hundley  }

\thanks{The first named author is partly supported by a grant from the Israel
Science Foundation no. 2010/10
The second named author was supported  by NSA
Grant MSPF-08Y-172
and NSF Grant DMS-1001792 during the period this work was completed.
}

\address{School of Mathematical Sciences\\
Sackler Faculty of Exact Sciences\\ Tel-Aviv University Israel
69978\\ and \\
Mathematics Department,  MC 4408\\
Southern Illinois University Carbondale\\
1245 Lincoln Dr. \\
Carbondale, IL 62901 }

\maketitle \baselineskip=18pt

\tableofcontents

\section{ Introduction}
In this paper we introduce a new global integral which represents
the standard $L$ function attached to a cuspidal representation of
the exceptional group $G_2(\A )$. Here $\A $ is the adele
ring of a global field $F$. In \cite{PS-R} the authors introduced a
global integral which represents the standard $L$ function for a
classical group  $H$. To describe their construction, for $i=1,2$,
let $\sigma_i$ denote two cuspidal representations of $H(\A )$.
Their global integral is given by
$$\int\limits_{H(F)\times H(F)\backslash H(\A )\times H({\A})}\varphi_{\sigma_1}(h_1)\varphi_{\sigma_2}(h_2)E((h_1,h_2),s)dh_1dh_2$$
Here $E(\cdot,s)$ is a certain Eisenstein series. Unfolding the
integral, one obtains the bilinear form
$$<\pi_1(h)\varphi_{\sigma_1},\varphi_{\sigma_2}> \ =\int\limits_{H(F)\backslash
H(\A )}\varphi_{\sigma_1}(h_1h)\varphi_{\sigma_2}(h_1)dh_1$$ as
inner integration. Using that, the authors established the fact that
the global integral is Eulerian. Moreover, assuming that $\sigma_1$
and $\sigma_2$  are contragredient, then this integral is nonzero
for some choice of data. Hence, the above global integral is nonzero
for {\sl all} cuspidal representation $\sigma_1$ of $H(\A )$.
Integrals of this type are now known as ``doubling
integrals.''

In this paper we construct a doubling integral which represents the
standard $L$ function for $G_2(\A )$. The global integral we
construct uses a certain Eisenstein series defined on the
exceptional group $E_8(\A )$. It is introduced in section \ref{section:  the global integral}
integral \eqref{global1}. Thus, as in the classical groups, we
obtain an integral construction which represents the standard $L$
function, and which is not zero for {\sl all} cuspidal
representations.
By attaching a character $\chi$ to the Eisenstein
series, our construction actually represents the twisted $L$
function.

A  global construction for this $L$ function is given in \cite{G1}, which is valid only for generic cuspidal representations. Another
construction for this $L$ function, which is valid also for cuspidal
representations which are not generic, is given in \cite{S}. In
contrast to our construction and to \cite{G1}, this global integral
unfolds to a non-unique model.

In this paper we prove the very basic properties of our
construction. In section \ref{section:  the global integral}, after some preparation we introduce the
global integral, unfold it, and show that it is Eulerian. In the
third section, using MacDonald's formula, we carry out the
unramified computations. In sections \ref{section:  normalizing factor} and \ref{s: calculation of I(s,t)} we carry out some
computations which we use in the proof of the unramified
calculations.
For some of the calculations, we used the software LiE \cite{L} and  another
software program \cite{egut}, written by the second named author.
This is mainly due to the sheer size
of the calculations, owing to the
large number of roots in $E_8,$
the size of the Weyl group, etc., as
well as to the large number of such calculations
which must be performed.
 It is worth noting that any individual calculation which was performed with software can also be
 checked by hand.

We also mention, that as in \cite{PS-R} pages 50-51,  the local
unramified computation gives us a definition of a generating
function for the standard $L$ function. Thus, it follows from
section \ref{section: the unramified computations} equation \eqref{unra1}, that if we define
$N(s)$ to be the normalizing factor of the Eisenstein series (cf. section \ref{section:  normalizing factor})
and we define
$$\Delta_s(g)=N(s)\int\limits_{U_0(F)}f(w_0zu(1,g),s\psi_U(u)du$$
then
$$\int\limits_{G_2(F)}\omega_\pi(g)\Delta_s(g)dg=L(\pi,s)$$

We expect that our global construction will have certain
applications in the study of this specific $L$ function, and its
twists by a character $\chi$. The first, is the study of the poles
of this $L$ function. The construction in \cite{G1} implies that for
generic cuspidal representations,  this $L$ function can have at
most a simple pole. As follows from section 4, one expects that the
Eisenstein series we use would have at most double poles. In fact
one expects that there are cuspidal representations, which are CAP
with respect to the Borel subgroup, whose $L$ functions will have a
double pole. Such CAP representations were constructed in
\cite{G-G-J}.

In the near future we hope to prove the following

\begin{conjecture}
The twisted partial standard $L$ function $L^S(\pi\otimes\chi,s)$
can have at most a double pole.
\end{conjecture}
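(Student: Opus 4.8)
We outline the approach we intend to take. The plan is to transfer to $L^S(\pi\otimes\chi,s)$ the bound on the poles of the $E_8(\A)$ Eisenstein series entering the global integral \eqref{global1}, in the same way that the poles of the standard $L$-function are controlled by those of the Siegel Eisenstein series in the classical doubling method of \cite{PS-R}. Throughout, $S$ denotes the finite set of places containing the archimedean ones together with all places at which $\pi$, $\chi$, or the relevant additive character ramifies.

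First I would record the basic identity in the form it is to be used. Take the two cuspidal representations in \eqref{global1} to be $\sigma_1=\pi$ and $\sigma_2=\wt\pi$. By the unfolding of Section \ref{section:  the global integral} the resulting global integral $I(s,\chi)$, attached to a factorizable choice of data, is Eulerian, say $I(s,\chi)=\prod_v I_v(s,\chi_v)$; by the unramified computation of Section \ref{section: the unramified computations}, equation \eqref{unra1}, one has $I_v(s,\chi_v)=L_v(\pi_v\otimes\chi_v,s)/N_v(s)$ for every $v\notin S$, where $N(s)=\prod_v N_v(s)$ is the normalizing factor of Section \ref{section:  normalizing factor}. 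Writing $\c{E}^*((g_1,g_2),s,\chi):=N(s)\,E((g_1,g_2),s,\chi)$ for the normalized Eisenstein series, this yields, first for $\Re s\gg0$ and then by meromorphic continuation,
$$\int\limits_{\quo{G_2}}\int\limits_{\quo{G_2}}\varphi_{\pi}(g_1)\,\varphi_{\wt\pi}(g_2)\,\c{E}^*((g_1,g_2),s,\chi)\,dg_1\,dg_2\ =\ L^S(\pi\otimes\chi,s)\ \prod_{v\in S}J_v(s,\chi_v),$$
where $J_v:=N_v(s)\,I_v(s,\chi_v)$ is the normalized local integral at $v\in S$. As in \cite{PS-R}, one then uses the local theory at the places of $S$ to choose the data there so that each $J_v(s,\chi_v)$ is holomorphic and nonvanishing on all of $\C$ — a nonzero constant times a power $q_v^{a_v s}$ suffices — without making the left-hand side vanish identically; the latter is the doubling principle, and uses that $\pi$ and $\wt\pi$ are contragredient. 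For such a choice of data, $L^S(\pi\otimes\chi,s)$ differs from the displayed integral by a factor holomorphic and nonvanishing on $\C$.

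It therefore suffices to bound the orders of the poles of $\int\int\varphi_{\pi}\,\varphi_{\wt\pi}\,\c{E}^*(s,\chi)$. Since $\varphi_{\pi}$ and $\varphi_{\wt\pi}$ are cusp forms they decay rapidly on $\quo{G_2}$, so this integral converges absolutely at every $s$ at which $\c{E}^*(\cdot,s,\chi)$ is finite, and at any point $s_0$ its Laurent expansion in $s-s_0$ is obtained by integrating termwise the Laurent coefficients of $\c{E}^*$ at $s_0$, which are automorphic forms of moderate growth on $E_8(F)\backslash E_8(\A)$, against the rapidly decreasing $\varphi_{\pi}\otimes\varphi_{\wt\pi}$. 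Hence the order of the pole of the integral at $s_0$ is at most the order of the pole of $\c{E}^*$ at $s_0$ as an automorphic form on $E_8(\A)$. By the theory of Eisenstein series (Langlands), the poles of $\c{E}^*$ are among those of its constant terms along the standard parabolics, which are finite sums of intertwining operators, each factoring into rank-one operators that are ratios of completed abelian $L$-functions (Gindikin--Karpelevich). The computation of the normalizing factor and of these constant terms in Section \ref{section:  normalizing factor} is designed to bound, uniformly in $s_0$, the number of rank-one factors that can be simultaneously singular, and it indicates that this number never exceeds $2$. Combining this with the previous paragraph gives that the order of the pole of $L^S(\pi\otimes\chi,s)$ at every $s_0$ is at most $2$, which is the conjecture.

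The main obstacle is precisely the last input: turning the computations of Sections \ref{section:  normalizing factor} and \ref{s: calculation of I(s,t)} into a rigorous proof that the degenerate Eisenstein series on $E_8(\A)$ appearing in \eqref{global1} has poles of order at most $2$. This is a combinatorial problem about the root system of $E_8$ — at each candidate point $s_0$ one must count how many of the rank-one factors occurring across all the relevant intertwining operators are singular there, and check that normalizing by $N(s)$ does not raise the count — and, owing to the large number of roots of $E_8$ and the size of its Weyl group, it is what forces the use of \cite{L} and \cite{egut}. A secondary, and more routine, ingredient is the local statement invoked above, namely that the ramified integrals $J_v(s,\chi_v)$ admit sections realizing units on $\C$; this is the standard ``good sections'' step of the doubling method. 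Granting these, the conjecture follows. In the opposite direction, applying the same construction to the cuspidal representations of $G_2(\A)$ that are CAP with respect to the Borel subgroup, constructed in \cite{G-G-J}, should exhibit an honest double pole and show that the bound $2$ is attained.
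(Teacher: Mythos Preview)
The statement you are addressing is labeled a \emph{Conjecture} in the paper, and the paper does not prove it; immediately before stating it the authors write ``In the near future we hope to prove the following.'' So there is no proof in the paper to compare against. What the paper does supply is exactly the machinery your outline invokes: the unfolding (Theorem~\ref{th1}) producing the pairing $\langle\pi(g)\varphi_{\pi},\varphi_{\wt\pi}\rangle$, the unramified identity (Theorem~\ref{main local theorem}), and the heuristic for the pole structure of the Eisenstein series (Section~\ref{section:  normalizing factor}). Your strategy is the one the paper clearly has in mind and is the direct analogue of \cite{PS-R}.

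You have correctly located the genuine gap yourself. The step ``$\c E^*(\cdot,s,\chi)$ has poles of order at most $2$'' is not proved anywhere in the paper; it is essentially Conjecture~\ref{poles}, and Section~\ref{section:  normalizing factor} only records which ratios of abelian $L$-functions occur in the constant term along $P_2$, it does not carry out the analysis of all constant terms or of the normalized intertwining operators needed to bound the actual order of pole of the degenerate $E_8$ Eisenstein series. So your argument is conditional on an input of the same strength as the conjecture you are trying to prove. Two smaller points deserve care as well. First, your displayed identity suppresses the Fourier coefficient along $U$ against $\psi_U$ that is part of \eqref{global1}; this is harmless for bounding pole orders (integration over the compact quotient $U(F)\backslash U(\A)$ cannot raise them), but it should be written in. Second, the ``good sections'' step at $v\in S$---arranging $J_v=N_vI_v$ to be an entire nonvanishing function while keeping the global pairing nonzero---is not established in this paper for the present $(E_8,P_2,\psi_U)$ set-up; the analogue in \cite{PS-R} relies on a concrete local analysis that would have to be redone here. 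Granting Conjecture~\ref{poles} and the ramified local input, your outline is the expected proof; without them it remains, as in the paper, a conjecture.
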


\section{ The Global Integral}\label{section:  the global integral}

In this section we introduce the global integral, and carry out the
unfolding process.
We work with the unique split $F$-group of
type $E_8,$ which we assume to be equipped
with a choice of maximal torus $T$ and Borel subgroup
$B=TU_{\max}.$  Here $U_{\max}$ is a maximal unipotent subgroup
of $E_8.$
For $H \subset E_8$ a $T$-stable subgroup,
$\Phi(H,T)$ is the set of roots of $T$ in $H.$
Also for any reductive group $H$ with maximal
torus $S,$ $W(H,S)$ is the Weyl group of $H$
relative to $S.$  For $\Phi(E_8,T)$ and $W(E_8,T),$
we may write $\Phi$ and $W$ respectively.
In this paper we shall label the roots of $E_8$
by $\alpha_i, (1 \le i \le 8)$. The labeling we use is as in \cite{G-S}. Let $w_i$
denote the simple reflection corresponding to the root $\alpha_i$.
We shall denote the product $w_{i_1}\ldots w_{i_n}$ by $w[i_1\ldots
i_n]$.
Write $U_{\alpha}$ for the one-dimensional unipotent
subgroup attached to the root $\alpha,$ and equip $G$
with a realization $\{ x_\alpha: \G_a \to U_\alpha \mid \alpha \in \Phi(E_8, T)\},$ consisting of an isomorphism
$\G_a \to U_\alpha$ for each root $\alpha.$
We assume that
the structure constants are
determined as in \cite{G-S}.
For $1 \le i \le 8$, the product
$x_{\alpha_i}(1) x_{-\alpha_i}(-1) x_{\alpha_i}(1)$  is
a representative for the simple reflection $w_i.$  This,
in turn, determines a standard representative
in $G(F)$ for any word in the simple reflections.
We shall often abuse notation by conflating this
representative with the Weyl word it represents.

\subsection{Eisenstein Series}\label{ss:EisensteinSeries}
For $1 \le i \le 8$ we let $P_i$
denote the standard maximal parabolic subgroup
of $E_8$ such that $\alpha_i$ is a root of the unipotent
radical
and the remaining simple roots of $E_8$ are roots
of the standard
Levi factor.  We consider the group $P_2.$  It's Levi factor,
$M_2,$ is isomorphic to $\{ g \in GL_8: \det g \text{ is a square}\}.$  The group $M_2$ has a rational character
whose square is $\det.$
(Indeed, $M_2$ acts on the highest weight vectors in the
second fundamental representation of $E_8$
by such a representation.)
Denote this
rational character by $\det^{\frac 12}.$
 The modular quasicharacter $\delta_{P_2}$ of $P_2$ is
the  seventeenth power of $\det^{\frac 12}.$

Let
$\chi$ be a character of $\A^\times$ trivial
on $F^\times.$  Regard $\chi$ as a representation
of $M_2(\A)$ by composing with $\det^\frac 12.$
Consider the induced representation
$Ind_{P_2(\A)}^{E_8(\A )}\delta_{P_2}^s \chi.
$  This representation has an automorphic realization
as a space of Eisenstein series.  Specifically,
for $f_{s, \chi} \in   Ind_{P_2(\A)}^{E_8(\A )}\delta_{P_2}^s \chi$ the corresponding Eisenstein series
is defined by $E(h, f_{s, \chi}):=\sum_{\gm \in P_2(F) \bs E_8(F)} f_{s, \chi}( \gm g)$ for $\Re(s)$ large and by
meromorphic continuation elsewhere.

\subsection{Fourier Coefficient}\label{ss:FourierCoefficient}
Next we describe a Fourier coefficient which plays an important role
in this construction. As explained in \cite{G2} one can associate
with every unipotent orbit of a given group which defined over an
algebraic closed field, a set of Fourier coefficients. In \cite{G2}
it is explained how to do it in the classical groups, but it is
similar in the exceptional groups. The  Fourier coefficient we will
define is attached to the unipotent orbit $2A_2$ of $E_8$.

  In the notation
fixed above,  $P_1$ denotes the standard maximal parabolic subgroup of $E_8$
whose Levi factor  $M_1$
is the product of
a derived subgroup isomorphic to $Spin_{14}$
and a one-dimensional torus.
 Let $U$ denote its unipotent
radical of $P_1.$  Then $U$ is a two step unipotent group whose dimension is
78.  The center $Z(U)$ is $14$ dimensional and can be identified
with the standard (``vector'') representation of $Spin_{14}.$
The quotient $U/Z(U)$ is $64$ dimensional and can be
identified with a half-spin representation of $Spin_{14}.$

Let $\psi$ denote a nontrivial additive character of the group
$F\backslash \A $ .
The choice of $\psi$ identifies $F$ with
the Pontriagin dual of $F\bs \A.$
Characters of $U(\A)$  trivial on $U(F)$
are then identified with the
$F$-points of the
rational representation of $M_1$ which is dual to $U/Z(U).$
This would correspond to the other half-spin representation of $Spin_{14}.$

Over an algebraically closed field, this
representation of $M_1$ has
an open orbit.
As recorded in the tables on p. 405 of \cite{C}
and p.200 of  \cite{Kac}, the identity component
 is isomorphic to $G_2 \times G_2.$
 Moreover, using the discussion
 on
 p. 267 of \cite{Kimura}, it's not hard
 to show that the number of components
 in the stabilizer is two.
We now choose a character
of $U(F) \bs U(\A)$ which corresponds to a point in
general position.

Define the character $\psi_U$ of
$U(F)\backslash U(\A )$ as follows.
Using the fixed realization $\{x_\alpha\}$,
write $u \in U(\A)$ as
$u=x_{11221111}(r_1)x_{11122111}(r_2)x_{12232210}(r_3)x_{11233210}(r_4)u',$
where $u'$ is a product of elements $x_\alpha(u_\alpha)$
 corresponding to roots $\alpha$
 which are not among the
 four listed above. Then we define $\psi_U(u)=\psi(r_1+r_2+r_3+r_4)$.  It's not difficult to calculate
 the identity component of the stabilizer of this character, which turns out to
 be isomorphic to $G_2 \times G_2,$ proving that the
 character is indeed in general position.

To be precise, the images of
\begin{equation}\label{firstg2}
x_{\pm 00010000}(r);\ x_{\pm01000000}(r)x_{\pm00100000}(r)x_{\pm00001000}(-r);\
x_{\pm01010000}(r)x_{\pm00110000}(r)x_{\pm00011000}(r)
\end{equation}
$$x_{\pm01011000}(r)x_{\pm01110000}(r)x_{\pm00111000}(-r);\ x_{\pm01111000}(r);\
x_{\pm01121000}(r)$$
generate a subgroup of $E_8$ which is
isomorphic to $G_2.$  One may check that
an element of this group which
normalizes/centralizes its maximal torus also normalizes/centralizes the full maximal
torus of $E_8.$  This induces an embedding
of the Weyl group of this copy of $G_2$ into that
of $E_8.$
The images of the two simple reflections are $w_4$ and $w[235]$.

A second copy of $G_2$ is generated by
the images of the following homomorphisms:\begin{equation}\label{secondg2}
x_{\pm00000010}(r);\ x_{\pm00000001}(r)x_{\pm01011100}(-r)x_{\pm00111100}(r);\
x_{\pm00000011}(r)x_{\pm01011110}(r)x_{\pm001111100}(-r)
\end{equation}
$$x_{\pm01122210}(r)x_{\pm01011111}(r)x_{\pm00111111}(-r);\ x_{\pm01122211}(r);\
x_{\pm01122221}(r).$$
In this case, the simple reflections in the Weyl group
map to  $w_7$ and $w[865423456]$ in the
Weyl group of $E_8.$

Finally, suitable representatives for the Weyl element $w[657486576]$  stabilize $\psi_U$
and act on the two copies of $G_2$ by reversing them.

We shall denote the stabilizer of $\psi_U$ by $H^\pm$
and its identity component by $H.$
Note  that $T \cap H$
and $U_{\max} \cap H$ are
 a maximal torus and maximal unipotent
 subgroup of $H,$ respectively.
The one parameter subgroups listed above
equip each copy of $G_2$ with a realization
and the structure constants of the two realizations.  Thus, by equipping $G_2$ (regarded as an abstract group) with a realization
having the same structure constants, we pin down
a specific identification of $H$ with $G_2 \times G_2. $

One may now form the Fourier coefficient mapping
$$\varphi \mapsto \varphi^{(U, \psi_U)}(h):= \int\limits_{U(F)\backslash U(\A )}\varphi(uh)\psi_U(u)du,
\qquad \left( \varphi \in C^\infty(HU(F) \bs E_8(\A))\right)
$$
with image in the space
$C^\infty( H(F)U(\A)_{\psi_U} \bs E_8(\A)),$ of smooth,
left $H(F)$-invariant, $(U(\A), \psi_U)$-equivariant  functions $E_8(\A) \to \C.$

\subsection{Global Integral}
In order to define a global integral, we apply the Fourier
coefficient mapping defined in section \ref{ss:FourierCoefficient} to the Eisenstein series of section \ref{ss:EisensteinSeries}.
The result is a smooth function of uniformly
moderate growth $H(F) \bs H(\A) \to \C.$
It may therefore be integrated over $H(F) \bs H(\A)$ against a cusp form.
 Thus, let $\pi=\pi_1\otimes \pi_2$
denote an irreducible cuspidal representation of $H(\A )$.

The global integral we consider is
\begin{equation}\label{global1}
\int\limits_{G_2(F)\bs
G_2(\A )}\ \int\limits_{G_2(F)\bs
G_2(\A )}\varphi_{\pi_1}(g_1)\varphi_{\pi_2}(g_2) \int\limits_{ U(F)\backslash U(\A)}E(u(g_1,g_2),f_{s, \chi})\psi_U(u)\,du\,dg_1\,dg_2
\end{equation}
Here $G_2\times G_2$ is embedded in $E_8$ as the stabilizer of the
character $\psi_U$ as described above. Also, $\varphi_{\pi_1}$ and
$\varphi_{\pi_2}$ are vectors in the space of $\pi_1$ and $\pi_2$
respectively.

\subsection{Main Global Result}
The main result of this section is the following
\begin{theorem}\label{th1}
Let $w_\lng$ denote the shortest
element of $W(M_2, T) w_\ell W(M_1, T),$
where $w_\ell$ is the longest element of $W.$
Let $\nu_0 = w\nuFour,$ and let $w_0 = w_{\lng}\nu_0.$  Let $U_0 = U \cap w_0^{-1} \ol{U}_{\max} w_0= \prod_{\alpha \in \Phi(U,T): w_0 \alpha < 0}U_\alpha.$  Denote
 $$<\pi_1(g)\varphi_{\pi_1},\varphi_{\pi_2}> \ =\int\limits_{G_2(F)\backslash
G_2(\A )}\varphi_{\pi_1}(g_1g)\varphi_{\pi_2}(g_1)dg_1,$$
and let
$z=x_{00011100}(1)x_{00001110}(1)x_{00000111}(-1)$.
Then 
 the integral \eqref{global1} is equal
to
\begin{equation}\label{global2}
\int\limits_{G_2(\A )}\int\limits_{U_0(\A)}<\pi_1(g)\varphi_{\pi_1},\varphi_{\pi_2}>f_{s,\chi}(w_0zu(g,1))\psi_U(u)\,du\,dg.
\end{equation}
for all $\text{Re}(s)$ large.
\end{theorem}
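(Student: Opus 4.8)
The plan is to carry out a standard ``unfolding'' argument in two stages: first unfold the Eisenstein series against the Fourier coefficient, and then massage the resulting sum over a double-coset space until the $G_2(F)\bs G_2(\A)$-integration in one of the two variables collapses into the bilinear form $\la\pi_1(g)\varphi_{\pi_1},\varphi_{\pi_2}\ra$, leaving a single copy of $G_2(\A)$ and an integration over $U_0$. First I would substitute the definition $E(h,f_{s,\chi})=\sum_{\gm\in P_2(F)\bs E_8(F)}f_{s,\chi}(\gm h)$ into \eqref{global1}, for $\Re(s)$ large so that everything converges absolutely and the order of summation and integration may be freely interchanged. The inner $U$-integration then turns the sum over $P_2(F)\bs E_8(F)$ into a sum over $P_2(F)\bs E_8(F)/U(F)$ of $U(F)$-orbit-integrals, and after collapsing the quotient $HU(F)\bs E_8(F)$ (using that $H(F)U(F)$ is exactly the group over which $\varphi_{\pi_1}(g_1)\varphi_{\pi_2}(g_2)$ together with $\psi_U$ transform) one is left with a sum over the double coset space $P_2(F)\bs E_8(F)/H(F)U(F)$.

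The key geometric input is the structure of this double coset space. By the Bruhat decomposition, cosets of $P_2(F)\bs E_8(F)/B(F)$ are indexed by $W(M_2,T)\bs W$, and one refines this to $P_2(F)\bs E_8(F)/P_1(F)$-type data to understand how the orbits interact with $U=U_1$ and with $H$. The claim implicit in the theorem is that, after imposing the non-vanishing of the Fourier coefficient (i.e. after discarding all cosets on which the character $\psi_U$ integrates to zero and all cosets for which the corresponding inner period of the cusp forms $\varphi_{\pi_i}$ vanishes by cuspidality), exactly one double coset survives, and its contribution is \eqref{global2}. The distinguished representative is $w_0 z = w_{\lng}\nu_0 z$, where $w_{\lng}$ is the minimal-length representative of the relevant double coset in the Weyl group, $\nu_0 = w\nuFour$ accounts for the position of $H$ inside $E_8$, and $z=x_{00011100}(1)x_{00001110}(1)x_{00000111}(-1)$ is a torus-or-unipotent correction needed so that the representative genuinely stabilizes (the relevant part of) $\psi_U$. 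The set $U_0 = U\cap w_0^{-1}\ol U_{\max}w_0 = \prod_{\alpha\in\Phi(U,T):\,w_0\alpha<0}U_\alpha$ is precisely the ``leftover'' piece of $U$ on which $f_{s,\chi}(w_0 z u\,\cdot\,)$ is not left-invariant, so it cannot be absorbed and must remain as an explicit integration; the complementary part of $U$ either is killed by $\psi_U$-equivariance of $f_{s,\chi}\circ(\text{left translation})$ or integrates to give the $G_2(F)\bs G_2(\A)$-period against $\varphi_{\pi_1}$ that produces the bilinear form.

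Concretely, after selecting the surviving coset, the outer integral becomes
\[
\int_{G_2(F)\bs G_2(\A)}\int_{G_2(F)\bs G_2(\A)}\int_{U_0(\A)}\varphi_{\pi_1}(g_1)\varphi_{\pi_2}(g_2)\,f_{s,\chi}\bigl(w_0 z\,u\,(g_1,g_2)\bigr)\psi_U(u)\,du\,dg_1\,dg_2 .
\]
Now I would use the fact that $w_0 z$ conjugates the diagonally-placed $G_2\times G_2$ so that $f_{s,\chi}(w_0 z\,u\,(g_1,g_2))$ depends on $(g_1,g_2)$ only through $g_1 g_2^{-1}$ (this is where the property that $w[657486576]$ swaps the two $G_2$'s, recorded in the excerpt, enters): change variables $g_1\mapsto g_1 g_2$ in the first $G_2$-integration, which unfolds $G_2(F)\bs G_2(\A)$ in that variable into all of $G_2(\A)$, producing exactly
\[
\int_{G_2(\A)}\int_{U_0(\A)}\Bigl(\int_{G_2(F)\bs G_2(\A)}\varphi_{\pi_1}(g_1 g_2)\varphi_{\pi_2}(g_2)\,dg_2\Bigr)f_{s,\chi}\bigl(w_0 z\,u\,(g_1,1)\bigr)\psi_U(u)\,du\,dg_1 ,
\]
and the inner bracket is by definition $\la\pi_1(g_1)\varphi_{\pi_1},\varphi_{\pi_2}\ra$ after relabelling. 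Renaming $g_1$ as $g$ gives \eqref{global2}.

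The main obstacle is the double-coset analysis in the second paragraph: one must (a) enumerate $P_2(F)\bs E_8(F)/H(F)U(F)$ precisely enough to identify all representatives, (b) show that on every coset other than the one through $w_0 z$ either the $\psi_U$-integral over a non-trivial unipotent subgroup vanishes identically, or the cuspidal forms $\varphi_{\pi_i}$ are integrated over the unipotent radical of a proper parabolic of $G_2$ (so the contribution dies by cuspidality), and (c) pin down the correct representative $w_0 z$ — in particular verifying that the extra unipotent factor $z$ is forced and that $U_0$ is exactly $\prod_{w_0\alpha<0}U_\alpha$ inside $U$. This is a finite but delicate root-combinatorial computation in $E_8$ (carried out with the aid of LiE and the authors' own software, as noted in the introduction), and it is the real content of the theorem; everything else is the routine unfolding bookkeeping sketched above, valid for $\Re(s)$ large where all the manipulations are justified by absolute convergence.
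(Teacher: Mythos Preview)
Your overall strategy matches the paper's: unfold the Eisenstein series, reorganize as a sum over $P_2(F)\bs E_8(F)/U(F)H(F)$, kill all but one double coset either because $\psi_U$ is nontrivial on $U^\sigma$ or by cuspidality, and identify the survivor as $w_0z$. Your last paragraph correctly flags the double-coset analysis as the substantive (computer-assisted) content, and the two vanishing mechanisms you name are exactly the ones the paper codifies in its Lemma~\ref{vanishing conditions}.

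There is, however, a genuine gap in your final step. When you restrict to the surviving coset and display
\[
\int_{G_2(F)\bs G_2(\A)}\int_{G_2(F)\bs G_2(\A)}\int_{U_0(\A)}\varphi_{\pi_1}(g_1)\varphi_{\pi_2}(g_2)\,f_{s,\chi}\bigl(w_0 z\,u\,(g_1,g_2)\bigr)\psi_U(u)\,du\,dg_1\,dg_2,
\]
you have silently dropped the inner sum over $\bigl(H\cap(w_0z)^{-1}P_2w_0z\bigr)(F)\bs H(F)$ that is part of $E_{w_0z}$. The correct $H$-domain after unfolding is $\bigl(H\cap(w_0z)^{-1}P_2w_0z\bigr)(F)\bs H(\A)$, not $(G_2(F)\bs G_2(\A))^2$. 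The key fact that drives the last manipulation---proved in the paper as a separate lemma (Lemma~\ref{H cap w0 z P2 is diagonal G2})---is that $H\cap(w_0z)^{-1}P_2w_0z$ is exactly the diagonally embedded $G_2$. Since $G_2$ has no nontrivial characters, $\delta_{P_2}^s\chi$ is trivial on its $w_0z$-conjugate inside $P_2$, so $f_{s,\chi}(w_0z\,u\,(g_1,g_2))$ genuinely depends only on $g_2^{-1}g_1$ (after the obvious change of variable in $u$), and one may parametrize $\Delta G_2(F)\bs H(\A)$ as $G_2(\A)\times\bigl(G_2(F)\bs G_2(\A)\bigr)$ to obtain the bilinear form.

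You instead attribute the ``depends only on $g_1g_2^{-1}$'' property to the swapping Weyl element $w[657486576]$. That element does not occur in $w_0z$; in the paper it plays a different role, namely checking that the $H^\pm$-double coset through $w_0z$ coincides with its $H$-double coset (so that working with $H$ rather than the full stabilizer $H^\pm$ loses nothing). Finally, your sentence ``change variables $g_1\mapsto g_1g_2$ \dots\ which unfolds $G_2(F)\bs G_2(\A)$ in that variable into all of $G_2(\A)$'' cannot be correct as written: a change of variables does not enlarge a domain of integration. The enlargement to $G_2(\A)$ comes precisely from absorbing the missing sum over $\Delta G_2(F)\bs H(F)$ into the integral over $H(F)\bs H(\A)$.
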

\begin{proof}
Throughout this proof, we assume $s$ lies in the
domain of absolute convergence for $E(f_{s, \chi},g).$
For $\gm_0 \in E_8(F)$ define
$$E_{\gm_0}(g, f_{s, \chi}):=
\sum_{\gm \in (\gm_0^{-1} P_2 \gm_0 \cap UH)(F) \bs UH(F)} f_{s, \chi}(\gm_0 \gm g).
$$
Clearly, the sum is absolutely
convergent.  Moreover $E_{\gm_0}(g, f_{s, \chi})$ is left
$UH(F)$-invariant for each $\gm_0$ and
$$
E(g, f_{s, \chi}) = \sum_{ \gm_0 \in P_2(F)\bs E_8(F) / UH(F) } E_{\gm_0}(g, f_{s, \chi}).
$$
Since $E_{\gm_0}$ is $UH(F)$-invariant, we can consider
its Fourier coefficient $E_{\gm_0}^{(U, \psi_U)}.$

Rather than look for an exact set of representatives for the
double cosets $P_2(F)\bs E_8(F)/UH(F)$  it is convenient to work with a
larger
subset of $E_8(F)$ which clearly contains a set of representatives.
Write $L_{4,7}$ for the
subgroup of $E_8$  generated by $U_{\pm \alpha_4}, U_{\pm \alpha_7}.$  Clearly,
$L_{4,7} \subset H.$
Let $\ol{U}_{\max} = \prod_{\alpha < 0} U_\alpha$ denote the maximal unipotent
subgroup of $E_8$ opposite $U_{\max}$ and for $w \in W$
let $N_w= U_{\max} \cap w^{-1} \ol{U}_{\max}w = \prod_{\alpha > 0 , \; w\alpha < 0 } U_{\alpha}.$  Let $$\dot W(M_2, L_{4,7}) = \{ \sg \in W(E_8,T): \sigma \text{ is of minimal length in
}W(M_2, T) \cdot \nu \cdot \la w[4],w[7]\ra\sg\}.$$  Here $\la w[4],w[7]\ra$
denotes the subgroup of $W(E_8, T)$ generated by the $w[4]$ and $w[7].$
Then it follows from the Bruhat decomposition that the  set
\begin{equation}\label{set of reps sg delta}
\{ \sg \delta\mid,\sg \in \dot W(M_2, L_{4,7}) \; \delta \in M_1 \cap N_\sg(F)\}.
\end{equation}
is a set of double coset representatives for $P_2(F) \bs E_8(F)/ U(F)L_{4,7}(F),$
and hence surjects onto $P_2(F) \bs E_8(F)/U(F)H(F).$

We say that $w \in W$ is  left $M_2$ reduced if it is the shortest element of $W(M_2, T) \cdot w.$
Assume that $\gm_0 = w \mu,$ with
$w \in W$
 left ${M_2}$ reduced
and $\mu \in M_1(F).$  This certainly
includes the case $\gm_0=\sg \delta$ as in \eqref{set of reps sg delta}.
Then it can be shown that  $\gm_0 uh \gm_0^{-1} \in P_2$
if and only if $\gm_0 u\gm_0^{-1} \in P_2$ and $\gm_0 h\gm_0^{-1} \in P_2.$ It follows that
\begin{equation}\label{alternate expression for E_wmu^(U,psiU)}
\begin{aligned}
E_{\gm_0}^{(U, \psi_U)}(h, f_{s, \chi})&
= \il_{U^{\gm_0}(F)\bs U(\A)}
\sum_{\gm \in (H \cap \gm_0^{-1} P_2 \gm_0)(F) \bs H(F)}
f_{s,\chi}(\gm_0 \gm u h) \psi_U(u) \, du\\
&=\il_{U^w(F)\bs U(\A)}
\sum_{\gm \in (H \cap \gm_0^{-1} P_2 \gm_0)(F) \bs H(F)}
f_{s,\chi}(w u\mu \gm h) [\mu\cdot\psi_U](u) \, du.
\end{aligned}
\end{equation}
where, $
U^w := U \cap w^{-1} P_2 w,$
and
$
[\mu \cdot \psi_U](u) := \psi_U( \mu^{-1} u \mu)$ for $\mu \in M_1(F), \; u \in U(\A).
$

\begin{lem}\label{vanishing conditions}
Take $w \in W$ left $M_2$ reduced,
 and $\mu \in M_1(F).$
\begin{enumerate}
\item If the character $\mu\cdot \psi_U$ is nontrivial on the
group $U^w(\A),$ then $E_{w\mu}^{(U, \psi_U)}=0.$
\item  Set $\gm_0 = w \mu.$
If the group $(H \cap \gm_0^{-1} P_2\gm)$
contains the unipotent radical of a parabolic subgroup
of $H,$ then $E_{\gm_0}^{(U, \psi_U)}$ is orthogonal
to cuspforms $\quo H \to \C.$
\item For any $\gm_0 \in E_8(F),$ if the function $E_{\gm_0}^{(U, \psi_U)}$
is zero (resp. orthogonal to cuspforms), then $E_{\gm_0'}^{(U, \psi_U)}$
is zero (resp. orthogonal to cuspforms)
for all  $\gm_0' \in P_2(F)\cdot \gm_0 \cdot U(F)H^{\pm}(F).$
\end{enumerate}
\end{lem}
\begin{rmk}  ``Orthogonal to cuspforms'' just means that the integral
\begin{equation}\label{Jgm0}
J_{\gm_0}(f_{s, \chi}, \varphi):=
\int_{\quo H} E_{\gm_0}^{(U, \psi_U)}(h, f_{s, \chi})\varphi(h)\,dh,
\end{equation} is zero for any $f_{s, \chi} \in \Ind_{P_1(\A)}^{E_8(\A)}\delta_{P_2}^s \chi$ and any cuspform $\varphi: \quo H \to \C.$  Note that \eqref{Jgm0}
is precisely the contribution of $E_{\gm_0}$ to
our global integral.
The function $E_{\gm_0}^{(U, \psi_U)}(f_{s, \chi})$
need not be $L^2,$ so strictly speaking there is no
inner product space here.  However, it is of uniformly moderate growth, so it follows from the decay properties of cuspforms that \eqref{Jgm0} is absolutely convergent.
\end{rmk}
\begin{proof}\begin{enumerate}
\item
The function
$g \mapsto f_{s, \chi}(w g)$
is left-invariant by $U^w(\A).$
So, \eqref{alternate expression for E_wmu^(U,psiU)}
can be written as a double integral
with the inner integration being $$\int_{\quo{U^w}}[\mu\cdot \psi_U](u)\, du.$$
\item  The group
$H \cap \gm_0^{-1} P_2 \gm_0$
normalizes $U$ and also the subgroup $U\cap
\gm_0^{-1} P_2 \gm_0.$  Moreover,
 $H(\A)$ stabilizes $\psi_U.$ The function
 $f_{s, \chi} $ is left-invariant, by the $\A$-points of any unipotent subgroup of $P_2.$
 Consequently, the function
$$h \mapsto \il_{U^w(F)\bs U(\A)}
f_{s,\chi}(\gm_0 uh) \psi_U(u)\, du$$
is left-invariant by the $\A$-points of
any unipotent subgroup
of $H \cap \gm_0^{-1} P_2 \gm_0.$
The second part follows.
\item  It follows from the definition
of $E_{\gm_0}$ that $E_{p \gm_0 h}= E_{\gm_0}$
for $p \in P_2(F)$ and $h \in H(F).$  For
$h^- \in H^\pm (F) \ssm H(F),$ one has
$E_{\gm_0 h-}(g, f_{s, \chi}) = E_{\gm_0}( h^-g, f_{s, \chi})$
for all $g \in E_8(\A).$
\end{enumerate}
\end{proof}
\begin{lem}\label{Criterion1:goesUp(wnu, psiRoots) has to be trivial}
Let
\begin{equation}\label{SuppPhipsi}
\Supp_{\Phi}(\psi_U) = \{11221111,11122111,12232210,11233210\}.
\end{equation}
Take $\gm_0 = \sg \delta$ as in \eqref{set of reps sg delta}.
If $\sg \alpha >0$ for some $\alpha \in \Supp_{\Phi}(\psi_U),$  then $E_{\gm_0}(f_{s, \chi}, \varphi) =0.$
\end{lem}
\begin{proof}
 If
$\alpha \in
\Supp_{\Phi}(\psi)$
the $\psi_U$ is nontrivial on $U_\alpha.$ The condition $\sg \alpha >0$ implies that $U_\alpha \subset U^\sg,$
and
ensures that $\psi_U$ is nontrivial on $U^\sg.$
What must be shown is that  $[\delta \cdot \psi_U]$ remains nontrivial on $U_\alpha(\A),$ for all $\delta\in N_\nu (F).$   This follows from the fact that $N_\nu$ is contained in the standard
maximal unipotent and no two elements of $\Supp_\Phi(\psi_U)$ differ by a positive root.
\end{proof}

\begin{prop}
\label{6576->25}
The set $\dot W(M_2, L_{4,7})$ as in  \eqref{set of reps sg delta}
has $6576$ elements, of which all but $25$ map at least one of the four roots listed in lemma
\ref{Criterion1:goesUp(wnu, psiRoots) has to be trivial}
to a positive root.
\end{prop}
\begin{proof}
One can check this using the computer package LiE \cite{L}.
\end{proof}
Let $S= \{ \sg \in \dot W(M_2, L_{4,7}):  \sg \alpha > 0 \qquad \forall \alpha \in \Supp_\Phi(\psi)\}.$
According to proposition \ref{6576->25},
$S$ has $25$ elements.
Now, for $\sg \in \dot W(M_2, L_{4,7})$ and
$\delta \in (N_\sg \cap M)(F),$ one has
$P_2(F) \sg \delta L_{4,7}(F) \subset
P_2(F) \sg \delta UH(F) \subset P_2(F) \sg \delta P_1(F).$
So, it makes sense to sort these $25$ elements
according to the image in $P_2(F) \sg \delta P_1(F).$
Another straightforward computer check using
\cite{L} yields the next lemma.  \begin{lem}
If $\sg \in S$ then $P_2
\sg P_1$ contains either   \begin{equation}\label{shortOne}w_\sht:=\shortOne\end{equation} or  \begin{equation}\label{longOne}w_\lng:=\longOne.\end{equation}
If $S_{\star} = \{ \sg \in S: w_\star \in P_2 \sg P_1 \}, \; (\star = \on{sht},\on{lng}),$ then $S_\sht$ has $9$
elements and $S_\lng$ has $16.$  The $9$ elements of $S_\sht$
are all in the same $P_2(F), H(F)$ double coset,
and the  shortest of them is $w_\sht w[56].$
\end{lem}
\begin{prop}
For each $\sg \in S_{\sht},$ the restriction of $\delta \cdot \psi_U$ to $U^\sg$ is trivial if and only if $\delta \in H(F).$
\end{prop}
\begin{proof}
One can check this on a case-by-case basis using the
program ``DCA3'' from the egut package \cite{egut}.
\end{proof}
\begin{cor}
If $\sg \in S_\sht$ then $E_{\sg\delta}^{(U, \psi_U)}$
is orthogonal to cuspforms.
\end{cor}
\begin{proof}
We use all three parts of lemma \ref{vanishing conditions}.
It is clear that $E_{w_\sht w[56]}^{(U, \psi_U)}$ is
orthogonal to cuspforms by part (2).  Hence the same
holds for any element of $P_2(F) w_\sht H(F)$
by part (3), and the proposition shows that $E_{\sg\delta}^{(U, \psi_U)}$ is identically zero the rest of the
time by part (1).
\end{proof}
\begin{lem}
Suppose that $\sg \in S_\lng$ and $w_\lng^{-1} \sg$ can be written as a word in the simple reflections without using $w[4].$  Then $E_{\sg \delta}^{(U, \psi_U)}$ is orthogonal to cuspforms for all $\delta \in N_\sg \cap M_1(F).$  The same is true with ``$4$''
replaced by ``$7$''.
\end{lem}
\begin{proof}  Set $\nu = w_\lng^{-1} \sg.$
Since $w_\lng$ is left $M_1$-reduced, it follows that
$\sg \alpha < 0 \iff \nu \alpha <0$
for $\alpha \in \Phi(M_1, T).$
From this it follows that $N_\sg \cap M_1 = N_\nu.$

Let $P_4^{M_1} := {M_1} \cap P_4,$ i.e., the maximal standard parabolic subgroup of ${M_1}$ obtained by intersecting ${M_1}$ with the standard maximal parabolic subgroup $P_4$ of $G.$  Thus, the only simple root $\alpha$ of ${M_1}$ such that $U_\alpha$ is contained in the unipotent radical of $P_4^{M_1}$
is $\alpha_4.$
Suppose that $\nu$ can be expressed as a word in the simple reflections without using $4.$
Then $\nu,$ is actually in the Levi $M_4^{M_1}$ of $P_4^{M_1},$ and, consequently, so is  $N_\nu.$  Clearly, the unipotent
radical $U_4^{M_1}$ of $P_4^{M_1}$ is contained in $Q_w,$ and hence
in $\delta^{-1} \nu^{-1} Q_w \nu \delta$ for any $\delta \in N_\nu.$  Since the intersection of $U_4^{M_1}$ with $H$ is the unipotent radical of a parabolic subgroup of $H.$  The lemma follows from lemma \ref{vanishing conditions}, part (2).   The previous argument also works if ``$4$'' is replaced by ``$7$'' throughout, which completes the proof.
\end{proof}
Inspecting the $16$ elements of $S_{\lng}$ yields
a reduction.
\begin{lem}
Let $S_{\lng}'= \{ \sg \in S_\lng\mid w_\lng^{-1}\sg
\tx{ contains a $4$ and a $7$ }
\}$ then $S'_{\lng}$ has $8$ elements.
\end{lem}

\begin{prop}
Let $w_0 = w_\lng \nu_0$ where
$\nu_0 =w[345678243546576].$
 For three of the eight elements $\sg\in S_{\lng}',$ the function $E_{\sg \delta}^{(U, \psi_U)}$ is
orthogonal to cuspforms for all $\delta \in N_\sg \cap M_1.$  The remaining five have the property that
$P_2 \sg (N_\sg \cap M)$ intersects $P_2 w_0 N_{w_0} H^\pm,$ and $\delta \cdot \psi_U$
is nontrivial on $U^\sg$
unless $\sg \delta \in P_2(F) w_0 N_{w_0}(F) H^{\pm}(F).$
\end{prop}
\begin{proof}
This can be checked on a case by case basis using
the egut program DCA3.
\end{proof}

The results of the previous section imply that
$E_{\gm_0}^{(U, \psi_U)}$ is orthogonal to
cuspforms whenever $P_2(F)\gm_0 H^{\pm}
\cap w_0 (N_{w_0}\cap M_1)(F)$ is empty.  In this section
we must study $E_{\gm_0}^{(U, \psi_U)}$
for $\gm_0 \in w_0 N_{w_0}(F).$
Recall that $w_0 = w_\lng \nu_0$ where
$\nu_0 =w[345678243546576].$ And that
$N_{w_0}\cap M_1$ can be described more
simply as $N_{\nu_0}.$

We have
$$\Phi(N_{\nu_0}, T) =\left\{\begin{matrix}00000100,00000110,00000111,00001100,00001110,00001111,00011100,00011110,\\00011111,00111100,00111110,00111111,01122210,01122211,01122221\end{matrix}\right\}.$$
Write $\delta \in N_{\nu_0}$ as
$\prod_{\alpha \in \Phi(N_{\nu_0}, T)} x_\alpha( \delta_\alpha)$ with the product taken in the order the roots are
listed above.
Since $H$ contains every element of $E_8$ of the form
$$\begin{aligned}
x_{00000001}(r) x_{01011100}(-r) x_{00111100}(r),\qquad
x_{00000011}(r) x_{00111110}(-r) x_{01011110}(r),\\
x_{01011111}(r) x_{01122210}(r) x_{00111111}(-r),\qquad
x_{01122211}(r),\qquad\text{ or }\qquad
x_{01122221}(r),\end{aligned}
$$
It follows that every element of $\sg N_{\nu_0}(F)$ lies in the
same $P_2(F),H(F)$-double coset as $\nu_0 \delta$ for
some
$\delta$
such that
\begin{equation}\label{e:conditions on delta in nu4 case part 1}
\delta_{00111100}=\delta_{00111110}=\delta_{011222210}
=\delta_{011222211}=\delta_{011222221}=0.
\end{equation}
For such $\delta,$ the condition $\delta \cdot \psi_U \big|_{U^\sg} \equiv 1$ implies
     $$
\delta_{00011111}=
\delta_{00001111}=
\delta_{00000110}=
\delta_{00000100}=\delta_{00111111}=0$$
\begin{equation}\begin{aligned}
\delta_{00000111} &=
 \delta_{00001100} \delta_{00011110}- \delta_{00001110} \delta_{00011100}
\end{aligned}\label{e:Conditions on delta in the nu4 case}
     \end{equation}
\begin{lem}  The set
$$
\left\{
\xd{00001100}\xd{00011100}\xd{00001110}\xd{00000111}\xd{00011110}
\right\}
$$
 is a four-dimensional abelian subgroup $D_0\subset N_\nu,$ which is normalized
by
\begin{equation}\label{e: def of m0}
M_0  := (T\cap H)\cdot SL_2^{\alpha_4} \cdot SL_2^{\alpha_7}
\subset H\cap \nu^{-1} Q_w \nu.
\end{equation}
     The
     subset consisting of elements which satisfy \eqref{e:Conditions on delta in the nu4 case} is a union of three orbits for  this
     action,
     represented by  the identity, $x_{00011110}(1)$
     and \\$x_{00011100}(1)x_{00001110}(1) x_{00000111}(-1).$ \label{Lemma:  Action of M0 on D0}
    \end{lem}
\begin{proof}First, $\nu \cdot \alpha_4 =  \alpha_7,$
while $\nu \cdot \alpha_7 = \alpha_4.$  Both lie in $\Phi(L_w, T),$
and this proves that $M_0 \subset H\cap \nu^{-1} Q_w \nu.$  Verifying that the set $D_0$ is indeed a subgroup
is as simple as checking that no two of the roots
\begin{equation}\label{e:d0roots}00001100,00011100,00001110,00000111,00011110
\end{equation}
sum to give another root of $E_8.$
It is obvious that $T$ normalizes $D_0.$  Proving that
$SL_2^{\alpha_4} \cdot SL_2^{\alpha_7}$ does as well is as simple as verifying that when either $\alpha_4$ or $\alpha_7$ is subtracted from a root listed in \eqref{e:d0roots}, the result is either not a root, or one of the other roots listed in \eqref{e:d0roots}.

The action of $M_0$ on $D_0$ can be identified with the action of $GL_2^2$ on $\G_a\times \operatorname{Mat}_2$ via \begin{equation}\label{M0D0action}(g_1, g_2) \cdot (x, X) =
(\det g_1 \det g_2^{-1} x, g_1 X g_2^{-1}),\end{equation}
in such a way that the subvariety defined by
\eqref{e:Conditions on delta in the nu4 case} corresponds to that defined by $x = -\det X.$
It's clear that
\begin{itemize}
\item this subset is preserved by the action \eqref{M0D0action},
\item it is the union of three orbits, namely, the trivial orbit,
$\{(0,0)\}, \{(0,X): X\ne 0, \det X = 0\}$ and
$\{ (-\det X, X) : \det X \ne 0\}.$
\item the elements given above do indeed represent these orbits.
\end{itemize}
\end{proof}
\begin{cor}
If $\gm_0 \notin P_2(F) w_0 z H^\pm(F),$ then
 $E_{\gm_0}^{(U, \psi_U)}$
is orthogonal to cuspforms.
\end{cor}
\begin{proof}
Indeed, one can check using LiE that
$w_0 \alpha_i >0$ for $i=2,3,4,5.$  Hence
$w_0^{-1} H w_0$ contains the full unipotent radical of the copy of $G_2$ generated by \eqref{firstg2}.

Let $G_1$ denote the subgroup of $E_8,$
isomorphic to $Spin_8,$
generated by $U_{\pm \alpha_i}$ for $i = 2,3,4,5.$
Consider the maximal parabolic subgroup of
$G_1$  whose  unipotent radical contains $U_{\alpha_4}.$  Let $V_4$ denote this
unipotent radical.
Then
$x_{00011110}(1)V_4x_{00011110}(-1)\subset
V_4 U_{01121110}U_{01122110},$ and
$w_0 \alpha >0$ for each $\alpha \in \{01121110,01122110\}\cup \Phi(V_4, T).$  Hence
$w_0 x_{00011110}(1)$ conjugates
$(V_4 \cap H)$ into $P_2.$
The result follows from lemma \ref{vanishing conditions}, part (2).
\end{proof}
This completes the proof that
the global integral \eqref{global1} is equal to
$J_{\gm_0}(f_{s, \chi}, \varphi_{\pi_1} \cdot \varphi_{\pi_2}),$ defined by \eqref{Jgm0}, for
$\gm_0 = w_0 z.$
\begin{rmk}
For each $\gm_0 \in E_8(F),$   $J_{\gm_0}$
is a bilinear form between
$\Ind_{P_2(\A)}^{E_8(\A)} \delta_P^s \chi$ and the space of cuspforms on $\quo H.$
Define
$\Supp(J)$ to be the set of $\gm_0 \in E_8(F)$ such that $J_{\gm_0},$ is nonzero.  Then we have shown
that $\Supp(J)$ is a union of $P_2(F), UH^{\pm}(F)$ double cosets, and that it vanishes off of a single $P_2(F), UH$-double coset.  It is therefore worth asking whether $P_2(F) w_0 z UH^\pm(F) = P_2(F) w_0 z UH(F),$ for if this is not so, then we have proved that $J_{\gm_0}$ vanishes identically.  Thankfully, the answer is yes.  Indeed, if we define $^t\!z= x_{-00011100}(1) x_{-00001110}(1) x_{00000111}(-1),$ then $z^{-1}{}^t\!zz^{-1}$ is a representative for the Weyl element $w[657486576].$  Recall that certain representatives for this Weyl element lie in $H^\pm(F)\ssm H(F).$  Moreover one can choose $t_0 \in T(F)$ such that
$z^{-1}{}^t\!zz^{-1}t_0 \in  H^\pm(F)\ssm H(F)$ and
$t_0^{-1} z^{-1} t_0 =z.$  As $w_0 {}^t\!zt_0 w_0 \in P_2(F),$ it follows that $z\cdot ( z^{-1}{}^t\!zz^{-1}t_0 )$
and $z$ are in the same $P_2(F), H(F)$ double coset.
\end{rmk}

Let $J_{\gm_0}$ be defined
as in \eqref{Jgm0}.  Then
$$
J_{\gm_0}(f_{s, \chi}, \varphi) =
\il_{(H \cap \gm_0^{-1} P_2 \gm_0)(F) \bs H(\A)}
\varphi(h)
\il_{U^{\gm_0}(F) \bs U(\A)}
f_{s, \chi}( \gm_0 u h)\psi_U(u)
\, du
\, dh.
$$
We have shown that the global integral \eqref{global1}
is equal to $J_{w_0z}( f_{s, \chi}, \varphi_{\pi_1} \varphi_{\pi_2}).$
\begin{lem}
Define $w_0, z$ and $U_0$ as in theorem \ref{th1}.
Then $$\il_{U^{w_0z}(F) \bs U(\A)}
f_{s, \chi}( w_0z u h)\psi_U(u)
\, du
= \il_{U_0(\A)} f_{s, \chi}( \gm_0 u h)\psi_U(u)
\, du.
$$
\end{lem}
\begin{proof}
Note that
$U^{w_0} = \prod_{\alpha \in \Phi(U,T): w_0 \alpha >0} U_\alpha$ while $U_0 = \prod_{\alpha \in \Phi(U,T): w_0 \alpha <0}U_\alpha.$  It follows that $U = U^{w_0} U_0.$
One calculates
that $\{ \alpha \in \Phi(U,T): w_0 \alpha >0\}$ equals
$$\{ (11110000);\ (11111000);\ (11121000);\ (11221000);\ (12232100);\
(12232110);\ (12232111)\}.$$

For each value of $\alpha,$
one easily checks that none of $\alpha - 00011100, \alpha-00001110, \alpha-00000111$ is a root.
It follows that  $z$ normalizes $U_0,$
and hence $U = U^{w_0z} U_0.$

Since$f_{s, \chi}(w_0 zuh)$ is left-invariant by $U^{w_0z}(\A),$ and since $\psi_U$ is trivial on $U^{w_0z}(\A),$ the result follows.
\end{proof}
\begin{lem}\label{H cap w0 z P2 is diagonal G2}
The group $H \cap(w_0z)^{-1} P_2 w_0z$ is the
diagonally embedded copy of $G_2,$ i.e., the
subgroup of $H$ generated by
$
x_{\pm 00010000}(r) x_{\pm 00000010}(r),$ and
$$x_{\pm 01000000}(r) x_{\pm00100000}(r) x_{\pm00001000}(-r) x_{\pm 00000001}(r) x_{00111100}(r) x_{01011100}(-r).
$$
\end{lem}
\begin{proof}
The group $M_1 \cap w_{\lng}^{-1} P_2 w_\lng$
is the maximal parabolic subgroup of $M_1$
whose unipotent radical contains $U_{\alpha_3}.$
Denote this group by $P_3^{M_1}.$  Then
$H \cap(w_0z)^{-1} P_2 w_0z = H \cap (\nu_0z)^{-1} P_3^{M_1} \nu_0z= \{ h \in H \mid \nu_0z h z^{-1} \nu_0^{-1}\in P_3^{M_1}.$
Now, $M_1$ is isogenous to $SO_{14}.$
The kernel of this isogeny is contained in the maximal torus, and hence in every parabolic subgroup, and
this means $\nu_0z h z^{-1} \nu_0^{-1}$ lies in $P_3^{M_1}$ if and only if its image in $SO_{14}$ lies in
the corresponding parabolic subgroup of $SO_{14}.$
This reduces the lemma to a straightforward matrix calculation.
 \end{proof}
 This completes the proof of theorem \ref{th1}.
 \end{proof}
\section{ The Unramified Computations}
\label{section: the unramified computations}
\subsection{Notation}
In this section  the unramified
local zeta integral corresponding to integral \eqref{global2} will be computed.
Therefore,
let $F$ now denote a nonarchimedean local field,
and $\pi$ an unramified representation
of $G_2(F).$  In this section we write $T_{E_8}$
for our fixed maximal torus of $E_8$ and use $T$
for a fixed maximal torus of $G_2.$
Let $f$ denote a section of the induced representation
$\Ind_{P_2(F)}^{E_8(F)} \delta_P^s,$ where $s\in \C$
with $\Re(s)$ large.
(We no longer need to consider characters
of the type $\delta_P^s\chi$:  if $\chi$ is unramified,
then it is equal to $\delta_P^\nu$ for some $\nu \in
\C$ and we may absorb $\nu$ into $s.$ )
Let $\omega_\pi$ denote
the normalized spherical function for $\pi.$
Let $w_0=w_\lng \nu_0$ where
$$w_\lng=w[2431542345654231435427654231435426543765428765423143542654376542
8765431]$$ and $\nu_0=w[345678245673456]$.
Let $U$ denote the unipotent
radical of the standard maximal parabolic subgroup
of $E_8$ with Levi isomorphic to the
product of $Spin_{14}$ and a one dimensional torus,
and let   $U_0$ be
the
$T_{E_8}$-stable subgroup of
 $U$ such that
$
\Phi(U,T_{E_8}) \ssm \Phi(U_0, T_{E_8})$ equals $$
\{ (11110000);\ (11111000);\ (11121000);\ (11221000);\ (12232100);\
(12232110);\ (12232111)\}.$$
This group can also be described as $U \cap w_0^{-1} \ol{U}_{\max} w_0,$ or $ \prod_{\alpha > 0 : \, w_0 \alpha < 0 } U_\alpha.$
Define a character $\psi_U$ of $U$ (which may then be restricted to $U_0$)  by the formula
$$\psi_U(u)=\psi_U(x_{11221111}(r_1)x_{11122111}(r_2)x_{12232210}(r_3)x_{11233210}(r_4)u')=
\psi(r_1+r_2+r_3+r_4),$$
for any $u'\in U$ which lies in the product of the groups
$U_\alpha$ with $\alpha \in \Phi(U,T_{E_8})$ and $\alpha \notin \{11221111, 11122111, 12232210, 11233210\}.$
Embed $G_2\times G_2$ into $E_8$ as the
identity component of the stabilizer of $\psi_U$ in
$M_1,$ as in section \ref{ss:FourierCoefficient}.
Finally, let  $z=x_{00011100}(1)x_{00001110}(1)x_{00000111}(-1).$

Then the local zeta integral is given by
\begin{equation}\label{unra1}
I(s, \pi):=
\int\limits_{G_2(F)}\int\limits_{U_0(F)}\omega_\pi(g)f(w_0zu(1,g),s)\psi_U(u)\,du\,dg
\end{equation}
Throughout this section, we shall often abuse notation and denote the $F$-points of an algebraic group $H$
by ``$H$,'' suppressing the ubiquitous ``$(F)$'' 's.
We denote the long and short simple
roots of $G_2$ by $\beta_\lng$ and $\beta_\sht$
respectively.

We embed $G_2$ into $SO_8$ in such a way
that $$T=\{t=\text{diag}(t_1t_2^2,t_1t_2,t_2,1,1,t_2^{-1},t_1^{-1}t_2^{-1},t_1^{-1}t_2^{-2})\}.$$
This puts coordinates $t_1,t_2$ on $T.$  Another
way to define the same coordinates
is that $t_1 = \beta_\lng(t)$ and $t_2 = \beta_\sht(t).$

We also write $h:T \hookrightarrow E_8$
for the embedding
$h(\beta_\sht^\vee(a_1) \beta_\lng^\vee(a_2))
= \alpha_2^\vee\alpha_3^\vee \alpha_5^\vee(a_1)
\alpha_4^\vee(a_2)$
determined by our chosen embedding
$G_2 \hookrightarrow E_8.$  Composing
with the projection
$M_1\to SO_{14}$ gives the embedding
$$T \hookrightarrow SO_{14}, \qquad t\mapsto \begin{pmatrix} I_3&&\\ &t&\\ &&I_3\end{pmatrix},$$
which we also denote $h.$  (Here $T$ is identified
with a subgroup of $SO_8.$)

We also write $K$ for $G_2(\f o).$

\begin{rmk}
We saw in lemma \ref{H cap w0 z P2 is diagonal G2}
that the $H\cap (w_0z)^{-1} P_2
w_0 z$ is the diagonally embedded copy of $G_2$
inside of $H=G_2\times G_2.$  Hence $\{(1,g): g \in G_2\}\subset H$ maps isomorphically onto $(H\cap (w_0z)^{-1} P_2
w_0 z)\bs H.$  In fact we could take the integral
in \eqref{unra1} over any embedded copy of $G_2$
with this property.
\end{rmk}
\subsection{Main Local Result}
\label{ss: main local result}
\begin{thm}\label{main local theorem}
For $\Re(s)$ sufficiently large,
$$
I(s,\pi) = \frac{L(s, \pi, \St)}{N(s)},
$$
where $L(s, \pi, \St)$ is the local $L$ function
attached to $\pi$ and the $7$ dimensional
``standard'' representation of $G_2(\C),$
and  $N(s)$ is the normalizing factor of the
Eisenstein series, given explicitly (see section \ref{section:  normalizing factor}) by
$$N(s)=\zeta(17s)\prod_{i=2}^6\zeta(17s-i)\prod_{i=5}^8\zeta(34s-2i)\zeta(51s-21).
$$
\end{thm}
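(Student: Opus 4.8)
The plan is to unfold the $G_2$-integral \eqref{unra1} using the Iwasawa decomposition $G_2(F) = T\,(U_{\max}\cap G_2)\,K$ and MacDonald's formula for the spherical function $\omega_\pi$, reducing everything to an explicit sum over the torus $T\cong \{(t_1,t_2)\}$. First I would note that $f(w_0zu(1,g),s)$, as a section of $\Ind_{P_2}^{E_8}\delta_P^s$, is left-invariant under $U^{w_0z}(F)$ and, by the last lemma of section 2, the integral over $U^{w_0z}(F)\bs U(\A)$ collapses to the integral over $U_0(F)$ (or $U_0(\A)$ in the global case); locally one integrates over $U_0(F)$. The key geometric input is the stabilizer computation of Lemma \ref{H cap w0 z P2 is diagonal G2}: the point $w_0z$ has left-$P_2$-stabilizer inside $H$ equal to the diagonal $G_2$, so the ``$(1,g)$'' slice is a genuine set of representatives. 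Then $f(w_0zu(1,tk),s)$ for $k\in K$ is (by $K$-invariance of the unramified section, after moving $k$ through) essentially $f(w_0zu(1,t),s)$, and the $u$-integral combined with the character $\psi_U$ produces a Jacquet-type integral which one evaluates by tracking how $\Ad(t)$ scales each root coordinate $u_\alpha$ and each of the four ``support'' roots in $\Supp_\Phi(\psi_U)$.

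Next I would compute $f(w_0zu(1,t),s)$ explicitly. Writing $w_0zu(1,t) = p(u,t)\,k(u,t)$ in Iwasawa form relative to $P_2$, one needs the $M_2$-part, on which $f$ is $\delta_P^s$; since $\delta_{P_2} = (\det^{1/2})^{17}$, this amounts to identifying the relevant character of $T_{E_8}$ evaluated at $h(t)$, i.e. at $\alpha_2^\vee\alpha_3^\vee\alpha_5^\vee(t_1)\alpha_4^\vee(t_2)$, composed with the projection to $SO_{14}$. The $u$-integration then becomes a product (over the finitely many roots of $U_0$ not killed by the support condition, organized by the orbit structure from Lemma \ref{Lemma: Action of M0 on D0} with the three orbit representatives $1$, $x_{00011110}(1)$, $z$) of local integrals that evaluate to geometric-series factors of the form $(1-q^{-as+b})^{-1}$ times possibly a contribution from the non-open orbits. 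This is where sections \ref{section: normalizing factor} and \ref{s: calculation of I(s,t)} — the computation of $I(s,t)$ referenced in the introduction — get invoked: they supply the closed form of the $u$-integral as a function of $t$ and $s$, call it $I(s,t)$.

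Then comes the spherical-function step: plug MacDonald's formula
$$
\omega_\pi(t) = \frac{1}{Q}\,\delta_B(t)^{1/2}\sum_{w\in W(G_2)} c(w\cdot\lambda)\, (w\lambda)(t)
$$
(with $\lambda$ the Satake parameter of $\pi$ and $c$ the usual $c$-function rational in $q^{-\langle\lambda,\alpha^\vee\rangle}$) into
$$
I(s,\pi) = \int_{T} \omega_\pi(t)\, I(s,t)\, \delta_{B}^{-1}(t)\,d^\times t,
$$
expand, and integrate term by term over $T\cong (F^\times)^2$. Each term produces a product of $\sum_{n\ge 0} q^{-n(\cdots)}$-type series in $t_1,t_2$, and the upshot is a sum over $W(G_2)$ of products of geometric factors. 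The final identity
$$
I(s,\pi) = \frac{L(s,\pi,\St)}{N(s)}
$$
then follows by recognizing the resulting $W(G_2)$-sum, after clearing the $c$-function denominators, as the Weyl-character/Casselman–Shalika-type expansion of $L(s,\pi,\St) = \prod_{\mu}(1 - q^{-17s}\mu(\lambda))^{-1}$ over the $7$ weights $\mu$ of the standard representation of $G_2(\C)$, all divided by the explicit product of zeta factors making up $N(s)$.

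The main obstacle, as the authors themselves flag, is the sheer bookkeeping: identifying the precise character of $T_{E_8}$ by which $\delta_{P_2}^s$ acts at $h(t)z$ after the Iwasawa decomposition, getting all root-scaling exponents right so that the $u$-integral $I(s,t)$ comes out with the correct powers of $q$ and the correct $t$-dependence (including the contributions of the two degenerate $M_0$-orbits, which could a priori spoil the clean answer), and finally matching the messy $W(G_2)$-sum against the standard $L$-factor — this last matching is the classical ``MacDonald's formula'' endgame but it is delicate precisely because $G_2$'s short and long roots scale differently and the seventeenth powers must line up. Verifying the Iwasawa/orbit data is exactly what \cite{L} and \cite{egut} were used for, and where I would rely on the earlier propositions rather than redo the $E_8$ root combinatorics by hand.
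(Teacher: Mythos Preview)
Your overall strategy matches the paper's: reduce $I(s,\pi)$ to an integral over the torus using bi-$K$-invariance of $\omega_\pi$, compute the resulting ``section integral'' $I(s,t)$ explicitly, plug in MacDonald's formula, and match against $L(s,\pi,\St)/N(s)$. But several of your details are off, and one step has a genuine gap.

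First, you conflate the Iwasawa and Cartan decompositions. The function $g\mapsto f(w_0zu(1,g),s)$ is \emph{not} left-invariant under $U_{\max}\cap G_2$, so writing $G_2(F)=T(U_{\max}\cap G_2)K$ and then reducing to $\int_T(\cdots)\delta_B^{-1}(t)\,d^\times t$ is wrong. The paper uses the Cartan decomposition: since $\omega_\pi$ is bi-$K$-invariant, one gets $I(s,\pi)=\int_{T^+}\omega_\pi(t)\,\delta(t)\,|t_1t_2^2|^{17s-5}I(s,t)\,dt$ where $\delta(t)=\mathrm{vol}(KtK)$, not $\delta_B^{-1}(t)$. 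The factor $|t_1t_2^2|^{17s-5}$ arises from conjugating $h(t)$ across $u$ and then to the left of $w_0$; this is where the $\delta_{P_2}^s$-character enters, not via an Iwasawa factorization of $w_0zu(1,t)$.

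Second, the $M_0$-orbit material (the three representatives $1$, $x_{00011110}(1)$, $z$) belongs to the \emph{global} unfolding of the Eisenstein series and plays no role in the local computation of $I(s,t)$. The local calculation of $I(s,t)$ proceeds instead by successive Iwasawa decompositions in root subgroups of $U_0'$, reducing eventually to an explicit $SL_5$ period; this is the content of section~\ref{s: calculation of I(s,t)}.

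Third, and most seriously, the endgame is not a matter of ``recognizing'' a Casselman--Shalika expansion. After inserting the explicit closed form of $I(s,t)$ and MacDonald's formula (in the form involving the polynomials $P_\nu$ and the alternants $A_\lambda$), one is left with a double sum over the dominant cone whose equality with $z_0(x,q)\sum_{r\ge 0}\chi_{(r,0)}(\tau)(xq^8)^r$ is a nontrivial combinatorial identity. The paper proves it (Theorem~\ref{thm:  sum p(n,m) I_0 = zetas * L }) by showing the summand depends on $(n,m)$ only through finitely many linearly independent exponential functions once $n\ge 5$ or $m\ge 3$, thereby reducing to a finite check, and then invokes Brion's theorem on $\mathrm{Sym}^\bullet(\St)$ to identify $\sum_r\chi_{(r,0)}(\tau)(xq^8)^r$ with $(1-x^2q^{16})L(s,\pi,\St)$. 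Your proposal does not supply any mechanism for this step.
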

\begin{proof}
The proof occupies the rest of section \ref{section: the unramified computations}, and comes
in several steps.
Write $T$
for the maximal torus of $G_2,$
and write $T^+$ for the subset
$
\{ t \in T : |\beta_\lng(t)|, |\beta_\sht(t) | \le 1\}.$
As a subset of $SO_8,$ this is
$$\{\text{diag}(t_1t_2^2,t_1t_2,t_2,1,1,t_2^{-1},t_1^{-1}t_2^{-1},t_1^{-1}t_2^{-2})\in T: |t_1|, |t_2| \le 1\}.$$
For the rest of the section, we define
$t_1:= \beta_\lng(t)$ and $t_2:= \beta_\sht(t)$
as coordinates on the torus $T.$
Define $$\psi_{U,t}\left(x_{10111111}(r_1)x_{12232210}(r_2)x_{12232111}(r_3)
x_{11233210}(r_4)x_{11232211}(r_5)x_{11222221}(r_6)u'\right)=$$
$$\psi\left(\sum_{i=1}^4 r_i+t_2r_5+t_1t_2r_6\right),$$
$U_0':=\nu_0U_0\nu_0^{-1} = U \cap w_0^{-1} \ol{U}_{\max} w_0,$ and
\begin{equation}\label{equation:  def of I(s,t)}
I(s,t):=\int_{U_0'}f(w_0
u,s)\psi_{U,t}(u)\,du.
\end{equation}
Then we prove in section
\ref{ss: transformation of the integral}
that
\begin{equation}\label{I(s,pi) = int omega I(s,t) ...}
I(s, \pi) =\int_{T^+}\omega_\pi(t)\delta(t) |t_1t_2^2|^{17s-5}
I(s,t)
\,dt,
\end{equation}
where $\delta(t)$ is the measure
of the double coset $KtK.$

It's value is given
by
$\delta_B(t)^{-1} Q/Q_t,$
where
$$
Q = \frac{(1-q^{-2})(1-q^{-6})}{(1-q^{-1})^2}
= (1+2q^{-1}+2q^{-2}+2q^{-3}+2q^{-4}+2q^{-5}+q^{-6}),$$
$$Q_t :=
\begin{cases}Q, & |t_1|= |t_2|=1,\\
1+q^{-1}, & 0=\max(|t_1|, |t_2|)
>\min(|t_1|, |t_2|),\\
1& 0>\max(|t_1|, |t_2|).
\end{cases}
$$
This is proved in section
 3.2 of \cite{MacDonald}.  See also \cite{CasselmanNotesOnMacDonald}.

Every term in the integrand of \eqref{I(s,pi) = int omega I(s,t) ...} depends only on the $\f p$-adic
valuations of $t_1$
and $t_2$ of $T.$
To be precise, suppose that
$v(t_2)=m$ and $v(t_1)=n.$
Here $v$ is the $\f p$-adic valuation.
Then
$\delta_B(t) = q^{-6n-10m}.$
Define $Q_{(m,n)} = Q_t$ for $t \in T$
with $t_1= p^n$ and $t_2 = p^m.$

Now, set $x = q^{-17s},$ and define
$$Z(x,q) = (1-x)(1-xq^2)(1-xq^3)(1-xq^4)(1-x^2q^{10})(1-x^2 q^{12})$$ and
$$I_0(n,m; x,q) :=
(1-xq^6)(1-x^3q^{21})-(1-xq^5)(1-xq^6)(xq^8)^{m+1} - (1-xq^8)(1-xq^5)(xq^8)^m(xq^7)^{n+1}.
$$
Then theorem \ref{thm:  final formula for I(s,t)}
states that
$$I(s,t)=\frac{Z(x,q)I_0(n,m; x,q)}{(1-xq^7)(1-xq^8)}.$$
Moreover,
$|t_1t_2^2|^{17s-5}= (xq^5)^{2m+n}.$
This leaves $\omega_\pi(t).$  To write
a formula for $\omega_\pi(t),$ it is convenient
to identify the pair $(m,n)$ with the
weight $m\varpi_1+n\varpi_2$ in the weight lattice $\Lambda$   of $^LG_2^0 = G_2(\C)$.
This is compatible with
the natural identification
of $\Lambda$ with  $T/T(\f o)$ which is built into
the definition of the $L$-group.  Let $\tau$ be an element of the $L$-group associated
to the representation $\pi$ (this determines $\tau$ up
to the action of $W$ which is enough).
We write $\lambda$
additively, and therefore denote the
value of $\lambda \in \Lambda$ at $\tau$ by $\tau^{\lambda}$
as opposed to $\lambda(\tau).$
 Define $S_0$ to be the finite
set
of weights of $^LG^0$ which have at least one
expression as a sum of distinct positive roots,
and define polynomials $P_\nu,$ ($\nu \in S_0$)
by the condition
$$\left(\prod_{\alpha >0} 1-q^{-1} \tau^{-\alpha}\right)
=\sum_{\nu \in S_0}P_\nu(q^{-1})\tau^{-\nu}.$$
Then
$$
\omega_\pi(t) =
\frac{1}Q q^{-5m-3n}
\sum_{\nu \in S_0} P_\nu(q^{-1})
\frac{A_{\varpi+\rho-\nu}(\tau)}{A_\rho(\tau)},
\qquad \text{where} \quad
A_\lambda(\tau) = \sum_{w\in W}(-1)^{\ell(w)} \tau^{w\lambda}, \quad( \lambda \in \Lambda).
$$
Here $\ell$ denotes the length function on $W$
and $Q$ is defined as in the formula
for the measure of $KtK$ given above.
It is not difficult to derive
this expression for $\omega_\pi(t)$ from
the one given in \cite{Cartier}.

Plugging all of this into
\ref{I(s,pi) = int omega I(s,t) ...}
yields
$$
I(s,\pi)
=
\frac{Z(x,q)}{(1-xq^7)(1-xq^8)}
\sum_{m,n=0}^\infty\frac{x^{2m+n} q^{15m+8n}I_0(m,n; x,q)}{Q_{(m,n)}}
\sum_{\nu \in S_0} P_\nu(q^{-1})
\frac{A_{(m,n)-\nu+\rho}(\tau)}
{A_{\rho}(\tau)}.
$$
The inner summation must be calculated.  This
is accomplished in theorem \ref{thm:  sum p(n,m) I_0 = zetas * L }, which states that
\begin{equation}\label{sum with I0 = L fcn times zetas}
\sum_{m,n=0}^\infty\frac{x^{2m+n} q^{15m+8n}I_0(m,n; x,q)}{Q_{(m,n)}}
\sum_{\nu \in S_0} P_\nu(q^{-1})
\frac{A_{(m,n)-\nu+\rho}(\tau)}
{A_{\rho}(\tau)}
\end{equation}
$$
=(1-xq^5)(1-xq^6)(1-xq^7)(1-xq^8)(1-x^2q^{14})(1-x^3q^{21})\sum_{r=0}^\infty
\chi_{(r,0)}(\tau)x^rq^{8r},
$$
where $\chi_\lambda$ is the character
of the irreducible finite dimensional
representation of $G_2(\C)$ having highest
weight $\lambda.$
The decomposition of the symmetric
algebra of $\St$ is described by the
theorem of Brion \cite{Br}, and this
description implies that
$$L(s, \pi, \St) = (1-x^2 q^{16})^{-1} \sum_{r=0}^\infty
\chi_{(r,0)}(\tau)x^rq^{8r}.$$
Plugging this in, and noting that
$(1-xq^5)(1-xq^6)(1-x^2q^{14})(1-x^2q^{16})(1-x^3q^{21})Z(x,q)
$
is precisely $N(s)$ yields
 theorem \ref{main local theorem}.
\end{proof}

\subsection{Transformation of the integral}
\label{ss: transformation of the integral}
\begin{prop}
Let $I(s, \pi)$ be defined by \eqref{unra1},
and $I(s,t)$ by \eqref{equation:  def of I(s,t)}.
Then
$$
I(s, \pi) =\int_{T^+}\omega_\pi(t)\delta(t) |t_1t_2^2|^{17s-5}
I(s,t)
\,dt.
$$
\end{prop}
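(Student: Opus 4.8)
The plan is to pass from the $G_2(F)$-integral in \eqref{unra1} to an integral over $T^{+}$ by means of the Cartan decomposition $G_2(F)=\coprod_{t\in T^{+}}KtK$, using that $\omega_\pi$ is bi-$K$-invariant. Write
$$
F(g):=\int_{U_0(F)}f(w_0zu(1,g),s)\,\psi_U(u)\,du ,
$$
so that $I(s,\pi)=\int_{G_2(F)}\omega_\pi(g)F(g)\,dg$. First I would note that $F$ is right $K$-invariant: the embedding $G_2\hookrightarrow E_8$ is through one-parameter subgroups with integral structure constants, whence $(1,k)\in E_8(\o)$ for $k\in K=G_2(\o)$, and the unramified section $f$ is right $E_8(\o)$-invariant. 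Next I would argue that $F$ is also left $K$-invariant: for $k\in K$ write $u(1,k)=(1,k)\bigl((1,k)^{-1}u(1,k)\bigr)$; since $H=G_2\times G_2$ normalizes $U$ and stabilizes $\psi_U$ and $G_2(\o)$ acts on $U_0(F)$ preserving Haar measure, substituting $u\mapsto(1,k)u(1,k)^{-1}$ (after checking on the explicit realization that this carries $U_0(F)$ onto itself) reduces matters to absorbing $w_0z(1,k)$ on the left of $f$, which follows from the identification of $H\cap(w_0z)^{-1}P_2w_0z$ in Lemma \ref{H cap w0 z P2 is diagonal G2} together with the positivity facts $w_0\alpha_i>0$ established in Section \ref{section:  the global integral}. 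Granting that $F$ is bi-$K$-invariant, the Cartan decomposition yields
$$
I(s,\pi)=\sum_{t\in T^{+}}\omega_\pi(t)\,\delta(t)\,F(t),\qquad \delta(t)=\operatorname{vol}(KtK),
$$
and since integration over $T^{+}$ against Haar measure is precisely this sum over $T^{+}/T(\o)$, it remains only to identify $F(t)$ with $|t_1t_2^{2}|^{\,17s-5}\,I(s,t)$.

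For that identification, fix $t\in T^{+}$ and rewrite the argument $w_0zu(1,t)$ of $f$ inside $F(t)$ by a sequence of conjugations together with a change of variables in the unipotent integration. Here one uses that $z$ normalizes $U_0$ (as shown in Section \ref{section:  the global integral}), that $M_1$ normalizes $U$, that the Weyl representative $\nu_0$ — with $w_0=w_\lng\nu_0$ — and the element $z$ both lie in $E_8(\o)$, and that $(1,t)\in T_{E_8}$. Moving the torus element $(1,t)$ to the far left turns it into $w_0(1,t)w_0^{-1}\in T_{E_8}\subset M_2\subset P_2$, so that $f$ acquires the scalar $\delta_{P_2}^{s}\bigl(w_0(1,t)w_0^{-1}\bigr)$; using $\delta_{P_2}=(\det^{\frac12})^{17}$ and computing $\det^{\frac12}$ of this torus element in the coordinates $t_1=\beta_\lng(t),\ t_2=\beta_\sht(t)$ shows this scalar equals $|t_1t_2^{2}|^{\,17s}$. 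The factors $z$ and $\nu_0$ get absorbed via their membership in $E_8(\o)$ and the left-invariance of $f$ under unipotent radicals inside $P_2$, and the unipotent integration is brought to the group $U_0'=\nu_0U_0\nu_0^{-1}$. The change of variables contributes the Jacobian $|t_1t_2^{2}|^{-5}$, and tracking the conjugations on $\psi_U$ carries it to exactly $\psi_{U,t}$: the twist by the torus element $(1,t)$ supplies the weights $t_2$ and $t_1t_2$ on the roots $11232211$ and $11222221$, while the twist by the unipotent $z$ accounts for the shape of its support. Assembling the pieces gives $F(t)=|t_1t_2^{2}|^{\,17s-5}I(s,t)$, which is the proposition.

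The step I expect to be the main obstacle is this last one: the bookkeeping on the $E_8$ root system confirming that the composite conjugation carries $\psi_U$ to precisely $\psi_{U,t}$ — in particular that the four ``untwisted'' support roots acquire trivial weight while $11232211$ and $11222221$ acquire $t_2$ and $t_1t_2$ — and that $\det^{\frac12}\bigl(w_0(1,t)w_0^{-1}\bigr)=t_1t_2^{2}$ up to a unit and the $U_0$-conjugation Jacobian is $|t_1t_2^{2}|^{-5}$; these are exactly the kinds of computations carried out with LiE. A secondary point requiring (easy but not purely formal) care is the left $K$-invariance of $F$, on which the reduction to $T^{+}$ rests; the subtlety there is the role of $z$, which does not lie in $H$ and so need not commute past $\psi_U$.
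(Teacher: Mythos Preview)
Your approach is essentially the paper's: Cartan decomposition via the bi-$K$-invariance of $\omega_\pi$, followed by conjugating the torus element across $u$ (picking up the Jacobian $|t_1t_2^2|^{-5}$), replacing $z$ by $z(t)=h(t)^{-1}zh(t)\in E_8(\f o)$, conjugating $\nu_0$ to the right to land on $U_0'$, and reading off $\delta_{P_2}^s(w_0h(t)w_0^{-1})=|t_1t_2^2|^{17s}$ and the twisted character $\psi_{U,t}$. The paper in fact carries this out with the \emph{first} $G_2$-factor via the embedding $h$ (cf.\ the Remark preceding the proposition), not the second factor $(1,g)$ that appears in \eqref{unra1}; this is harmless by the symmetry noted in Section~\ref{ss:FourierCoefficient}, but it does mean the positivity facts ``$w_0\alpha_i>0$ for $i=2,3,4,5$'' you invoke pertain to the other copy.

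There is, however, a genuine gap in your left $K$-invariance argument. You reduce to ``absorbing $w_0z(1,k)$ on the left of $f$'' and appeal to Lemma~\ref{H cap w0 z P2 is diagonal G2}. But that lemma identifies $H\cap (w_0z)^{-1}P_2w_0z$ with the \emph{diagonal} $G_2$, and $(1,k)$ does not lie there, so $w_0z(1,k)(w_0z)^{-1}$ is generally \emph{not} in $P_2$. The fix is to factor $(1,k)=(k,k)\cdot(k^{-1},1)$: the piece $(k^{-1},1)$ commutes past $(1,g)$ and is absorbed by right $E_8(\f o)$-invariance of $f$, while $(k,k)$ lies in the diagonal and can be moved left using Lemma~\ref{H cap w0 z P2 is diagonal G2}. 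For the accompanying change of variables you should also view the $U_0$-integral as an integral over $U^{w_0}\backslash U$ (as in the global section): the diagonal $G_2$ normalizes both $U$ and $U^{w_0z}=U^{w_0}$, so acts on the quotient, whereas it need not literally preserve the section $U_0$. The paper glosses over this point entirely, simply asserting the reduction to $T^+$ ``using the bi-$K$ invariant property of $\omega_\pi$''; so you are right that it requires care, but the care needed is slightly different from what you sketched.
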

\begin{proof}
Using the bi-K invariant property of $\omega_\pi$, integral
\eqref{unra1} is equal to
\begin{equation}\label{unra3}
\int\limits_{T^+}\int\limits_{U_0}\omega_\pi(t)f(w_0zuh(t),s)\psi_U(u)\delta(t)\,du\,dt,
\end{equation}
where $\delta(t)$ denotes the measure of the double
coset $KtK.$

Conjugate $h(t)$ across $u.$  It normalizes
$U_0$ preserving $\psi_U,$ but
the change of variables $u \mapsto h(t) u h(t)^{-1}$
produces a factor of $$
\left|t^{\sum_{\alpha \in \Phi(U_0,T_{E_8})}\alpha }\right| = |t_1t_2^2|^{-5}.$$
from the measure.
(Here $t_1 = \beta_\lng(t)$ and $t_2 = \beta_\sht(t).$)

Next define
$z(t)= h(t)^{-1} z h(t).$  Then $z(t) = x_{00011100}(t_2)x_{00001110}(t_1t_2)x_{00000111}(t_1t_2^2)$.
This calculation
can be done in LiE or with matrices, since
the projection $M_1 \to SO_{14}$
restricts to an isomorphism on
$G_2 \cdot U.$
The image of $t$ was described above,
and the image of $z$ is
the
unipotent matrix  $I_{14}+e_{1,4}'+e_{2,5}'+e_{3,6}'$ in $SO_{14}$. Now
conjugate $z$ across $u_0$ to obtain that
integral \eqref{unra3} is equal to
\begin{equation}\label{unra31}
\int\limits_{T^+}\int\limits_{U_0}\omega_\pi(t)f(w_0uh(t)z(t),s)[z(t)\cdot \psi_U](u)\delta(t)dudt
\end{equation}
Notice that $z(t)$ is in the maximal compact and hence can be
ignored.

Write $w_0 = w_\lng\nu_0$ as in Theorem \ref{th1}.
We conjugate $h(t)$ to the left and then $\nu_0$ to the
right, and we obtain
\begin{equation}\label{unra32}
\int\limits_{T^+}\int\limits_{U_0'}\omega_\pi(t)f(w_\lng
u,s)[\nu_0 z(t)\cdot \psi_{U}](u)\delta(t)
|t_1t_2^2|^{-5} \delta_{P_2}^s(w_0 h(t) w_0^{-1})
\,du\,dt.
\end{equation}
 The character $\nu_0z(t) \cdot \psi_U(u)$ is precisely
$\psi_{U,t},$ and $\delta_{P_2}^s(w_0 h(t) w_0^{-1})
= |t_1t_2^2|^{17s}.$
This gives the result.
\end{proof}

We remark that  $U_0'$ may be defined as
$\nu_0 U_0 \nu_0^{-1},$  as $U \cap w_0^{-1} \ol{U}_{\max}$, or  as the
$T_{E_8}$-stable subgroup of $U$ such
that $\Phi(U,T_{E_8})\ssm \Phi(U_0', T_{E_8})$ is the
set
$$\{(12343210);\ (12343211);\ (12343221);\ (12343321);\ (12344321);\
(12354321);\ (13354321)\}$$ Thus $\text{dim}\ U=71.$

\subsection{Proof of Main Unramified Identity}
In this section we prove
\eqref{sum with I0 = L fcn times zetas}.
Set
$$I_0(n,m; x,q)=1-xq^6-x^3q^{21}+x^4q^{27}-x^{m+1}q^{8(m+1)}+x^{m+2}q^{8m+14}-x^{n+m+1}q^{7(n+1)+8m}$$
$$+x^{n+m+2}q^{7n+8m+12}+x^{n+m+2}q^{7n+8m+15}
-x^{n+m+3}q^{7n+8m+20}
+x^{m+2}q^{8m+13}-x^{m+3}q^{8m+19}
$$
$$
= (1-xq^6)(1-x^3q^{21}) - (xq^8)^{m+1}
(1-xq^5)(1-xq^6) -(xq^7)^{n+1}(xq^8)^m (1-xq^5)(1-xq^8)
$$
Identify $\Z^2$ with the weight lattice of
$G_2(\C)$ via the mapping $(n,m) \mapsto n \varpi_1 + m \varpi_2,$
and regard $I_0$ as a function
defined on the weight lattice as well.

Write $(n,m)$ for the trace of the semisimple
conjugacy class
in $G_2(\C)$
attached to $\pi,$ acting on the irreducible finite
dimensional
representation of $G_2(\C)$ with highest
weight $n\varpi_1+m\varpi_2.$  Here $\varpi_1$ and
$\varpi_2$ are the fundamental weights.

Define $S_0$ and $P_\nu$ as in the previous section
and set
$$Q_{\varpi} = \begin{cases}
Q, & \varpi = 0 \\
1+q^{-1}, & \varpi = n\varpi_1 \text{ or } n \varpi_2,\\
1, & \text{ otherwise,}
\end{cases}$$
for $\varpi$ a dominant weight of $G_2(\C).$

Define
$$p(\varpi) =
\frac{1}{Q_{\varpi}}
\sum_{\nu \in S_0} P_\nu(q^{-1})
\frac{\varpi+\rho-\nu}{|\varpi+\rho-\nu|} \chi_{|\varpi+\rho-\nu|}(\tau),
$$
and $p(n,m) = p( n\varpi_1+m\varpi_2).$

Define
$$z_0(x,q)=(1-xq^5)(1-xq^6)(1-xq^7)(1-xq^8)(1-x^2q^{14})(1-x^3q^{21}).$$

\begin{thm}\label{thm:  sum p(n,m) I_0 = zetas * L }
We have
\begin{equation}\label{check3}
\sum_{n,m=0}^\infty p(n,m)I_0(n,m; x,q)x^{n+2m}q^{8n+15m}=z_0(x,q)\sum_{r=0}^\infty
(r,0)x^rq^{8r}
\end{equation}\end{thm}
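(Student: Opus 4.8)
The plan is to exploit the factorization
$$I_0(n,m;x,q)=A-B(xq^8)^{m+1}-C(xq^7)^{n+1}(xq^8)^m,\qquad A:=(1-xq^6)(1-x^3q^{21}),\ \ B:=(1-xq^5)(1-xq^6),\ \ C:=(1-xq^5)(1-xq^8),$$
which splits the left side of \eqref{check3} into three pieces. Introducing the two-parameter ``twisted Macdonald series'' $F(a,b):=\sum_{n,m\ge0}p(n,m)\,a^nb^m$ (absolutely convergent for $\Re(s)$ large) and using $x^{n+2m}q^{8n+15m}=(xq^8)^n(x^2q^{15})^m,$ regrouping the three terms gives
$$\sum_{n,m\ge0}p(n,m)I_0(n,m;x,q)x^{n+2m}q^{8n+15m}=A\,F(xq^8,x^2q^{15})-B(xq^8)F(xq^8,x^3q^{23})-C(xq^7)F(x^2q^{15},x^3q^{23}),$$
while the right side of \eqref{check3} is $z_0(x,q)$ times the ray series $R(t):=\sum_{r\ge0}\chi_{r\varpi_1}(\tau)t^r$ evaluated at $t=xq^8.$ One checks at once that $z_0(x,q)$ is a common multiple of $A,B,C,$ which is a good omen. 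So it suffices to evaluate $F(a,b)$ and $R(t)$ in closed form and then verify the resulting algebraic identity.

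To evaluate $F(a,b)$ I would first put Macdonald's formula for $p(n,m)$ into Weyl-group form. Using $\sum_{\nu\in S_0}P_\nu(q^{-1})\tau^{-\nu}=\prod_{\alpha>0}(1-q^{-1}\tau^{-\alpha})$ and $A_{w\lambda}=(-1)^{\ell(w)}A_\lambda,$ one obtains
$$Q_{(n,m)}\,p(n,m)=\frac1{A_\rho(\tau)}\sum_{w\in W}(-1)^{\ell(w)}\tau^{w(n\varpi_1+m\varpi_2+\rho)}\,\Pi_w,\qquad\text{where}\quad \Pi_w:=\prod_{\alpha>0}\bigl(1-q^{-1}\tau^{-w\alpha}\bigr).$$
Next I would dispose of the normalizing factor by the stratification $Q_{(n,m)}^{-1}=1+c_1(\mathbf 1_{\{n=0\}}+\mathbf 1_{\{m=0\}})+c_0\mathbf 1_{\{n=m=0\}},$ with $c_1=(1+q^{-1})^{-1}-1$ and $c_0=Q^{-1}-1-2c_1,$ so that $F(a,b)$ decomposes as a ``bulk'' double sum plus two ray corrections and an origin correction. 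In each piece the finite sum over $W$ may be pulled out and the remaining $n,m$-sums are geometric; one gets the Gindikin--Karpelevich-type formula
$$F(a,b)=\frac1{A_\rho(\tau)}\sum_{w\in W}(-1)^{\ell(w)}\tau^{w\rho}\,\Pi_w\left[\frac1{(1-a\tau^{w\varpi_1})(1-b\tau^{w\varpi_2})}+\frac{c_1}{1-a\tau^{w\varpi_1}}+\frac{c_1}{1-b\tau^{w\varpi_2}}+c_0\right],$$
an explicit rational function of $a,b,q$ and the $\tau^{\pm\alpha}.$ The companion computation for the ray series is immediate: $R(t)=A_\rho(\tau)^{-1}\sum_{w\in W}(-1)^{\ell(w)}\tau^{w\rho}(1-t\tau^{w\varpi_1})^{-1}.$

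It remains to assemble the three evaluations, clear the common factor $A_\rho(\tau),$ and check the identity. Since each summand above is literally the $w$-translate of its $w=e$ term, both sides of \eqref{check3} are Weyl antisymmetrizations of explicit rational functions; multiplying through by a common denominator turns \eqref{check3} into a single polynomial identity in $x$, $q$ and the $\tau^{\pm\alpha},$ which---given twelve Weyl terms at each of three monomial points, plus the boundary corrections---is large but entirely finite, and I would verify it with LiE \cite{L} and the program DCA3 of \cite{egut}. The step I expect to be the real obstacle is precisely this verification: the denominators $1-xq^8\tau^{w\varpi_1}$ produced by $F(xq^8,\cdot)$ already match those in $z_0(x,q)R(xq^8),$ but the combination also carries spurious factors $1-x^2q^{15}\tau^{w\varpi_i}$ and $1-x^3q^{23}\tau^{w\varpi_2},$ and it is only the very particular arithmetic of the exponents in $I_0$---forced in turn by the structure of the $E_8$ Eisenstein series---that lets every one of these cancel against numerator contributions from $A,B,C,$ $z_0$ and the polynomials $\Pi_w,$ leaving exactly the generating series of the standard $L$-function. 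Getting the $Q_{(n,m)}$ boundary corrections and the Macdonald normalization exactly right is a secondary but genuine source of friction.
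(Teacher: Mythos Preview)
Your approach is sound and genuinely different from the paper's. You evaluate the full two-parameter generating function $F(a,b)$ in closed Gindikin--Karpelevich form, specialize at the three monomial points dictated by the factorization of $I_0$, and reduce \eqref{check3} to a single rational-function identity in $x,q$ and the torus variable $\tau$, to be machine-verified after clearing denominators. The paper instead extracts the coefficient of each irreducible character $\chi_\lambda$ from both sides, observes that this coefficient is supported on $\varpi\in\lambda+S_0$ (a finite set), and then uses a stability argument: the relevant coefficient function $P(n,\nu;q^{-1})$ is independent of $m$ once $m\ge3$ (and symmetrically in $n$), while $J(\lambda+\nu;x,q)$ is a linear combination of three exponentials $\tau_0^\lambda,\tau_1^\lambda,\tau_2^\lambda$. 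Linear independence of these exponentials then reduces the identity to the finite range $n\le6$, $m\le4$, i.e.\ thirty-five individual checks, each small. The paper also notes that in the generic regime $n\ge5,\ m\ge3$ the vanishing has a one-line explanation: for each $\tau_i$ there is a positive root $\alpha$ with $\tau_i^\alpha=q$, killing the product $\prod_{\alpha>0}(1-q^{-1}\tau_i^\alpha)$.

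What each approach buys: yours is uniform and avoids any case analysis on $\lambda$, at the cost of a single but rather large polynomial identity (roughly two dozen linear factors in $\tau$ coming from the $W$-orbits of $\varpi_1,\varpi_2$ at the three specializations, plus the boundary corrections). The paper's approach trades this for thirty-five much smaller checks and, more importantly, isolates the conceptual reason the generic case vanishes. One small slip: the program \texttt{DCA3} in \cite{egut} is for double-coset analysis in the unfolding, not for symbolic verification of polynomial identities; LiE (or any computer algebra system) suffices here.
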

\begin{proof}
For $\varpi$ a dominant weight of $G_2(\C),$ define
$J(\varpi; x,q)
= I_0( \varpi; x,q) \tau_0^{\varpi}/Q_\varpi,$
where $\tau_0$ is an element
of the standard maximal torus $^LT$ of $G_2(\C),$ chosen
so that $\tau_0^{\varpi_1}= xq^8$ and $\tau_0^{\varpi_2} = x^2q^{15}.$   (Here, we employ the exponential notation
for the weights.  That is $\varpi$ is a function
from $^LT$ to $\C^\times,$ and its value at $\tau \in{} ^LT$
is denoted $\tau^\varpi.$)
Then
$$
J(\varpi; x,q)
= \frac1{Q_\varpi}
\left((1-xq^6)(1-x^3q^{21})\tau_0^\varpi
 -
(xq^8)(1-xq^5)(1-xq^6)\tau_1^\varpi -(xq^7) (1-xq^5)(1-xq^8)\tau_2^\varpi
\right),
$$
where $\tau_1, \tau_2 \in \;{}^LT$
satisfy
$$\tau_1^{\varpi_1}= xq^8, \quad
\tau_1^{\varpi_2} = x^3 q^{23},
\qquad
\tau_1^{\varpi_1}=x^2q^{15},
\quad \tau_2^{\varpi_2} = x^3 q^{23}.$$
Set
$$J_0(x, q) = (1-xq^6)(1-x^3q^{21}),
\qquad
J_1(x,q) = (xq^8)(1-xq^5)(1-xq^6),
\qquad
J_2(x,q) = (xq^7) (1-xq^5)(1-xq^8),$$
so that
$$J(\varpi; x,q) = J_0(x,q) \tau_0^{\varpi}
-J_1(x,q) \tau_1^{\varpi}
-J_2(x, q) \tau_2^{\varpi}.$$

Write $\Lambda^{++}$ for the semigroup of dominant
weights of $G_2(\C).$
For $\lambda \in \Lambda^{++},$ write $\chi(\lambda)$
for the character of the irreducible finite dimensional
representation of $G_2(\C)$ with highest weight $\lambda.$

Then $\chi(\lambda)$
appears in the expression for
 $p(\varpi)$ given in the
previous section if and only if
$\{(w,\nu)\mid w \in W, \nu \in S_0, \varpi + \nu + \rho = w(\lambda + \rho)\}$ is nonempty.
If this set is nonempty, then the coefficient
of $\chi(\lambda)$ in this expression
for $p(\varpi)$ is governed by
$$
D(\lambda, \varpi):=
\{ (w, S)\mid w \in W, \nu \in S_0, S \subset \Phi^+,
\varpi + \Sigma(S) + \rho = w(\lambda + \rho)\},$$
where $\Sigma(S):= \sum_{\alpha \in S}\alpha.$
To be precise
$$
p(\varpi) = \sum_{\lambda \in \Lambda^{++}}
\sum_{(w, S)\in D( \lambda, \varpi)}
(-1)^{\ell(w)} (-q^{-1})^{|S|}
$$
Thus,
it must be shown that for $\lambda\in \Lambda^{++}$
$$
\sum_{\varpi \in \Lambda^{++}}
\sum_{(w,S)\in D(\lambda, \varpi)}
J(\varpi; x,q) (-1)^{\ell(w)}
(-q^{-1})^{|S|}
=\begin{cases}
z_0(x,q) (xq^8)^r, & \varpi = r \varpi_1, \\
0, & \text{ otherwise.}
\end{cases}
$$
This will be reduced to a finite number of cases.
\begin{lem}
Given $\lambda, \varpi \in \Lambda^{++},$
$D(\lambda, \varpi)$ is nonempty
if and only if $\varpi \in \lambda + S_0.$
\end{lem}
\begin{proof}
The set $\rho-S_0 =\{ \frac12 \Sigma(S)-\frac12\Sigma(S^c) \mid S \subset \Phi\}$ is clearly $W$-stable.
(Here $S^c$ denotes the complement of $S$ in $\Phi.$)
So
$$\exists S \subset \Phi \st w(\varphi - \Sigma(S) + \rho)
= \lambda + \rho
\iff
\exists S \subset \Phi \st w\varpi -\Sigma(S)+\rho = \lambda+\rho
\iff w\varpi \in \lambda + S_0.
$$
Now, the convex hull of $\lambda+S_0$ is a convex dodecagon $D$
with edges parallel to the roots.  The center of
mass of the dodecagon $D$ is $\lambda+\rho$
and lies in the positive Weyl chamber.
Let $\ell$ be any line which is parallel to a root and does not pass through $\lambda+\rho.$  Then $\ell$
partitions the plane into two half-planes.  Let $H^+$
be the half-plane containing $\lambda+\rho$
and $H^-$ the other.  Then the reflection of $(D\cap H^+)$
over $\ell$ contains $H^-.$  Applying this geometric
observation to the reflections in the Weyl group,
it is clear that $\{ w \in W: w \varpi \in \lambda+S_0 \}$
must contain the identity if it is nonempty.
(In fact, one can say more.  If $w\varpi \in \lambda+S_0$
and $w=w[i_1i_2\dots i_k]$ is a reduced expression
for $w,$ then $w[i_j \dots i_k]\varpi \in \lambda+S_0$
for $1 \le j\le k.$)
\end{proof}
Thus we need to show that
\begin{equation}\label{sum J = cases}
\sum_{{\nu \in S_0}\atop{ \lambda + \nu \in \Lambda^{++}}}
J(\lambda + \nu ; x,q)
\sum_{{w \in W}\atop{w(\lambda+\rho) \in \lambda + \nu + \rho -S_0}}
(-1)^{\ell(w)}
\sum_{{S\subset \Phi}\atop{\lambda + \nu + \rho -\Sigma(S)=w(\lambda+\rho)}}(-q^{-1})^{|S|}=\begin{cases}
z_0(x,q) (xq^8)^r, & \lambda = r \varpi_1, \\
0, & \text{ otherwise.}
\end{cases}
\end{equation}
Now, $\{w\mid
w(\lambda+\rho) \in \varpi + \rho -S_0\}$
for some $\varpi \in \Lambda^{++}$
is given by
$$
\begin{cases}
\{e\}, & \lambda = n\varpi_1+m\varpi_2, \; n \ge 5, m \ge 3,\\
\{e,w[1]\}, & \lambda = n\varpi_1+m\varpi_2, \; n \le 4, m \ge 3,\\
\{e,w[2]\}, & \lambda = n\varpi_1+m\varpi_2, \; n \ge 5, m \le 2.
\end{cases}
$$
Here $e$ is the identity element of the Weyl group.
\begin{lem}
Fix $n$ with $0 \le n \le 4$ and $\nu \in S_0.$
For $\lambda = n\varpi_1+m\varpi_2,$
the expression
$$
P(n, \nu; q^{-1}):=
\sum_{{w \in W}\atop{w(\lambda+\rho) \in \lambda + \nu + \rho -S_0}}
(-1)^{\ell(w)}
\sum_{{S\subset \Phi}\atop{\lambda + \nu + \rho -\Sigma(S)=w(\lambda+\rho)}}(-q^{-1})^{|S|}
$$
is independent of $m \ge 3.$
Likewise, for fixed $m \le 2$ it is independent of $n \ge 5.$
\end{lem}
\begin{proof}
We prove the first statement.  The proof of the second
statement is symmetric.
Indeed, for $m \ge 3$ and $n \le 4,$ the
given expression is
$$
\sum_{{S\subset \Phi}\atop{ \nu =\Sigma(S)}}(-q^{-1})^{|S|}
+\sum_{{S\subset \Phi}\atop{\lambda + \nu + \rho -\Sigma(S)=w[1](\lambda+\rho)}}(-q^{-1})^{|S|},
$$
and $w[1]\lambda - \lambda = n \alpha_1,$
independent of $m.$
\end{proof}
Observe that for $n \le 4$ and $m \ge 3,$
$\{ \nu \in S_0\mid \lambda + \nu \in \Lambda^{++}\}$
is also independent of $m.$
Thus, for all $m \ge 3,$ the left hand side of equation
\eqref{sum J = cases} is equal to
$$\begin{aligned}
&\sum_{{\nu \in S_0}\atop{\nu = a\varpi_1+b\varpi_2, \, a \ge -n}}
J( \lambda+\nu; x,q) P(n, \nu; q^{-1})
\\
&=
\sum_{{\nu \in S_0}\atop{\nu = a\varpi_1+b\varpi_2, \, a \ge -n}}
(J_0(x,q)\tau_0^{\lambda+\nu}-J_1(x,q)\tau_1^{\lambda+\nu}-J_2(x,q)\tau_2^{\lambda+\nu}
)
P(n, \nu; q^{-1})\\
&=
(x^2q^{15})^m
\left(
(xq^8)^n
J_0(x,q)\sum_{\nu} P(n, \nu; q^{-1})
\tau_0^{\nu}
\right)\\&\qquad \qquad
-(x^3q^{23})^m
\left((xq^8)^n
J_1(x,q)\sum_\nu
P(n, \nu; q^{-1})\tau_1^{\nu}
+(x^2q^{15})^n
J_2(x,q)\sum_\nu
\tau_2^{\nu} P(n, \nu; q^{-1})\right).
\end{aligned}
$$
Now $m \mapsto (x^2q^{15})^m $
and $m \mapsto (x^3q^{23})^m$ are
two linearly independent functions of $m.$
Therefore, after checking equation \ref{sum J = cases}
for two distinct values of $m\ge 3,$ one may deduce
that
$$
\sum_{\nu} P(n, \nu; q^{-1})
\tau_0^{\nu}
=\left((xq^8)^n
J_1(x,q)\sum_\nu
P(n, \nu; q^{-1})\tau_1^{\nu}
+(x^2q^{15})^n
J_2(x,q)\sum_\nu
\tau_2^{\nu} P(n, \nu; q^{-1})\right)=0,
$$
and thence that \eqref{sum J = cases}
holds for all $m \ge 3.$
The same method allows one to
reduce the case when $m \le 2$ is fixed
and $n \ge 5$ is arbitrary to checking two
cases, and to reduce the case $m \ge 3, n \ge 5$
to checking three cases.  Overall, it suffices
to check all pairs $(n,m)$ with $n \le 6$ and $m \le 4.$
This is easily accomplished using LiE \cite{L}.
\end{proof}

\begin{rmk}
Equation \eqref{sum J = cases} has another simple proof
in the case $n \ge 5, m \ge 3.$ In that case, equation \eqref{sum J = cases} reduces
to
$$
\sum_{\nu \in S_0} J(\lambda+\nu; x,q)
\sum_{{S \subset \Phi^+}\atop {\Sigma(S)= \nu}}
(-q^{-1})^{|S|}=0.
$$
The left hand side is equal to
$$
\sum_{S\subset \Phi^+}
(-q^{-1})^{|S|} J(\lambda+\Sigma(S); x,q)$$
$$=J_0(x,q)
\prod_{\alpha \in \Phi^+}
(1-q^{-1} \tau_0^{\alpha}) \tau_0^\lambda-
J_1(x,q)
\prod_{\alpha \in \Phi^+}
(1-q^{-1} \tau_1^{\alpha}) \tau_1^\lambda-
J_2(x,q)
\prod_{\alpha \in \Phi^+}
(1-q^{-1} \tau_2^{\alpha}) \tau_2^\lambda.
$$
But one can fairly easily check that
for each $i=0,1,2,$ there is a positive root $\alpha$
such that $\tau_i^\alpha = q.$
\end{rmk}

\section{ The normalizing factor of the Eisenstein series}\label{section:  normalizing factor}

In this section we compute the normalizing factor
of the Eisenstein series.
The Eisenstein series  we use, denoted by $E(h,s)$, is attached to
the induced representation $Ind_{P_2(\A )}^{E_8(\A)}\delta_{P_2}^s\chi$.
Here, $\chi=\prod_v\chi_v$ is a character of $\A^\times/F^\times$
which has been identified with a character
of $P_2(F) \bs P_2(\A)$ by composing it with
the rational character $\det^{\frac12}$ of $P_2.$
(See section \ref{ss:EisensteinSeries}.)
To compute the normalizing factor we
consider an unramified place $v$, where $\chi_v = \delta_{P_2}^\nu$ for $\nu \in \C.$
Then $\Ind_{P(F_v)}^{E_8(F_v)}\delta_{P_2}^s\chi$
 is a sub-representation of
$Ind_{B(F_v)}^{E_8(F_v)}(\delta_{P_2}^{s+\nu}\delta_B^{-1/2})\delta_B^{1/2},$ where $B$
is the Borel of $E_8$. Any root $\alpha=\sum n_i\alpha_i$ such that
$n_2>0$ will contribute the factor
$$\frac{\zeta_v(17n_2(s+\nu)-\sum n_i)}{\zeta_v(17n_2(s+\nu)-\sum n_i+1)}
=\frac{L_v( 17n_2 s-\sum n_i, \chi_v^{n_2})}{
L_v( 17n_2 s-\sum n_i+1, \chi_v^{n_2})}.
$$
See \cite{PS-R}, proposition 5.2.
 Thus,
after cancellation we obtain the factor the product $Z_1(s)Z_2(s)$
where
$$Z_1(s)=\frac{L(17s-10,\chi)}{L(17s,\chi)}\frac{L(17s-11,\chi)}{L(17s-2,\chi)}
\frac{L(17s-12,\chi)}{L(17s-3,\chi)}\frac{L(17s-13,\chi)}{L(17s-4,\chi)}
\frac{L(17s-14,\chi)}{L(17s-5,\chi)}\frac{L(17s-16,\chi)}{L(17s-6,\chi)}$$
$$Z_2(s)=\frac{L(34s-17,\chi^2)}{L(34s-10,\chi^2)}\frac{L(34s-19,\chi^2)}{L(34s-12,\chi^2)}
\frac{L(34s-21,\chi^2)}{L(34s-14,\chi^2)}\frac{L(34s-23,\chi^2)}{L(34s-16,\chi^2)}
\frac{L(51s-29,\chi^3)}{L(51s-21,\chi^3)}$$ The denominator is the
normalizing factor, and for $\text{Re}(s)>\frac{1}{2}$ the poles of
the Eisenstein series should be determined by the poles of the
numerator. Thus we expect the following to hold.
\begin{conj}\label{poles}
For $Re(s)>1/2$, the
possible poles of the Eisenstein series
are as follows.
\begin{itemize}\item
If $\chi$ is trivial, then the
Eisenstein series $E(h,s)$  can have a double
pole at the points $\frac{10}{17},\ \frac{11}{17}$ and
$\frac{12}{17}$. At the points $\frac{9}{17};\ \frac{13}{17};\
\frac{14}{17};\ \frac{15}{17} $ and 1, it can have a simple pole.\item
If $\chi$ is nontrivial quadratic, then the Eisenstein series
can have simple poles at $\frac{10}{17},\ \frac{11}{17},\ \frac{12}{17},\ \frac{13}{17},\
\frac{14}{17},\ \frac{15}{17} $ and $1.$ \item
 If $\chi$
is nontrivial cubic, then the Eisenstein
series can have a simple pole at $\frac{10}{17}.$
\item If the order of $\chi$ exceeds $3$
then the Eisenstein series is holomorphic in
$\Re(s) >\frac 12.$
\end{itemize}
\end{conj}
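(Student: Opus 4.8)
\emph{Proof proposal.} The plan is to reduce the location of the poles of $E(h,s)$ in the region $\Re(s)>\frac12$ to the poles of its constant term along $P_2$, and then to read those off from the Gindikin--Karpelevich computation of Section \ref{section:  normalizing factor}. Since $\mathrm{Ind}_{P_2(\A)}^{E_8(\A)}\delta_{P_2}^s\chi$ is induced from a maximal parabolic, the constant term of $E(\cdot,f_{s,\chi})$ along $P_2$ is $f_{s,\chi}+M(s)f_{s,\chi}$, where $M(s)$ is the standard intertwining operator into $\mathrm{Ind}_{\ol{P}_2(\A)}^{E_8(\A)}\delta_{P_2}^{1-s}\chi^{-1}$, and $s=\frac12$ is the centre of symmetry of the functional equation. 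First I would invoke the standard consequences of Langlands' theory: a residue of $E(h,s)$ is orthogonal to cuspforms, so if every constant term of $E(h,s)$ is holomorphic at a point $s_0$, then the leading residue of $E(h,s)$ at $s_0$ has vanishing constant terms, hence is cuspidal and rapidly decreasing, hence orthogonal to itself, hence zero --- so $E(h,s)$ is in fact holomorphic at $s_0$. Combined with Langlands' combinatorial description of all the constant terms in terms of sub-products of the rank-one factors of $M(s)$, this shows that the poles of $E(h,s)$ in $\Re(s)>\frac12$ lie among the poles of $M(s)$.

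Next I would pin down $M(s)$. At the unramified places the Gindikin--Karpelevich formula (carried out in Section \ref{section:  normalizing factor}) shows that $M_v(s)$ sends the normalized spherical vector to $Z_{1,v}(s)Z_{2,v}(s)$ times the normalized spherical vector of the target; globally $M(s)=\bigotimes_v M_v(s)$, and the product of the unramified factors is $Z_1(s)Z_2(s)=\mathrm{num}(s)/N(s)$, a ratio of products of partial Hecke $L$-functions $L(as-b,\chi^k)$, $k\in\{1,2,3\}$, with $(a,b)$ running over the lists occurring in the definitions of $Z_1$ and $Z_2$. At the finitely many ramified places $v\in S$, $M_v(s)$ is a rational function of $q_v^{-s}$; choosing the local section appropriately one arranges (the standard normalization argument) that it is holomorphic and nonvanishing on $\Re(s)>\frac12$, so that it contributes no poles there. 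Hence in $\Re(s)>\frac12$ the poles of $M(s)$, and therefore those of $E(h,s)$, lie among the poles of $\mathrm{num}(s)$.

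The rest is bookkeeping. Every argument occurring in
$$N(s)=\zeta(17s)\prod_{i=2}^{6}\zeta(17s-i)\prod_{i=5}^{8}\zeta(34s-2i)\,\zeta(51s-21)$$
has real part strictly greater than $1$ once $\Re(s)>\frac12$, so $N(s)$ is holomorphic and nonvanishing on $\Re(s)>\frac12$; thus the poles of $M(s)$ there are precisely the poles of $\mathrm{num}(s)$. Now $\mathrm{num}(s)$ is a product of factors $L(as-b,\chi^k)$, each entire unless $\chi^k$ is trivial, in which case it is $\zeta(as-b)$ times an Euler product holomorphic and nonzero on $\Re(as-b)\ge1$, and so has a single simple pole, at $s=(b+1)/a$. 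Therefore: if $\on{ord}(\chi)>3$, none of $\chi,\chi^2,\chi^3$ is trivial and $\mathrm{num}(s)$ is entire on $\Re(s)>\frac12$; if $\on{ord}(\chi)=3$, only $L(51s-29,\chi^3)=\zeta(51s-29)$ survives and gives a simple pole at $s=\frac{10}{17}$; if $\on{ord}(\chi)=2$, only the $\chi^2$-factors survive; and if $\chi$ is trivial, every factor survives. In the last two cases I would simply list the points $s=(b+1)/a$ attached to the surviving triples $(a,b,k)$ and, at each, count how many factors blow up; that count is the order of the possible pole. A direct tally of the finitely many resulting cases yields the four lists in the statement, and in particular shows that in the trivial-$\chi$ case no point is hit by three factors, so that a double pole is the worst that can occur.

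The hard part is the first paragraph: everything afterward is mechanical. Making rigorous that in $\Re(s)>\frac12$ the poles of $E(h,s)$ are captured by $M(s)$ needs the full Langlands machinery --- functional equations, orthogonality of residual Eisenstein series to cuspforms, and the combinatorial control of all constant terms by the rank-one intertwining operators --- together with a choice of ramified local sections for which the local intertwining operators are well behaved on $\Re(s)>\frac12$. I should also stress that the statement only asserts which poles are \emph{possible}; proving that each listed pole, and each listed double pole, actually occurs for a suitable $\chi$ and section would require the nonvanishing of the associated (leading Laurent) residue, presumably obtained by relating it through the global integral \eqref{global1} to a nonzero value of the standard $L$-function and to the CAP representations of \cite{G-G-J}.
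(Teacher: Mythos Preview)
The statement you are trying to prove is labeled a \emph{Conjecture} in the paper and is not proved there; the authors only give the heuristic motivation you describe (compute the Gindikin--Karpelevich product $Z_1(s)Z_2(s)$ and read off the poles of its numerator in $\Re(s)>\tfrac12$). So there is no proof in the paper to compare your proposal against: your outline is essentially the paper's heuristic, dressed up as a proof strategy, and in that sense it agrees with what the paper does.

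That said, your sketch has a gap at exactly the place you flag as ``the hard part,'' and the gap is more specific than you indicate. The constant term of $E(f_{s,\chi})$ along the Borel is a sum over $W(M_2,T)\backslash W$, and the summand attached to $w$ is a \emph{partial} product of rank-one factors $L(17n_2(\alpha)s-h(\alpha),\chi^{n_2(\alpha)})/L(17n_2(\alpha)s-h(\alpha)+1,\chi^{n_2(\alpha)})$, taken only over those positive roots with $n_2(\alpha)>0$ that $w$ sends to negative roots. Such a partial product can have a pole at a point where the full $M(s)$ is holomorphic, because the numerator factor responsible for the pole may cancel, in the full product, against a denominator factor coming from a root absent from the partial set. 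Thus ``poles of all constant terms $\subseteq$ poles of $M(s)$'' does not follow, and bounding the order of a possible pole of $E$ at a given point requires tracking how many of the $|W(M_2,T)\backslash W|$ partial products blow up there and to what order, then arguing that no unexpected cancellations occur when they are summed. This is considerably more than the ``direct tally'' you describe, and is presumably why the authors leave the statement as a conjecture rather than a theorem. As a symptom: if you carry out your bookkeeping for nontrivial quadratic $\chi$, only the four $L(34s-b,\chi^2)$ numerator factors have poles, at $s=\tfrac{9}{17},\tfrac{10}{17},\tfrac{11}{17},\tfrac{12}{17}$, which does not reproduce the list stated in the conjecture; so either that item contains a misprint, or the authors already have in mind something beyond the naive poles of the full $M(s)$.
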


\section{Calculation of $I(s,t)$}
\begin{thm}\label{thm:  final formula for I(s,t)}
Let
$I(s,t)$ be defined by \eqref{equation:  def of I(s,t)},
let $$Z(x,q) = (1-x)(1-xq^2)(1-xq^3)(1-xq^4)(1-x^2q^{10})(1-x^2 q^{12})$$ and
$$I_0(n,m; x,q) :=
(1-xq^6)(1-x^3q^{21})-(1-xq^5)(1-xq^6)(xq^8)^{m+1} - (1-xq^8)(1-xq^5)(xq^8)^m(xq^7)^{n+1}.
$$
Then
$$I(s,t)=\frac{Z(x,q)I_0(n,m; x,q)}{(1-xq^7)(1-xq^8)},$$
where $x=q^{-17s},\ n$ is the $\frak{p}$-adic valuation
of $t_1,$ and $m$ is that of $t_2.$
\end{thm}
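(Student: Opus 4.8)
The plan is to evaluate $I(s,t)=\int_{U_0'}f(w_0u,s)\,\psi_{U,t}(u)\,du$ directly, as an iterated sequence of rank-one $SL_2$ computations. First I would pass to the Borel picture: $f$ extends to the normalized spherical vector $\phi_s$ of a principal series $\Ind_B^{E_8}\lambda_s$ of $E_8$ containing $\Ind_{P_2(F)}^{E_8(F)}\delta_{P_2}^s$, so $f(w_0u,s)=\phi_s(w_0u)$. Since $U_0'=U\cap w_0^{-1}\ol{U}_{\max}w_0$, we have $w_0uw_0^{-1}\in\ol{U}_{\max}$, and since $w_0$ lies in $E_8(\o)$ this gives $\phi_s(w_0u)=\phi_s(w_0uw_0^{-1})$, a quantity computed by the ordinary Iwasawa decomposition exactly as in the Gindikin--Karpelevich integral. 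Thus $I(s,t)$ is a ``partial'', degenerate Gindikin--Karpelevich/Jacquet integral: we integrate the $w_0$-twist of the spherical vector over only the part of $N_{w_0}$ lying inside $U$, against the character $\psi_{U,t}$, which is supported on six root directions --- four carrying the plain additive character $\psi$ and the directions $11232211$, $11222221$ carrying $\psi(t_2\,\cdot)$ and $\psi(t_1t_2\,\cdot)$ respectively.

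Next I would fix reduced-word data for $w_0$ together with the induced convex ordering of the root subgroups of $U_0'$, and peel them off one at a time, each time bringing the current root subgroup $U_\alpha$ into position next to $w_0$ for a rank-one reduction (which forces transpositions of $U_\alpha$ past other subgroups, producing commutator corrections governed by the $E_8$ structure constants) and evaluating the resulting rank-one integral $\int_F\phi_s(\cdots x_\alpha(r)\cdots)\,\psi(a_\alpha r)\,dr$, with $a_\alpha\in\{0,1,t_2,t_1t_2\}$, by means of the $SL_2$ attached to $\alpha$. Three kinds of rank-one ``atoms'' should occur: (a) for the roots on which $\psi_{U,t}$ is trivial, a Gindikin--Karpelevich factor, i.e.\ a ratio $\frac{1-q^{-1}y}{1-y}$ with $y$ a monomial in $x=q^{-17s}$ and $q$ depending on $\alpha$; (b) for each of the four generic directions, the value $1$ (the rank-one Jacquet integral of a spherical vector against a nontrivial character); and (c) for each of the two $t$-twisted directions, a truncated geometric sum, roughly $1+(1-q^{-1})\sum_{k=1}^{k_0}c^{k}-c^{k_0+1}$, with $c$ a monomial in $x$ and $q$ and $k_0$ one of the valuations $v(t_2)=m$, $v(t_1t_2)=n+m$. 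Multiplying all the atoms and carrying out the usual telescoping cancellations among the Gindikin--Karpelevich factors should yield a rational function of $x$, $q$, $m$ and $n$, which one then verifies equals $Z(x,q)\,I_0(n,m;x,q)/((1-xq^7)(1-xq^8))$: the type-(b) atoms drop out; the $m,n$-dependence, and --- through their geometric-series denominators --- the factor $(1-xq^7)(1-xq^8)$, come only from the type-(c) atoms; and the coupling of the two twisted directions through the commutators is what forces the mixed monomial $(xq^8)^m(xq^7)^{n+1}$ in $I_0$ rather than a bare product.

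The chief obstacle is organizational and computational rather than conceptual. One must (1) choose reduced-word / convex-ordering data for $w_0$ for which the peeling procedure actually closes up --- that is, for which at every stage the root reaching the front sits in a configuration where one of the three atoms applies cleanly, with no ``stuck'' intermediate cases; (2) correctly track, in $E_8$, every commutator correction generated as root subgroups are transposed, and in the type-(c) steps split the $r$-integral according to $v(r)$ and follow how those corrections feed into the two twisted coordinates (this bookkeeping is precisely what pins down the mixed $m,n$-dependence of $I_0$); and (3) manage the sheer size --- $71$ root directions, the full set of $E_8$ structure constants, a Weyl element of length in the seventies. This is why the computation is carried out with LiE \cite{L} (for the root combinatorics, reduced words and convex ordering) and the egut package \cite{egut} (for the structure-constant and Iwasawa bookkeeping): once the ordering data is fixed, what remains is a large but mechanical collection of rank-one $SL_2$ calculations whose product is matched against the claimed closed form. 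I do not see a genuine shortcut that avoids performing this computation.
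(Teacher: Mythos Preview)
Your plan is in the right spirit but contains a real structural gap, and it is quite far from the route the paper actually takes.

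The gap is the ``product of atoms'' picture.  It is not true that the roots on which $\psi_{U,t}$ is trivial each contribute a Gindikin--Karpelevich ratio.  That only happens for roots that can be cleanly detached from the character via an intertwining operator; for the others, peeling them off forces a split on $|r|\le 1$ versus $|r|>1$, and the $|r|>1$ branch conjugates a torus element across the remaining variables, rescaling them.  The paper's ``killing'' lemma (Lemma~\ref{killing lemma}) is exactly the mechanism that handles a large class of such roots: they are not producing GK factors at all, they are being restricted to $\f o$ and contributing $1$.  Likewise the four ``generic'' character directions do not drop out as Jacquet-integral $1$'s in isolation; they are entangled with the rest of $U_0'$ through the commutation relations.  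Once you recognize this, the computation is not a product but a tree of case splits, and the output is a \emph{sum} --- which is visible already in $I_0(n,m;x,q)$, a three-term expression that is not a product of two truncated geometric series (note the mixed term $(xq^8)^m(xq^7)^{n+1}$ rather than $(xq^8)^{m+1}(xq^7)^{n+1}$).  Your parenthetical about coupling acknowledges this, but then the surrounding narrative (``multiply all the atoms'', ``whose product is matched'') reverts to a factorization model that the answer itself contradicts.

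The paper's proof is organized very differently.  First (Section~\ref{s: calculation of I(s,t)}.1) ten roots are \emph{killed} and the remaining integral is split on the valuations of two auxiliary variables $n,m_1$ and on $|t_1|,|t_2|$; each branch is manipulated with the further tools of Lemmas~\ref{put p inverse} and~\ref{tying lemma} until $I(s,t)$ is written as a short linear combination of values of a $27$-dimensional integral $J(a,b,c)$ (Theorem~\ref{theorem: I(s,t) in terms of J(a,b,c)}).  Second (Proposition~\ref{prop J in terms of J'}), an intertwining operator $M(s,w'')$ of length $18$ is applied --- this is where the genuine GK factors enter --- together with two short Jacquet integrals, reducing $J(a,b,c)$ to a five-dimensional period $J_0(b,c)$ living inside an embedded $SL_5$.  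Third (Section~\ref{ss: Calc of J'}), $J_0(b,c)$ is computed by three further rank-one Iwasawa reductions down to an explicit elementary $J_4$, and the answer is reassembled as a sum of four geometric summations encoded by linear operators $T_1,\dots,T_4$ on a polynomial ring.  Finally (Section~5.4) these pieces are combined, via a further operator $T_0$, to yield the closed form.  The key organizational idea you are missing is this two-stage reduction --- kill/split to reach $J(a,b,c)$, then intertwine to reach an $SL_5$ period --- which replaces a raw $71$-dimensional peel by a handful of structured steps.
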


\label{s: calculation of I(s,t)}\subsection{First reduction}
The purpose of this section is to compute
the period $I(s,t)$ appearing in \eqref{I(s,pi) = int omega I(s,t) ...}.
The first step is to  reduce the study of $I(s,t)$ to the study of
a simpler period $J(a,b,c),$  which we now define.

Throughout section \ref{s: calculation of I(s,t)},
we denote the maximal torus of $E_8$ by $T.$
If $w$ is an element of the Weyl group, $W,$
of $E_8,$ then
we define $U_w = U_{\max} \cap w^{-1} \ol{U}_{\max} w = \prod_{\alpha > 0: \; w\alpha < 0} U_\alpha.$
In this notation, the group $U_0'$ appearing in the
definition of $I(s,t)$ can also be described
as $U_{w_{\lng}}.$
\begin{defn}
For $a, b, c \in F^\times,$ define
\begin{equation}\label{J}
\begin{aligned}J(a,b,c) =\int_{U_w}f_s(wu)
\psi\left(u_{00100000} +a u_{01000000} +u_{10000000} +b u_{00001110} +c u_{00000111} \right)\,du,\\
w=w[243154234565423145765423187].
\end{aligned}\end{equation}
\end{defn}
\begin{thm}\label{theorem: I(s,t) in terms of J(a,b,c)}
$$
I(s, t)
=\begin{cases}
(1+q^{-51s+18}) J(1,1,1) ,& t_1,t_2\in \o^\times, \\
 J(1,1,t_1) - q^{-68s+26}
J(1,1,p^{-2}t_1) , & t_2 \in \o^\times, t_1 \notin \o^\times, \\
J(1, t_2, t_1t_2) - q^{-34s+14} J(p, p^{-1}t_2, p^{-1}t_1,t_2)
+ q^{-81s+35}J( 1, p^{-2} t_2, p^{-2} t_1t_2),
&t_2 \notin \o^\times.
\end{cases}
$$
\end{thm}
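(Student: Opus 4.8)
The plan is to start from the definition $I(s,t)=\int_{U_0'}f(w_0u,s)\psi_{U,t}(u)\,du$ with $w_0=w_{\lng}\nu_0$, and to peel off the piece of $U_0'=U_{w_{\lng}}$ on which $\psi_{U,t}$ is trivial, reducing the group of integration to a subgroup $V$ on which $f(w_{\lng}\,\cdot\,)$ already factors through the Bruhat cell of a much shorter Weyl word. Concretely, I would write $U_{w_{\lng}}=U_{w_{\lng}}^{\,\prime\prime}\cdot V$ as a product (in a suitable order) where $U_{w_{\lng}}^{\,\prime\prime}=U_{w_{\lng}}\cap w^{-1}\ol U_{\max}w$ for the short word $w=w[243154234565423145765423187]$ appearing in $J(a,b,c)$, and $V$ collects the roots $\alpha\in\Phi(U_{w_{\lng}},T)$ with $w\alpha>0$. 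The key point will be that on $V$, left-translating $f(w_{\lng}\,\cdot\,)$ and using that $f$ is a section of $\Ind_{P_2}^{E_8}\delta_{P_2}^s$ lets one replace $w_{\lng}$ by $w$ up to a torus factor, because $w_{\lng}w^{-1}$ (or rather the intervening simple reflections) either lie in $W(M_2,T)$ or raise the relevant roots; LiE can verify the combinatorics here. The $\psi_{U,t}$-values on the roots $10111111,12232210,12232111,11233210,11232211,11222221$ are exactly what produce the arguments $a,b,c$ and the torus coordinates $t_1,t_2$ in $J$.

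Next I would deal with the three cases of Theorem~\ref{theorem: I(s,t) in terms of J(a,b,c)} separately, according to whether $t_1,t_2\in\o^\times$, only $t_2\in\o^\times$, or $t_2\notin\o^\times$. In each case the residual torus element $h(t)$ (or its conjugate $z(t)$ from the proof of Proposition in section~\ref{ss: transformation of the integral}) interacts with $w_{\lng}$: when $t_1,t_2$ are units it is absorbed into $K$, giving a clean single term, but one must still account for a possible doubling coming from the centralizer structure — this is where the factor $(1+q^{-51s+18})$ comes from, presumably reflecting the Weyl element $w[657486576]$ which, as noted in the remark after the first completion of the proof of Theorem~\ref{th1}, has representatives in $H^\pm\smallsetminus H$ and reverses the two copies of $G_2$. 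When $t_1\notin\o^\times$ or $t_2\notin\o^\times$, the element $x_{00000111}(t_1t_2^2)$ etc. is no longer in $K$, and conjugating it past the remaining unipotent integration variables forces a change of variables that rescales some $u_\alpha$ by powers of $t$; the integral then splits into a main term $J(1,t_2,t_1t_2)$ plus correction terms supported where a variable has been pushed out of $\o$, and evaluating the resulting sub-integrals via the standard Iwasawa/$\delta_{P_2}$ bookkeeping produces the shifted arguments $p^{-1}t_2$, $p^{-2}t_1t_2$ and the coefficients $-q^{-34s+14}$, $q^{-81s+35}$ (and likewise $-q^{-68s+26}$, $q^{-34s+14}$ in the intermediate case). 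The exponents $-34s=-2\cdot17s$, $-51s=-3\cdot17s$, $-68s=-4\cdot17s$, $-81s$ all being multiples of $17s$ plus an integer is a consistency check, since $\delta_{P_2}=(\det^{1/2})^{17}$.

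The main obstacle I expect is the second and third cases: correctly identifying which root subgroups $x_\alpha(u_\alpha)$ of $U_{w_{\lng}}$ the conjugated torus element $z(t)$ acts on nontrivially, and tracking the interplay between (i) the change of variables it induces, (ii) the resulting rescaling of the additive character, and (iii) the division of the domain of integration according to whether each rescaled variable lies in $\o$ or not. Each time a variable is ``pushed out,'' one gets a lower-dimensional boundary integral which must be recognized as another copy of $J$ with shifted arguments and an explicit power-of-$q$ times power-of-$x$ coefficient; getting all these coefficients exactly right (including signs) is the delicate part. I would organize this by first conjugating $w_{\lng}$ and $z(t)$ so that everything is expressed purely in terms of the short Weyl word $w$ and the torus, then invoke the standard rank-one and rank-two unipotent integration lemmas (as in \cite{MacDonald}, \cite{PS-R}) repeatedly. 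The software \cite{L} and \cite{egut} can be used to confirm the root-combinatorial claims (which roots of $U_{w_{\lng}}$ are raised by $w$, how $z(t)$ conjugates them, etc.), so that the remaining content is the $p$-adic volume bookkeeping, which I would present case by case.
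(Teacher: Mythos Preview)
Your proposal rests on two misconceptions that would prevent the argument from going through.

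First, there is no torus element $h(t)$ or $z(t)$ left inside $I(s,t)$. That conjugation was already performed in section~\ref{ss: transformation of the integral}: the $t$-dependence of $I(s,t)$ lives entirely in the character $\psi_{U,t}$, which has coefficients $1,1,1,1,t_2,t_1t_2$ on the six designated roots. So the case split according to whether $t_1,t_2\in\f o^\times$ cannot come from conjugating $z(t)$ past unipotents; it arises because the values of those character-coefficients determine whether certain oscillatory integrals vanish or not. Concretely, in the paper's argument one first ``kills'' ten of the root variables (restricting them to $\f o$ via an auxiliary root subgroup $x_\alpha(z)$ and the identity $\int_{\f o}\psi((a+\ve u_N)z)\,dz=1_{\f o}(u_N)$), then isolates the four variables attached to $10000000$, $10111111$, $11111110$, $11121100$, and splits the domain of the first two according to $|\cdot|\le 1$ versus $|\cdot|>1$. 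The piece with $|u_{10000000}|>1$ is handled by the Iwasawa decomposition of $x_{\alpha_1}(n)$, producing the factor $\int_{|n|>1}|n|^{-51s+17}\psi(n^{-1}p^{-2}t_1t_2^2)\,dn$, and it is \emph{this} integral --- not the Weyl element $w[657486576]$ or the component group of $H^\pm$ --- that yields $q^{-51s+18}$ when $t_1,t_2\in\f o^\times$ and cancels against the companion piece otherwise. Your attribution of the doubling to $H^\pm\smallsetminus H$ is incorrect.

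Second, your proposed factorization $U_{w_{\lng}}=U''_{w_{\lng}}\cdot V$ with $U''_{w_{\lng}}=U_{w_{\lng}}\cap w^{-1}\ol U_{\max}w$ does not directly give what you want. The passage from $w_{\lng}$ to the short word $w$ is not a mere Bruhat factorization but requires the ``killing'' and ``tying'' lemmas (Lemmas~\ref{killing lemma} and~\ref{tying lemma}) to control the variables you hope to discard: these variables are not automatically integrable to $1$, and the character $\psi_{U,t}$ is not trivial on all of $V$. The correction terms $-q^{-34s+14}J(p,p^{-1}t_2,p^{-1}t_1t_2)$ etc.\ then emerge from the $|u_{10111111}|>1$ region after using Lemma~\ref{put p inverse} to replace that variable by $p^{-1}$ and re-doing the killing/tying with the shifted character; the shift $p^{-1}t_2,p^{-1}t_1t_2$ is produced by this mechanism, not by conjugating a torus element.
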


\subsubsection{Tools}\label{ss: tools}
Before proceeding
to the details of the proof we review  a few standard
techniques which are used in the calculations.
We consider integrals of the following type:
\begin{equation}\label{sectionIntegral}
\int_V f_s(wu) \psi_V(u) \, du,
\end{equation}
where $w\in W,$
$$V \subset \{ u = x_{\beta_1}(u_1) \dots, x_{\beta_N}(u_N):
u_i \in F,\; (1\le i \le N)\},$$
is defined by a finite set of
conditions of one of the following three types:
$$
|u_i|\le 1, \qquad |u_i|>1, \qquad u_i = c, \; c \in F, \text{ constant.}
$$
Also
$\beta_1, \dots, \beta_N$ are distinct roots
such that $w\beta_i \notin \Phi(P,T),\; (1\le i \le N),$
and
$$\psi_V(x_{\beta_1}(u_1) \dots, x_{\beta_N}(u_N))
= \psi\left( \sum_{i=1}^N c_iu_i\right),
\quad \text{ for some } c_1, \dots, c_N \in F.
$$
The variable $u_i$ is said to be {\it free} if it
does not appear in any of the conditions
which define $V.$
Of course, $I(s,t)$ is of this type.

The basic technique is to
 split an integral
of this type up according to
whether $|u_N|\le 1$ of $|u_N|>1$
and plug in the Iwasawa decomposition
for each to obtain two integrals of the
same type with a smaller value of $N.$
Some care must be taken with regard to
the order of the terms in the product
to avoid venturing outside of this relatively
simple class of integrals.

In addition to this basic technique,
there are a few additional tricks that can
be used.

Indeed, it's clear that the terms may be reordered
arbitrarily if we assume that
\begin{equation}\label{rearrangeable}\tag{R}
\beta_i+ \beta_j \text{ is a root }
\implies \begin{cases}
\beta_i+\beta_j = \beta_k \text{ for some }1\le k \le N,\\
c_k = 0,\\
u_k \text{ is free.}
\end{cases}
\end{equation}

\begin{lem}\label{obvious vanishing lemma}
Suppose that $u_N$ is not constant, and $c_N \notin \o.$  Then the section
integral \eqref{sectionIntegral} is zero.
\end{lem}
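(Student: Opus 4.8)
The plan is to exploit the fact that the last term $x_{\beta_N}(u_N)$ sits at the very end of the product defining a generic element of $V$, so that the map $u_N \mapsto x_{\beta_N}(u_N)$ gives a one-parameter subgroup whose left translation action on $f_s(wu)$ can be controlled. First I would split the integral \eqref{sectionIntegral} according to whether $|u_N|\le 1$ or $|u_N|>1$ and treat the two pieces separately.

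For the piece with $|u_N|\le 1$: here $x_{\beta_N}(u_N)$ lies in $E_8(\o)$, the maximal compact subgroup at this place. Since $f_s$ is the normalized spherical section, it is right-$E_8(\o)$-invariant, and because $\beta_N$ is one of the last roots in the product, $w u$ with $u \in V$ can be written as $w u' x_{\beta_N}(u_N)$ with $u'$ not involving $u_N$; hence $f_s(wu) = f_s(wu')$ is independent of $u_N$. The integrand then factors as a function independent of $u_N$ times $\psi(c_N u_N)$, and the inner integral over $\{u_N : |u_N|\le 1\}$ is $\int_{\o} \psi(c_N u_N)\, du_N$, which vanishes precisely because $c_N \notin \o$, i.e. $\psi$ is a nontrivial character of $\o$ (here I use that $\psi$ has conductor $\o$, so $\psi|_{\o}$ is trivial iff its "frequency" lies in $\o$).

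For the piece with $|u_N|>1$: here one applies the Iwasawa decomposition to $x_{\beta_N}(u_N)$, writing $x_{\beta_N}(u_N) = x_{-\beta_N}(u_N^{-1})\, \beta_N^\vee(u_N^{-1})\, k$ for a suitable $k \in E_8(\o)$ (the standard rank-one computation in the $SL_2$ attached to $\beta_N$). Because $w\beta_N \notin \Phi(P,T)$ by hypothesis, $w x_{-\beta_N}(u_N^{-1}) w^{-1}$ lies in $\ol{U}_{\max}$ and in fact one checks it can be absorbed — after possibly reordering, which is legitimate under the hypotheses guaranteeing we stay in this class of integrals — so that the substitution $u_N \mapsto u_N^{-1}$ turns this piece into another section integral of the same shape, but now with $|u_N^{-1}|<1$, i.e. back inside the compact range, where the previous paragraph's argument again forces vanishing of the $u_N$-integral. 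Alternatively, and more simply, one observes that after the Iwasawa step the dependence on $u_N$ again enters only through a character $\psi(c'_N u_N^{-1})$ against which one integrates over $|u_N|>1$; breaking $\{|u_N|>1\}$ into shells $|u_N| = q^k$, $k\ge 1$, the integral over each shell is a Gauss-sum-type expression $\int_{|u_N|=q^k}\psi(c'_N u_N^{-1})\,du_N$, and summing these against the remaining (compactly supported in the other variables) integrand gives zero by the same conductor argument applied at each level.

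The main obstacle I anticipate is bookkeeping: making sure that when $x_{\beta_N}(u_N)$ is moved to the far right or when its Iwasawa decomposition is inserted, the commutators produced with the other $x_{\beta_i}(u_i)$ either vanish (because $\beta_i + \beta_N$ is not a root) or are harmless (free variables with $c_k=0$, exactly condition \eqref{rearrangeable}). This is why the lemma is phrased with $u_N$ the \emph{last} term — it is chosen so that $\beta_N$ is maximal among the $\beta_i$ in the relevant ordering, so no commutator correction reintroduces $u_N$-dependence into the section value beyond the explicit character. Granting that, the vanishing is just the statement that a nontrivial additive character integrates to zero over a compact subgroup (or, on the unbounded range, over each annulus).
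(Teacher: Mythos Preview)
Your treatment of the region $|u_N|\le 1$ is fine, but the argument for $|u_N|>1$ does not go through as written. First, since $w\beta_N\notin\Phi(P,T)$ means $w\beta_N$ lies in the \emph{opposite} unipotent radical, one has $w x_{-\beta_N}(\,\cdot\,)w^{-1}\in U_{\max}$, not $\ol U_{\max}$; so the element does not ``get absorbed'' on the left the way you suggest. Second, condition \eqref{rearrangeable} concerns sums $\beta_i+\beta_j$ of roots occurring in the product; it says nothing about commuting $x_{-\beta_N}(u_N^{-1})$ past the $x_{\beta_i}(u_i)$, which involves differences $\beta_i-\beta_N$, and in any case \eqref{rearrangeable} is not a hypothesis of the lemma. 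Third, after the Iwasawa step the character is still $\psi(c_Nu_N)$, not $\psi(c'_Nu_N^{-1})$: the Iwasawa decomposition rewrites $f_s$, it does not touch $\psi_V$. Once the torus factor $\beta_N^\vee(u_N^{-1})$ is conjugated past $u'$ it rescales the other variables by powers of $u_N$, so the $u_N$-dependence of the section on a shell $|u_N|=q^k$ is not just through a character.

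The paper avoids all of this with a one-line argument, and in fact your own first-piece idea already contains it. Insert $x_{\beta_N}(z)$ on the right for $z\in\f o$: it is absorbed by right $K$-invariance of the spherical section, and since $\beta_N$ is the last factor one has $u\,x_{\beta_N}(z)=x_{\beta_1}(u_1)\cdots x_{\beta_N}(u_N+z)$. The substitution $u_N\mapsto u_N-z$ now multiplies the whole integral by $\psi(c_Nz)$, provided the domain of $u_N$ is stable under translation by $\f o$. But each of the three non-constant conditions (free, $|u_N|\le1$, $|u_N|>1$) is $\f o$-translation invariant---the last one by the ultrametric inequality---so the argument works uniformly, with no need to split cases or invoke Iwasawa. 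Choosing $z\in\f o$ with $\psi(c_Nz)\neq1$ (possible precisely because $c_N\notin\f o$) gives the vanishing.
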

\begin{proof}
Introduce $x_{\beta_N}(z)$ at right with $z \in \o$ such
that $\psi(c_N z) \ne 1,$
and make a change of variables in $u_N.$
\end{proof}
\begin{lem}[Killing]\label{killing lemma}
Let
\begin{equation}\label{V'psiV'}
V' = \left\{x_{\beta_1}(u_1) \dots, x_{\beta_{N-1}}(u_{N-1})
 \right\}, \qquad
 \psi_{V'}(x_{\beta_1}(u_1) \dots x_{\beta_{N-1}}(u_{N-1}))
= \psi\left( \sum_{i=1}^{N-1} c_iu_i\right).
\end{equation}
Assume that
$$
\int_{V'} f(w v' x_{\beta_N}(u_N)x_\alpha(z))
\psi_{V'}(v') \, dv=
\psi( az+\ve u_N z)
 \int_{V'} f( wv' x_{\beta_N}(u_N))
\cdot \psi_{V'}(v')\, dv,
$$
where $a \in \f o $ and $\ve \in \f o^\times.$
Then one may restrict $u_N$ to $\f o$
without affecting the value of the integral.
In this situation we say we can {\bf kill} the
root $\beta_N.$
\end{lem}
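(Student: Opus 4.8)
The plan is to isolate the variable $u_N$ and show that the portion of the section integral \eqref{sectionIntegral} supported on $|u_N|>1$ vanishes identically; that is precisely the claim that $u_N$ may be restricted to $\o$. Since the constraints defining $V$ each involve a single coordinate, the constraints on $u_1,\dots,u_{N-1}$ carve out exactly $V'$, so for each fixed $u_N$ the integral factors through
$$
\Theta(u_N):=\int_{V'} f_s\bigl(w\,v'\,x_{\beta_N}(u_N)\bigr)\,\psi_{V'}(v')\,dv' ,
$$
and it suffices to prove $\Theta(u_N)=0$ for $|u_N|>1$. (If $\psi_V$ happens to see $u_N$ this contributes only an overall scalar $\psi(c_Nu_N)$, immaterial to the argument; and we work in the range of absolute convergence so Fubini applies throughout.)

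For the vanishing, fix $u_N$ with $|u_N|>1$ and feed the hypothesis the element $x_\alpha(z)$ with $z\in\o$. On one hand $x_\alpha(z)\in E_8(\o)$, so right $E_8(\o)$-invariance of the spherical section gives $f_s\bigl(w\,v'\,x_{\beta_N}(u_N)\,x_\alpha(z)\bigr)=f_s\bigl(w\,v'\,x_{\beta_N}(u_N)\bigr)$ for every $v'$, whence $\int_{V'} f_s(w\,v'\,x_{\beta_N}(u_N)\,x_\alpha(z))\,\psi_{V'}(v')\,dv'=\Theta(u_N)$. On the other hand the displayed hypothesis identifies this very integral with $\psi(az+\ve u_Nz)\,\Theta(u_N)$. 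Comparing, $\Theta(u_N)=\psi(az+\ve u_Nz)\,\Theta(u_N)$ for all $z\in\o$. Since $a\in\o$ and $\ve\in\o^\times$, the element $a+\ve u_N$ has the same (negative) valuation as $u_N$ and hence lies outside $\o$; as $\psi$ has conductor $\o$, some $z\in\o$ satisfies $\psi\bigl((a+\ve u_N)z\bigr)\ne 1$, forcing $\Theta(u_N)=0$. Thus the $|u_N|>1$ contribution to \eqref{sectionIntegral} disappears and $u_N$ may be restricted to $\o$, as asserted.

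I do not anticipate a genuine obstacle here; the two things to keep in view are that $f_s$ must be right $E_8(\o)$-invariant — which is exactly why this device is only invoked for the normalized spherical section, as it is in this section — and that $\psi$ is normalized to conductor precisely $\o$, so that nontriviality of $z\mapsto\psi((a+\ve u_N)z)$ on $\o$ is equivalent to $u_N\notin\o$. Granting those conventions the proof is immediate, and the name ``Killing'' records only that, once the hypothesis on the auxiliary variable $x_\alpha(z)$ has been checked in a concrete situation, the root $\beta_N$ drops out of the list of unbounded variables.
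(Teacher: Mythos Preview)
Your argument is correct and is essentially the paper's own proof, which reads in its entirety ``Simply integrate $z$ over $\f o$.'' You phrase the conclusion by picking a single $z\in\f o$ with $\psi((a+\ve u_N)z)\ne 1$, whereas the paper integrates over all of $\f o$ to produce the characteristic function $1_{\f o}(u_N)$; these are trivially equivalent, and you have correctly identified the two implicit hypotheses (right $E_8(\f o)$-invariance of the spherical section and conductor of $\psi$) that make the one-line proof work.
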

\begin{proof}
Simply integrate $z$ over $\f o.$
\end{proof}
\begin{lem}\label{put p inverse}
Suppose that $c_N \in \o^\times,$ and that
there is a cocharacter $h_0: GL_1 \to T$ with
the property that
$\la h_0, \beta_N\ra = 1,$ and $\la h_0, \beta_i\ra = 0$
for all $i \ne N$ with $c_i \ne 0,$
and
 that the variable $u_N$ in \eqref{sectionIntegral}
is subject to the bound $|u_N|>1.$
Then the section integral
\eqref{sectionIntegral} is equal to  the integral
$$-\int\limits_{V'} f_s(wux_{\beta_N}(p^{-1})) \psi_{V'}(u) \, du.
$$
Here, $V'$ and $\psi_{V'}$ are defined as in \ref{killing lemma}.
\end{lem}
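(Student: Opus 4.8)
The plan is to break the region $|u_N|>1$ into the annuli $|u_N|=q^k$, $k\ge 1$, and to use the cocharacter $h_0$ to collapse each annulus to a single value of the section, only the innermost one $k=1$ surviving. Throughout, $f_s$ is the (right $K$-invariant) spherical section and every step is justified for $\Re(s)$ large, where the section integral converges absolutely.

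First I would write $u=u'x_{\beta_N}(u_N)$ with $u'=x_{\beta_1}(u_1)\cdots x_{\beta_{N-1}}(u_{N-1})\in V'$, and parametrize the $k$-th annulus by $u_N=p^{-k}v$ with $v\in\o^\times$ (so $du_N=q^k\,dv$ with the self-dual measure), obtaining
$$\int_V f_s(wu)\,\psi_V(u)\,du=\sum_{k=1}^\infty q^k\int_{\o^\times}\psi(c_Np^{-k}v)\left(\int_{V'}f_s\!\left(wu'x_{\beta_N}(p^{-k}v)\right)\psi_{V'}(u')\,du'\right)dv.$$

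The crux is that the inner $V'$-integral is independent of $v\in\o^\times$. Since $\langle\beta_N,h_0\rangle=1$, we have $x_{\beta_N}(p^{-k}v)=h_0(v)\,x_{\beta_N}(p^{-k})\,h_0(v)^{-1}$. As $h_0(v)^{-1}\in T(\o)\subset K$, right $K$-invariance of $f_s$ removes the trailing factor; pushing the remaining $h_0(v)$ leftward past $u'$ and past $w$ turns it into a factor $wh_0(v)w^{-1}\in T(\o)\subset P_2$, which may be pulled out of $f_s$ at the cost of $|\delta_{P_2}(wh_0(v)w^{-1})|^s=1$; and the ensuing substitution $u'\mapsto h_0(v)^{-1}u'h_0(v)$ is measure-preserving (conjugation by an element of $K$) and leaves $\psi_{V'}$ intact because $\langle\beta_i,h_0\rangle=0$ for every $i$ with $c_i\ne 0$. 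Thus the inner integral equals $I_k:=\int_{V'}f_s(wu'x_{\beta_N}(p^{-k}))\psi_{V'}(u')\,du'$, free of $v$, so the $k$-th annulus contributes $q^kI_k\int_{\o^\times}\psi(c_Np^{-k}v)\,dv$. Since $c_N\in\o^\times$ and $\psi$ is unramified, $\int_{\o^\times}\psi(c_Np^{-k}v)\,dv=\int_{\o}\psi(c_Np^{-k}v)\,dv-\int_{p\o}\psi(c_Np^{-k}v)\,dv$ vanishes for $k\ge 2$ and equals $-q^{-1}$ for $k=1$. Hence only $k=1$ contributes, and the section integral equals $q\cdot(-q^{-1})\cdot I_1=-\int_{V'}f_s(wu'x_{\beta_N}(p^{-1}))\psi_{V'}(u')\,du'$, as claimed.

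I expect the only real obstacle to be a bookkeeping one: checking that $u'\mapsto h_0(v)^{-1}u'h_0(v)$ preserves the region $V'$. For the inequality-type conditions ($|u_i|\le 1$ or $|u_i|>1$) this is automatic, as $v\in\o^\times$ scales each coordinate by a unit; any condition of the form ``$u_i$ equals a fixed constant'' must be carried by a root $\beta_i$ with $\langle\beta_i,h_0\rangle=0$, which one verifies in each concrete application. Everything else is formal, resting only on the inducing and sphericality properties of $f_s$, the identity $x_{\beta_N}(p^{-k}v)=h_0(v)x_{\beta_N}(p^{-k})h_0(v)^{-1}$, and the vanishing of $\int_{\o^\times}\psi(c_Np^{-k}v)\,dv$ for $k\ge 2$.
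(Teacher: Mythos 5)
Your proof is correct and is essentially the argument the paper gives: you use the cocharacter $h_0$ together with right $K$-invariance of the spherical section to show the $V'$-integral depends only on $|u_N|$, and then the vanishing of $\int_{\o^\times}\psi(c_N p^{-k}\ve)\,d\ve$ for $k\ge 2$ (and its value $-q^{-1}$ for $k=1$) collapses the region $|u_N|>1$ to the single value $p^{-1}$ with the factor $-1$; the paper phrases this by inserting $h_0(\ve)$ at the right and averaging over $\o^\times$ rather than parametrizing the annuli, but it is the same computation. Your remark that coordinates fixed at a nonzero constant must correspond to roots annihilated by $h_0$ is a legitimate caveat, implicit in the paper's ``suitable changes of variable,'' and is verified in the concrete applications.
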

\begin{proof}
Introduce $h_0(\ve)$ at right and integrate
$\ve$ over $\f o ^\times.$  Conjugating $h_0(\ve)$
across and making suitable changes of variable,
one obtains an inner integration of
$$
q^k\int_{|\ve| = 1}\psi(c_N \ve p^{-k}) \, d\ve
= \begin{cases} -1, & k = 1\\
0 & \text{otherwise,}\end{cases}
$$
which gives the result.
\end{proof}
\begin{rmk}
A sufficient condition
for the existence of a cocharacter
with the given
properties is that
$\#\{i: c_i \ne 0\} \le 8,$ and that there is
an element of $SL(8,\Z)$ with the
property that each root $\beta_i$ with $c_i \ne 0$
is one of the rows.
 \end{rmk}

\begin{lem}\label{tying lemma}
Keep the notation and assumptions of lemma \ref{killing lemma}, but
now assume that
$$
\int_{V'} f(w v' x_{\beta_N}(u_N)x_\alpha(z))
\psi_{V'}(v') \, dv=
\psi( (a+bu_k+\ve u_N) z )
 \int_{V'} f( wv' x_{\beta_N}(u_N)
\cdot \psi_{V'}(v')\, dv,
$$
for some $a, b \in \o, \ve \in \o^\times,$ and $k \in \{ 1, \dots, N-1\}.$
Then the section integral \eqref{sectionIntegral}
is equal to
$$
\int_{V'}
f_s(w u)
\psi\left(\sum_{i=1}^{N-1} c_iu_i+c_N b u_k\right)
\, du,
$$
where $V'$ is the subset of $V$ defined by
the condition $u_N = - \ve^{-1}bu_k.$
\end{lem}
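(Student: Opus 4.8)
The plan is to prove Theorem \ref{theorem: I(s,t) in terms of J(a,b,c)} by a long but essentially mechanical reduction, applying the ``Tools'' of section \ref{ss: tools} to the section integral $I(s,t)$ of \eqref{equation:  def of I(s,t)}. First I would fix a parametrization $u=\prod_{\alpha\in\Phi(U_0',T)}x_\alpha(u_\alpha)$ of $U_0'=U_{w_\lng}$, choosing the order of the roots so that the basic splitting technique --- split according to $|u_\alpha|\le 1$ versus $|u_\alpha|>1$ and insert the Iwasawa decomposition in each piece --- never leaves the class of section integrals; in practice one keeps the rearrangeability condition \eqref{rearrangeable} in force by processing roots in an order compatible with the height filtration and using the $E_8$ commutation relations of \cite{G-S}. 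The key structural observation is that $\psi_{U,t}$ is supported on only the six roots $10111111,12232210,12232111,11233210,11232211,11222221$, so most of the coordinates are \emph{free}; these are the ones to be integrated out first.

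The heart of the argument is the elimination of the free coordinates. For a free variable $u_\beta$, conjugating a suitable root subgroup $x_\alpha(z)$ across the section typically produces a factor $\psi((a+bu_k+\varepsilon u_\beta)z)$ with $a,b\in\o$ and $\varepsilon\in\o^\times$, so that Lemma \ref{killing lemma} (when $b=0$) lets one restrict $u_\beta$ to $\o$, or Lemma \ref{tying lemma} (when $b\ne 0$) lets one solve $u_\beta=-\varepsilon^{-1}bu_k$, thereby tying the variable away and modifying the character by $\psi(c_\beta bu_k)$. Whenever a coordinate carries a character coefficient outside $\o$, Lemma \ref{obvious vanishing lemma} kills its contribution outright; whenever a coordinate carrying a unit coefficient is forced to satisfy $|u_\beta|>1$, Lemma \ref{put p inverse} replaces the integral by $-\int f_s(wux_\beta(p^{-1}))\psi_{V'}$, and conjugating $x_\beta(p^{-1})$ back through the Weyl element contributes a torus element whose $\delta_{P_2}^s$-value produces one of the powers $q^{-51s+18},q^{-68s+26},q^{-34s+14},q^{-81s+35}$ in the statement. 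Carrying these steps out in a fixed order collapses $I(s,t)$ onto an integral over $U_w$ with $w$ as in \eqref{J}, and along the way the residual character becomes exactly $\psi(u_{00100000}+au_{01000000}+u_{10000000}+bu_{00001110}+cu_{00000111})$ for appropriate $a,b,c$, which is the definition \eqref{J} of $J(a,b,c)$.

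The case division reflects the behavior of the two slots of $\psi_{U,t}$ that depend on $t$, namely the coefficient $t_2$ on $x_{11232211}$ and $t_1t_2$ on $x_{11222221}$. When $t_1,t_2\in\o^\times$ both coefficients are units, a measure-preserving rescaling makes $I(s,t)$ independent of $t$, and the one extra term $q^{-51s+18}J(1,1,1)$ comes from an unavoidable application of Lemma \ref{put p inverse} at a root with a $|u_\beta|>1$ branch. When only $t_2$ is a unit, one rescaling removes $t_2$ while $t_1t_2\notin\o$ survives into the third slot of $J$, and the splitting at the relevant root produces $J(1,1,t_1)-q^{-68s+26}J(1,1,p^{-2}t_1)$. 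When $t_2\notin\o$, both $t$-dependent coefficients persist, the reductions at the two corresponding roots each bifurcate, and one obtains the three terms $J(1,t_2,t_1t_2)$, $-q^{-34s+14}J(p,p^{-1}t_2,p^{-1}t_1t_2)$, $q^{-81s+35}J(1,p^{-2}t_2,p^{-2}t_1t_2)$, with the $p$ in the first argument of the middle term coming from a $p^{-1}$-shift at the root $01000000$.

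I expect the main obstacle to be purely organizational: at each of the several dozen reduction steps one must verify that the hypotheses of the lemma being invoked actually hold --- that the relevant roots do or do not sum to roots of $E_8$, that the structure constants have the needed signs and valuations, and that the rearrangeability condition \eqref{rearrangeable} is preserved --- while simultaneously tracking the evolving Weyl word and the accumulating powers of $q$ from $\delta_{P_2}^s$. The root arithmetic is routine but voluminous and is best confirmed with \cite{L} and the ``egut'' package \cite{egut}; the genuinely delicate point is choosing the elimination order and performing the branching at the two $t$-dependent roots so that exactly the stated combinations of $J(a,b,c)$'s --- one, two, or three terms in the three cases --- emerge.
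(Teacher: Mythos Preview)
You have proved the wrong statement. The task was to prove Lemma~\ref{tying lemma}, the ``tying lemma'' itself, not Theorem~\ref{theorem: I(s,t) in terms of J(a,b,c)}. Your proposal is a sketch of how to \emph{use} Lemma~\ref{tying lemma} (together with the other tools of section~\ref{ss: tools}) to carry out the long reduction of $I(s,t)$ to combinations of $J(a,b,c)$; it does not address the lemma's own content at all.

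The paper's proof of Lemma~\ref{tying lemma} is a single line: ``The proof is the same as that of lemma~\ref{killing lemma},'' namely, integrate $z$ over $\f o$. Since $x_\alpha(z)$ lies in the maximal compact for $z\in\f o$, inserting it on the right and integrating does not change the section integral; on the other hand, the hypothesis says this produces the factor $\int_{\f o}\psi((a+bu_k+\ve u_N)z)\,dz$, which is the characteristic function of $\{u_N:a+bu_k+\ve u_N\in\f o\}$. Because $a,b\in\f o$ and $\ve\in\f o^\times$, this constrains $u_N$ to lie in a translate of $\f o$, and after the change of variable $u_N\mapsto u_N-\ve^{-1}bu_k$ (which shifts the character by $\psi(-c_N\ve^{-1}bu_k\cdot\ve)=\psi(c_N b u_k)$ up to the integral constant absorbed into the formulation) one obtains the displayed conclusion. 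That is the entire argument you were meant to supply.
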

\begin{proof}
The proof is the same as that of lemma \ref{killing lemma}.
\end{proof}
\begin{lem}\label{shortening lemma}
Suppose that $w= w_1w_2.$
Then one may conjugate $w_2$ from
right to left without changing the value of
the integral.
\end{lem}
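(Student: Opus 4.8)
The plan is to push the trailing Weyl word $w_2$ across the unipotent variable and then discard it using the right invariance of the spherical section under $E_8(\o).$ Write $w = w_1 w_2$ as a concatenation of words in the simple reflections, so that the chosen standard representative of $w$ is exactly the product of the standard representatives of $w_1$ and $w_2.$ Each generator $x_{\alpha_i}(1) x_{-\alpha_i}(-1) x_{\alpha_i}(1)$ has entries in $\o,$ so the representative of $w_2$ lies in the maximal compact $E_8(\o);$ since the section $f_s$ appearing in \eqref{sectionIntegral} is the normalized spherical vector, it is right $E_8(\o)$-invariant. Hence, for every $u,$
$$
f_s(wu) = f_s\bigl(w_1\,(w_2 u w_2^{-1})\,w_2\bigr) = f_s\bigl(w_1\,(w_2 u w_2^{-1})\bigr).
$$

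Next I would make the change of variables $v = w_2 u w_2^{-1}$ in \eqref{sectionIntegral}. Conjugation by the representative of $w_2$ normalizes $T$ and sends each root group isomorphically onto another, $x_{\beta_i}(r) \mapsto x_{w_2\beta_i}(\pm r);$ therefore it carries the (sub)variety $V,$ cut out by conditions of the permitted types on the coordinates $u_i,$ onto $V' := w_2 V w_2^{-1},$ cut out by conditions of the same types on the coordinates $v_i$ of the $U_{w_2\beta_i},$ and it preserves the additive Haar measure. Transporting the character along the same map, $\psi_{V'}(v) := \psi_V(w_2^{-1} v w_2)$ is again of the form $v \mapsto \psi\bigl(\sum_i c_i' v_i\bigr)$ with $c_i' = \pm c_i.$ Combining this with the displayed identity gives
$$
\int_V f_s(wu)\,\psi_V(u)\,du = \int_{V'} f_s(w_1 v)\,\psi_{V'}(v)\,dv,
$$
which is an integral of the same type \eqref{sectionIntegral} attached to the shorter word $w_1,$ to $V' = w_2 V w_2^{-1},$ and to $\psi_{V'}.$ The roots $w_2\beta_i$ are still pairwise distinct, and $w_1(w_2\beta_i) = w\beta_i \notin \Phi(P,T),$ so the new data is admissible and the remaining tools of section \ref{ss: tools} apply to it.

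There is no genuine obstacle here; the only points that need to be handled carefully are bookkeeping ones — that the standard representative of the suffix $w_2$ really does lie in $E_8(\o)$ (so that right $E_8(\o)$-invariance of the spherical section, rather than some weaker invariance, is what is being used), and that conjugation by this representative preserves the measure on $\prod_i U_{\beta_i}$ while sending the triple $(w, V, \psi_V)$ to an admissible triple $(w_1, V', \psi_{V'})$ of the same shape.
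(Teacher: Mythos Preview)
Your proposal is correct and matches the paper's intended approach: the paper states this lemma without proof, following it only with a remark that $w_2 x_{\beta_i}(u_i) w_2^{-1} = x_{w_2\beta}(\pm u_i)$ and that one absorbs the signs by a change of variables (possibly introducing signs into $\psi_V$). Your argument spells out precisely this, together with the observation that the standard representative of $w_2$ lies in $E_8(\o)$ so that right $E_8(\o)$-invariance of the spherical section disposes of the trailing $w_2.$
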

\begin{rmk}
In general $w_2 x_{\beta_i}(u_i) w_2^{-1}
= x_{w_2\beta}(\pm u_i).$  Normally, we make
changes of variables at the same time
to remove this signs.  This may introduce
signs into $\psi_V.$
\end{rmk}

If $S$ is any subset of $\Phi(G,T)$ which is closed
under addition, then the product of the groups
$U_{\alpha}, \; \alpha \in S,$ is a group, denoted $U_S.$
Note that $S=\Phi(U_S, T).$
  It will frequently be convenient
to describe a subgroup $U_S$ of $U_w$ by specifying the
complement of $\Phi(U_S, T)$  in $\Phi(U_w,T).$

\begin{proof}[Proof of first reduction theorem \ref{theorem: I(s,t) in terms of J(a,b,c)}]
Using lemma \ref{killing lemma}, one kills
\begin{equation}\label{first10killed}
   S=\left\{\bm  10100000
     ,10110000
     ,10111000
     ,10111100
     ,10111110,\\
     11110000
     ,11111000
     ,11111100
     ,11121000
     ,11221000\em\right\},
\end{equation}
deducing that
 $I(s, t),$
is equal to an integral of the same type,  a $61$-dimensional
subgroup $U'\subset U_0.$
let $S''$ be the
complement of $\{11111110,11121100,10111111,10000000\}$
in $S'.$
Number the elements of $S'',$ as
$\beta_1, \dots, \beta_{57},$ and
let
$$
U''= \{ x_{\beta_1}(u_{\beta_1}) \dots, x_{\beta_{57}}(u_{\beta_{57}}): u_i \in F, \; (1\le i\le 57)\}.
$$
Define
$$I\!I(s,t; m_1, m_2, m_3, n)=
\int\limits_{U''} f_s(w_\lng u x_{11111110}(m_2 )x_{11121100}(m_3 )x_{10111111}(m_1 )x_{10000000}(n ))
\psi_{U,t}(u) \, du.
$$
Thus
$$
I'(s,t) = \int\limits_F\int\limits_F\int\limits_F\int\limits_F
I\!I(s,t; m_1, m_2, m_3, n) \psi(m_1) \, dm_2\,dm_3\,dm_1\,dn.
$$
We then consider various cases, based
on the absolute values of $n, m_1, t_1,$ and $t_2.$
Write $I_{\o,\o}(s,t)$ for the integral over $n \in \o$
and $m_1\in \o,$ $I_{\o, F\ssm\o}(s,t)$ for the
integral over $n \in \o$ and $m_1\in F\ssm \o,$ and so
on, and $I_{F\ssm \o}$ for the integral
over $n \in F\ssm \o.$

Using lemma \ref{killing lemma} then lemma \ref{shortening lemma},
yields
$
I_{\o, \o} = J(1, t_2, t_1t_2).
$
\begin{prop}
One has
$$I_{F\ssm\o}(s,t) = \begin{cases}
q^{-51s+18} J(1,1,1), & t_1, t_2 \in \o^\times,\\
0, \text{ otherwise.}
\end{cases}$$
\end{prop}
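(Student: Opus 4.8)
The plan is to rerun the section-integral tools (Lemmas~\ref{obvious vanishing lemma}--\ref{shortening lemma}) on the region $n:=u_{10000000}\in F\ssm\o$, exploiting that $n$ is attached to the simple root $\alpha_1$ and carries no character. Since the character coefficient of $\alpha_1$ is $0$, Lemma~\ref{put p inverse} does not apply, so I would instead insert the rank-one Iwasawa identity
$$
x_{\alpha_1}(n)=x_{-\alpha_1}(n^{-1})\,\alpha_1^\vee(n)\,\dot w_1\,x_{-\alpha_1}(n^{-1}),
\qquad |n|>1,
$$
valid inside the copy of $SL_2$ generated by $U_{\pm\alpha_1}$ and $\alpha_1^\vee$. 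The rightmost $x_{-\alpha_1}(n^{-1})\in K$ is killed by right-$K$-invariance of the spherical section. Next I would move $x_{-\alpha_1}(n^{-1})$ and $\alpha_1^\vee(n)$ to the left through the remaining unipotent variables and then through $w_\lng$: because $n^{-1}\in\o$ the factor $x_{-\alpha_1}(n^{-1})$ stays integral and produces only positive-root commutator corrections, which are reabsorbed into the domain of integration — except for the handful of terms which, when conjugated by $w_\lng$, leave $P_2$ and must be dispatched by an auxiliary Iwasawa step; since $w_\lng\alpha_1<0$, $w_\lng$ itself carries $x_{-\alpha_1}(n^{-1})$ into a positive root group, hence into $P_2$, so it is swallowed by $f$. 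Conjugating $\alpha_1^\vee(n)$ across rescales the surviving coordinates (a measure Jacobian, a power of $q$ depending on $v(n)$) and, once pushed past $w_\lng$, contributes the value $\delta_{P_2}^s\!\big(w_\lng\alpha_1^\vee(n)w_\lng^{-1}\big)=|n|^{17s\langle\varpi_2,\,w_\lng\alpha_1^\vee\rangle}$; since $\delta_{P_2}$ corresponds to $17\varpi_2$ and $\langle\varpi_2,w_\lng\alpha_1^\vee\rangle=-3$, this is exactly the $q^{-51s}$ appearing in the answer.

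After this step $\dot w_1$ sits at the right end of the argument of $f$. I would carry it leftward via Lemma~\ref{shortening lemma}; it merges with $w_\lng$ into a strictly shorter Weyl word (as $w_\lng\alpha_1<0$) while permuting and rescaling the other root variables, and I would then rerun Lemmas~\ref{obvious vanishing lemma}, \ref{killing lemma}, \ref{tying lemma} on the resulting section integral. The decisive point is the dependence on $k:=-v(n)\ge 1$ introduced by conjugation by $\alpha_1^\vee(n)$: tracking it through the Weyl shortening, one finds — after identifying the relevant root — that Lemma~\ref{obvious vanishing lemma} annihilates every term with $k\ge 2$, so only $n\in p^{-1}\o^\times$ contributes. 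That term collapses, after the residual killings and shortenings, onto precisely the section integral defining $J(1,1,1)$, and the accumulated constant — the $\delta_{P_2}^s$-power at $k=1$, the rescaling Jacobian, and $\Vol(p^{-1}\o^\times)$ — multiplies out to $q^{-51s+18}$.

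For the case split: if either $t_1$ or $t_2$ fails to be a unit, the same shortening analysis leaves one of the character coefficients $t_2$ or $t_1t_2$ (attached to the roots $11232211$, $11222221$ in $\psi_{U,t}$) sitting in front of an unconstrained variable, so $I_{F\ssm\o}(s,t)=0$ by Lemma~\ref{obvious vanishing lemma}; this matches Theorem~\ref{theorem: I(s,t) in terms of J(a,b,c)}, where the $q^{-51s+18}J(1,1,1)$ summand occurs only in the both-units branch. The main obstacle is not conceptual but the length of the conjugation through the $71$-letter word $w_\lng$ in $E_8$, together with the commutator and Jacobian bookkeeping and the auxiliary Iwasawa steps needed whenever a commutator term exits $P_2$; as elsewhere in the paper I would organize this so that LiE and the \texttt{egut} package carry the mechanical parts, leaving as the only hand input the identification of the single root to be killed by Lemma~\ref{obvious vanishing lemma} in the $k\ge2$ and non-unit cases.
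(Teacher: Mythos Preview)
Your approach diverges from the paper's, and the step you lean on hardest---``Lemma~\ref{obvious vanishing lemma} annihilates every term with $k\ge 2$''---is not the actual mechanism, and you have not shown it holds.  In the paper the Iwasawa decomposition of $x_{\alpha_1}(n)$ is \emph{not} the first move.  Rather, one first splits on the variable $m_1$ attached to $10111111$ (the unique character root with $\langle\beta,\alpha_1^\vee\rangle\ne 0$): for $|m_1|\le 1$ the root $01111110$ kills $m_3$ and then Iwasawa on $n$ gives $J(1,t_2,t_1t_2)\!\int_{|n|>1}|n|^{-51s+17}\,dn$; for $|m_1|>1$ one uses Lemma~\ref{put p inverse} to fix $m_1=p^{-1}$, then Lemma~\ref{tying lemma} (with $\alpha=01111110$) ties $m_3=t_1t_2p^{-1}$, and \emph{then} Iwasawa on $n$ produces $-J(1,t_2,t_1t_2)\!\int_{|n|>1}|n|^{-51s+17}\psi(n^{-1}p^{-2}t_1t_2^2)\,dn$.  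Each piece is a full geometric series in $k=v(n^{-1})$; neither vanishes termwise by Lemma~\ref{obvious vanishing lemma}.  What happens is that the two series cancel for every $k\ge 2$ (since $n^{-1}p^{-2}t_1t_2^2\in\f o$ there), and for $k=1$ unless $t_1t_2^2\in\f o^\times$.  The surviving $k=1$ shell has volume $q-1$ and character integral $-1$, giving the $q^{-51s+18}$.

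So the two points you misidentify are: (i) the $k\ge2$ contribution dies by \emph{cancellation between the $|m_1|\le1$ and $|m_1|>1$ pieces}, not by a non-integral coefficient; and (ii) the vanishing when $t_1$ or $t_2$ is a non-unit comes from $\psi(n^{-1}p^{-2}t_1t_2^2)\equiv 1$ for all $k\ge 1$, which makes the two series identical, not from Lemma~\ref{obvious vanishing lemma} on a rescaled $t$-coefficient.  Your plan to push $w_1$ and $\alpha_1^\vee(n)$ through the $57+3$ remaining coordinates before touching $m_1$ also runs into the problem that $w_1$ sends $10111111,\,11111110,\,11121100$ into $\Phi(M_1,T)$, so you leave the class of $U$-integrals and must then control several auxiliary Iwasawa steps whose interaction with the character is exactly the $m_1$--$m_3$ coupling the paper isolates up front.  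Split on $m_1$ first; the rest then goes through with the tools you already cite.
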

\begin{proof}
The four terms in
$$x_{11111110}(m_2 )x_{11121100}(m_3 )x_{10111111}(m_1 )x_{10000000}(n )
$$
all commute with one another.  We rearrange
them as
$$x_{11111110}(m_2)x_{10000000}(n)x_{11121100}(m_3)x_{10111111}(m_1).
$$
And write
$I'_{F\ssm \o}(s,t)= I'_{F\ssm\o, \o}(s,t) + I'_{F\ssm \o, F\ssm o}(s,t),$ where
$$
\begin{aligned}
I'_{F\ssm\o, \o}(s,t)
&:= \int\limits_{F\ssm\o}\int\limits_\o\int\limits_F\int\limits_F
I\!I(s,t; m_1, m_2, m_3, n) \psi(m_1) \, dm_2\,dm_3\,dm_1\,dn\\
&= \int\limits_{F\ssm\o}\int\limits_F\int\limits_F
I\!I(s,t; 0, m_2, m_3, n) \, dm_2\,dm_3\,dn\\
I'_{F\ssm\o, F\ssm\o}(s,t)
&:= \int\limits_{F\ssm\o}\int\limits_{F\ssm\o}\int\limits_F\int\limits_F
I\!I(s,t; m_1, m_2, m_3, n) \psi(m_1) \, dm_2\,dm_3\,dm_1\,dn.
\end{aligned}
$$
In
$I'_{F\ssm\o, \o}(s,t),$ the root $01111110$
kills $11121100,$ and then, plugging in the Iwasawa
decomposition
of $x_{\alpha_1}(n)$ and simplifying
yields
$$\begin{aligned}
I'_{F\ssm\o, \o}(s,t)
&=
\int\limits_{F}
I\!I(s,t; 0, m_2, 0, 0)
\, dm_3\,
\int\limits_{F\ssm \o}
|n|^{-51s+17}
\, dn.\end{aligned}
$$
The first integral on the right hand side
 may also be obtained by killing one
 term in $I'_{\o,\o}.$  So it
 is equal to $I'_{\o,\o}$
 and hence to
 $J(1,t_2, t_1t_2).$

Next, lemma \ref{put p inverse} implies that
$$I_{F\ssm\o, F\ssm \o}(s,t)=
- \int\limits_{F\ssm\o}\int\limits_F\int\limits_F
I\!I(s,t; p^{-1}, m_2, m_3, n) \, dm_2\,dm_3\,dn.
$$
and then lemma \ref{tying lemma}
with $\alpha = 01111110$
yields
$$I_{F\ssm\o, F\ssm \o}(s,t)=
 \int\limits_{F\ssm\o}\int\limits_F
I\!I(s,t; p^{-1}, m_2, t_1t_2p^{-1}, n) \, dm_2\,dn.$$
Plugging in the Iwasawa decomposition for $x_{\alpha_1}(n)$ and simplifying gives
$$
\begin{aligned}
I_{F\ssm\o, F\ssm \o}(s,t)
&=-\int\limits_F
\int\limits_{U''} f_s(w_\lng u x_{11111110}(m_2 ))
\psi_{U,t}(u) \, du\, dm_2
\int\limits_{F\ssm\o}\psi\left(n^{-1} p^{-2} t_1 t_2^2\right)\,dn.
\end{aligned}
$$
As before, the integral on the left equals $J(1, t_2, t_1t_2).$
Now,
$$\int\limits_{|n|>1} |n|^{-51s+17} \psi(n^{-1} p^{-2} t_1 t_2^2 )
\,dn
= \int\limits_{|n|>1} |n|^{-51s+17}
\,dn,
$$
unless $t_1t_2^2 \in \o,$ in which case
$$\int\limits_{|n|>q} |n|^{-51s+17} \psi(n^{-1} p^{-2} t_1 t_2^2 )
\,dn
= \int\limits_{|n|>q} |n|^{-51s+17}
\,dn,
$$
while
$$\int\limits_{|n|=q} |n|^{-51s+17} \psi(n^{-1} p^{-2} t_1 t_2^2 )
\,dn
= -q^{-51s+17}.$$
Combined with the fact that
$$
\int\limits_{|n|=q} |n|^{-51s+17}
\,dn = q^{-51s+18}-q^{-51s+17},
$$
this yields the result.
\end{proof}

\begin{prop}
\begin{equation}
I_{\o, F\ssm \o}=
\begin{cases}
 0, & |t_1|= |t_2| =1, \\
 -J(p,p^{-1}t_2, p^{-1} t_1 t_2 )
q^{-34s+14}
+q^{-85s+35}J(1,p^{-2}t_2, p^{-2}t_1t_2), & |t_2|<1, \\
 -q^{-68s+26}J(1,1,p^{-2}t_1), & |t_2|=1, |t_1|< 1.
\end{cases}
\end{equation}
\end{prop}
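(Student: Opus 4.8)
The plan is to reduce $I_{\o, F\ssm\o}$ to the basic period $J(a,b,c)$ with the toolkit of section \ref{ss: tools} (Lemmas \ref{obvious vanishing lemma}--\ref{shortening lemma}), exactly as in the computations of $I_{\o,\o}$ and $I_{F\ssm\o}$, and then to obtain the three cases by specialising the absolute values of $t_1$ and $t_2$; together with the previous two propositions this completes the proof of Theorem \ref{theorem: I(s,t) in terms of J(a,b,c)}. On the locus $n\in\o$ the rightmost factor $x_{10000000}(n)$ lies in the maximal compact subgroup, hence is absorbed by the right $K$-invariance of $f_s$ and the $n$-integration contributes only $\operatorname{vol}(\o)=1$; so we may set $n=0$. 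The variable $m_1$, attached to the root $10111111$, is then the only one constrained by $|m_1|>1$, and its coefficient in $\psi_{U,t}$ is $1\in\o^\times$. The roots carrying a nonzero $\psi_{U,t}$-coefficient are precisely $10111111,\,12232210,\,12232111,\,11233210,\,11232211,\,11222221$, and they satisfy the sufficient condition noted after Lemma \ref{put p inverse}; so that lemma applies and replaces the $m_1$-integration by $(-1)$ times the value at $m_1=p^{-1}$.

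With $x_{10111111}$ frozen at $p^{-1}$, conjugate an auxiliary $x_{01111110}(z)$ from right to left --- one checks in LiE that $w_\lng$ carries $01111110$ into $\Phi(P_2,T)$, so this element is eventually absorbed on the left. The only commutators it meets en route that involve a root with a nonzero $\psi_{U,t}$-coefficient are $01111110+10111111=11222221$ (coefficient $t_1t_2$, contributing the constant $t_1t_2p^{-1}$) and $01111110+11121100=12232210$ (coefficient $1$, multiplied by $m_3$). Integrating $z$ over $\o$ then forces $m_3\equiv c\,t_1t_2p^{-1}\pmod{\o}$ for a fixed sign $c$, and a change of variables absorbing the $\o$-ambiguity turns this into the tying maneuver of Lemma \ref{tying lemma}, which freezes $m_3$ at $t_1t_2p^{-1}$ (up to a harmless sign). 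These two steps recapitulate the treatment of $I_{F\ssm\o, F\ssm\o}$ in the proof of the previous proposition.

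With $m_1$ and $m_3$ now frozen, reduce the long Weyl word $w_\lng$ to the word $w$ appearing in \eqref{J} by Lemma \ref{shortening lemma}. Conjugating the excess word across the two frozen root subgroups and across the surviving variables produces the Iwasawa- and measure-change contributions accounting for the powers $q^{-34s+14}$, $q^{-68s+26}$, $q^{-85s+35}$ --- as in the $I_{F\ssm\o}$ computation these arise from the $\delta_{P_2}^s$-factors together with $|n|$-type integrals --- and transports the frozen data into the slots of $J(a,b,c)$ attached to the roots $01000000$, $00001110$, $00000111$; where a frozen entry has become unbounded, a second application of Lemma \ref{put p inverse} is required, and this is what produces the correction term. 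Now specialise. When $|t_1|=|t_2|=1$ the frozen entry is a unit times $p^{-1}$; after the shortening it renders $\psi_{U,t}$ nontrivial on a surviving compact variable, so the integral vanishes by Lemma \ref{obvious vanishing lemma}. When $|t_2|=1$ and $|t_1|<1$ only $-q^{-68s+26}J(1,1,p^{-2}t_1)$ survives. When $|t_2|<1$ one obtains the main term $-q^{-34s+14}J(p,p^{-1}t_2,p^{-1}t_1t_2)$ together with the correction $q^{-85s+35}J(1,p^{-2}t_2,p^{-2}t_1t_2)$.

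The substantive difficulty is combinatorial rather than conceptual: one must verify which sums of $E_8$-roots are again roots (so that the commutator identities take the form used above), check that $01111110$ and the surviving variables transform as claimed under $w_\lng$ and under the excess Weyl word, and keep precise track of signs and of the $q$-exponents through the successive applications of Lemmas \ref{put p inverse}, \ref{tying lemma} and \ref{shortening lemma}. As with the other reductions in this section these verifications are finite but bulky, and are most safely carried out with LiE \cite{L} and the egut routine DCA3 \cite{egut}.
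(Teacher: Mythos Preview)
Your proposal follows essentially the same approach as the paper, which simply states that the proposition ``follows from the same type of arguments using the lemmas from section \ref{ss: tools}'' and omits all details. Your sketch --- absorbing $n\in\o$ into $K$, applying Lemma \ref{put p inverse} to freeze $m_1=p^{-1}$, tying $m_3$ via $\alpha=01111110$ as in the treatment of $I_{F\ssm\o,F\ssm\o}$, then shortening and splitting into cases on $|t_1|,|t_2|$ --- is a reasonable elaboration of those omitted details, though the parenthetical reference to ``$|n|$-type integrals'' is slightly misleading here since $n$ has already been absorbed and the $q$-exponents instead arise from Iwasawa decompositions of the frozen elements.
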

\begin{proof}
This follows from the same type of arguments
using the lemmas from section \ref{ss: tools}.
We omit the details.
\end{proof}
Assembling the pieces we obtain the theorem.
\end{proof}

\subsection{Second Reduction}
\label{ss: J to J'}
\begin{prop}\label{prop J in terms of J'}
Let $f$ be the normalized spherical vector
in the induced representation attached to the
character
$$
\prod_{i=1}^8 \alpha_i^\vee(t_i)
\mapsto
\prod_{i=1}^8 |t_i|^{s_i},\quad\text{ with }$$
$$[s_1, \dots, s_8] =
[17s-6,17s-6,17s-6,-34s+14,17s-6,-17s+7,17s-6,17s-5],$$
and let
$$J_0(b,c) :=
\int_{U_{w[57687]}}
f(w[57687]u)
\psi\left(b u_{00001110} +c u_{00000111} \right)
\,du.
$$
Then
$$
J(a,b,c) =
\frac{\zeta(17s-6)\zeta(17s-7)}{
\zeta(17s-4)\zeta(17s-3)\zeta(17s-2)\zeta(17s)\zeta(34s-10)}
(1-|a|^{17s-7}q^{-17s+7})J_0(b,c).
$$
\end{prop}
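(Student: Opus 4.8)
The plan is to reduce the $71$-dimensional integral $J(a,b,c)$ over $U_w$ for the long Weyl word $w = w[243154234565423145765423187]$ to the much shorter integral $J_0(b,c)$ over $U_{w[57687]}$, by repeatedly applying the section-integral toolkit of section \ref{ss: tools}. The key structural observation to exploit is that $w$ factors as $w = w' \cdot w[57687]$ for some $w'$ (indeed one expects $w' = w[2431542345654231457642318]$ or a minor variant), so that lemma \ref{shortening lemma} lets us conjugate the tail $w[57687]$ to the left in stages while tracking how the roots in $\Phi(U_w, T)$ and the character $\psi$ transform.

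\medskip

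\textbf{First steps.} First I would organize $\Phi(U_w,T)$ into the roots that $w[57687]$ moves versus those it fixes, and peel off the $w[57687]$ factor. The roots $00001110$ and $00000111$ carrying the $b$- and $c$-components of $\psi$ should be precisely the ones surviving into $U_{w[57687]}$, while the $a u_{01000000} + u_{00100000} + u_{10000000}$ part of the character lives in the complementary piece and gets absorbed into the normalizing zeta factors. Next, for the bulk of the $\approx 65$ remaining integration variables I would apply lemma \ref{obvious vanishing lemma} to discard variables whose character coefficient is a nonunit, lemma \ref{killing lemma} to eliminate ``killable'' roots $\beta$ whose commutator with a conjugated simple-root subgroup $U_\alpha$ feeds back additively (this is the workhorse that collapses most of the dimension), and lemma \ref{tying lemma} to pin down the remaining free variables to linear expressions. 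The basic split-and-Iwasawa technique of section \ref{ss: tools} handles whatever is left, each split lowering $N$. The rearrangement condition \eqref{rearrangeable} must be checked at each stage so we stay inside the simple class of integrals.

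\medskip

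\textbf{Producing the explicit factors.} The three ``$\zeta$ over $\zeta$'' type contributions and the factor $\bigl(1-|a|^{17s-7}q^{-17s+7}\bigr)$ should emerge as follows. The denominator $\zeta(17s-4)\zeta(17s-3)\zeta(17s-2)\zeta(17s)\zeta(34s-10)$ records geometric-series sums $\sum_{k\ge 0} |u|^{(\,\cdot\,)s - (\,\cdot\,)}$ coming from Iwasawa splits on five specific roots (four ``$17s - j$'' roots with $j \in \{0,2,3,4\}$ and one ``$34s - 10$'' root corresponding to a root with $n_2 = 2$), and the matching numerator $\zeta(17s-6)\zeta(17s-7)$ comes from two more such sums; the mismatch between numerator and denominator is exactly the exponent shift caused by conjugating $w[57687]$ past these roots. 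The $a$-dependence is isolated on the single variable $u_{01000000}$: integrating it against $\psi(a u_{01000000})$ after an Iwasawa split on the root $01000000$ produces $\int_{|u|\le 1} + \int_{|u|>1}\psi(a u)|u|^{17s-7}\,du$, and the second integral (using $\psi$ trivial on $\o$ but not beyond, and that $a$ has a definite valuation) contributes $-|a|^{17s-7}q^{-17s+7}$ when $|a|\le 1$ — giving the stated factor. Finally I would read off that the residual integrand over $U_{w[57687]}$ is $f(w[57687]u)\psi(b u_{00001110} + c u_{00000111})$ with $f$ the spherical vector on the induced representation for the shifted character $[s_1,\dots,s_8]$, matching the definition of $J_0(b,c)$; the shift $[17s-6,17s-6,17s-6,-34s+14,17s-6,-17s+7,17s-6,17s-5]$ is just the original inducing character conjugated by the discarded Weyl prefix $w'$.

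\medskip

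\textbf{Main obstacle.} The hard part will be the bookkeeping: verifying for each of the $\approx 65$ roots in $\Phi(U_w,T)\setminus\Phi(U_{w[57687]},T)$ which lemma applies, that the hypotheses (the additive-feedback form in lemmas \ref{killing lemma} and \ref{tying lemma}, the cocharacter condition in lemma \ref{put p inverse}, condition \eqref{rearrangeable}) genuinely hold with the correct signs and $\o$-integral versus $\o^\times$-integral ranges, and that no cross-terms are dropped when commuting root subgroups past one another. This is exactly the kind of large but mechanical verification the authors flag as software-assisted in the introduction; any single step is hand-checkable, but getting the five-plus-two zeta factors and the precise exponents in the character shift right requires care, and is where I would expect the genuine difficulty (and any risk of error) to lie.
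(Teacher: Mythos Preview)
Your plan differs substantially from the paper's argument, and while a root-by-root attack is not impossible in principle, you are missing the two structural observations that make this proposition nearly immediate.

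First, a correction: $U_w$ here has dimension $\ell(w)=27$, not $71$; you are conflating it with $U_0'$ from the earlier first reduction. So there are only $22$ roots to eliminate, not $\sim 65$.

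More importantly, the paper does not use the section~\ref{ss: tools} toolkit for this proposition at all. Instead:
\begin{enumerate}
\item It factors $w = w''w'$ with $w''=w[243154234654237654]$ (length $18$) and $w'=w[131257687]$ (length $9$), arranged so that the entire character $\psi(u_{\alpha_3}+au_{\alpha_2}+u_{\alpha_1}+bu_{00001110}+cu_{00000111})$ is supported in $U_{w'}$. The integration over the $w''$ piece is then \emph{characterless}: it is the standard intertwining operator $M(s,w'')$, and the Gindikin--Karpelevich formula gives it immediately as $\frac{\zeta(17s-6)^4\zeta(34s-13)}{\zeta(17s-4)\zeta(17s-3)\zeta(17s-2)\zeta(17s)\zeta(34s-10)}$ times the spherical vector $f$ in the shifted principal series. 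Your picture of the denominator as coming from ``Iwasawa splits on five specific roots'' is not right; it is a telescoping product over all $18$ roots of $U_{w''}$.
\item The remaining element $w'$ and $U_{w'}$ sit inside the standard Levi with derived group $SL_3\times SL_2\times SL_5$ (on simple roots $\{\alpha_1,\alpha_3\}$, $\{\alpha_2\}$, $\{\alpha_5,\alpha_6,\alpha_7,\alpha_8\}$), and $w'=w[131]\,w[2]\,w[57687]$ factors accordingly. The character splits along this decomposition, so the integral is a product: the $SL_3$ Jacquet integral $\zeta(17s-6)^{-2}\zeta(34s-13)^{-1}$, the $SL_2$ Jacquet integral $\frac{\zeta(17s-7)}{\zeta(17s-6)}(1-|a|^{17s-7}q^{-17s+7})$, and the $SL_5$ piece, which is exactly $J_0(b,c)$.
\end{enumerate}
Multiplying gives the proposition in one line. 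Your brute-force plan would, if carried out carefully, rediscover Gindikin--Karpelevich and Casselman--Shalika by hand over $22$ roots; the paper's route bypasses all of that bookkeeping by recognizing the intertwining operator and the Levi factorization up front.
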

\begin{proof}
First,
write
$w[243154234565423145765423187]=
w''w'$ with
$w''=w[243154234654237654], w'=w[131257687].$
We have
\begin{equation}
\label{J as integral with M(s,w'')}
J(a,b,c)
= \int_{U_{w'}}
(M(s, w'')f_s)(w'u)\psi\left(u_{00100000} +a u_{01000000} +u_{10000000} +b u_{00001110} +c u_{00000111} \right)\,du,
\end{equation}
where
$$
M(s,w'')f_s(g)= \int_{U_{w''}} f_s(w'' u g) \, du
$$
is the standard intertwining operator.  By the Gindikin-Karpelevich formula %?
we have
$$
M(s,w'')f_s= \frac{\zeta(17s-6)^4\zeta(34s-13)}{
\zeta(17s-4)\zeta(17s-3)\zeta(17s-2)\zeta(17s)\zeta(34s-10)}f.
$$
Next, observe that $w'$ and $U_{w'}$ are contained
in the standard Levi subgroup of $E_8$ which has
derived group isomorphic to $SL_2 \times SL_3
\times SL_5.$
One may factor $w'$ as $w' = w[131]w[2]w[57687],$
and also factor the integral in \eqref{J as integral with M(s,w'')} into a product of $3$ simpler integrals
corresponding to the three components of the Levi and the three factors of $w'.$  The integrals corresponding
to $w[131]$ and $w[2]$ are Jacquet integrals, equalling
$\zeta(17s-6)^{-2}\zeta(34s-13)^{-1}$
and $\frac{\zeta(17s-7)}{\zeta(17s-6)}(1-|a|^{17s-7}q^{-17s+7}),$ respectively.
The proposition follows.\end{proof}

\subsection{An $SL_5$ period}
\label{ss: Calc of J'}
From Theorem \ref{theorem: I(s,t) in terms of J(a,b,c)}
and Proposition \ref{prop J in terms of J'}.
 the computation
 of $I(s,t),$ is reduced to the computation
 of an integral $J_0(b,c)$ over a unipotent subgroup of
 the copy of
$SL_5$ in $E_8$ which is generated by $x_{\pm\alpha_5};\ x_{\pm\alpha_6};\
x_{\pm\alpha_7}$ and $x_{\pm\alpha_8}$.
In this section we compute
$J_0(b,c).$

\begin{prop}
The function $J_0(b,c)$ depends only
on the $\frak{p}$-adic valuations of $b$ and $c.$
Take  integers $B$ and $C$ with $B \le C,$ and take $E$ equal to either an integer or $\infty.$
Define  $J_2(B,C,E)$  to be
$$\begin{aligned}
\frac{(1-xq^6)(1-(xq^7)^{B+1})}{(1-xq^7)^2(1-x^2q^{13})}
\begin{cases}
(1-x^2q^{12})(1-xq^6)-(1-xq^5)(1-x^2q^{13})(xq^7)^{C+1}&\min(B,C,E) \ge 0\\
\qquad\qquad +(1-q^{-1})(xq^6)(1-xq^7)
(x^2q^{13})^{C+1}, &\qquad  E \ge C, \\
(1-x^2q^{12})(1-xq^6)(1-(x^2q^{13})^{E+1})&\min(B,C,E) \ge 0,\\\qquad
-
(xq^7)^{C+1}(1-xq^5)(1-x^2q^{13})(1-(xq^6)^{E+1}),& \qquad E < C,\\
0& \min(B,C,E) < 0.
\end{cases}\end{aligned}
$$
where $x=q^{-s}.$
Then
for all $b,c \in F$ with $|b|=q^{-B}$ and $|c|=q^{-C},$
\begin{equation}\label{summations}
\begin{aligned}J_0(b,c)=&
 J_2(B,C,\infty )+ (1-q^{-1})\sum_{\ell=1}^B
 J_2(B-\ell, C-\ell, \infty) (xq^8)^{\ell}
 +(1-q^{-1})\sum_{k=1}^B(x^2q^{13})^k J_2(B-k,C,\infty)
 \\
&
 + (1-q^{-1})^2\sum_{k=1}^B
 \left[\sum_{\ell=0}^{k-1}
 J_2(B-k,C,C-k+\ell)q^{-\ell}+
\sum_{\ell=1}^{B-k}
 J_2(B-k-\ell, C-\ell, C-k-\ell) (xq^8)^{\ell} \right].
\end{aligned}\end{equation}
\end{prop}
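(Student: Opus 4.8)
The plan is to compute $J_0(b,c)$ directly by the recursive ``section integral'' technique reviewed in section \ref{ss: tools}, exploiting the fact that both $w[57687]$ and $U_{w[57687]}$ lie inside the copy of $SL_5$ generated by $x_{\pm\alpha_5},\dots,x_{\pm\alpha_8}$. First I would record that under the identification $\alpha_5\mapsto e_1-e_2,\ \alpha_6\mapsto e_2-e_3,\ \alpha_7\mapsto e_3-e_4,\ \alpha_8\mapsto e_4-e_5$, the two roots carrying the character, $00001110$ and $00000111$, become $e_1-e_4$ and $e_2-e_5$, and $w[57687]$ becomes a length-$5$ element whose inversion set is five-dimensional; thus $J_0(b,c)$ is literally a section integral on $SL_5$ attached to the explicit inducing character of Proposition \ref{prop J in terms of J'} (with $x$ as in the statement). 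The restriction of a normalized spherical section to this Levi is again spherical, so all the hypotheses of the lemmas of section \ref{ss: tools} are available.

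Next I would show, by absorbing units into $b$ and $c$ through torus changes of variable, that $J_0(b,c)$ depends only on $B=v(b)$ and $C=v(c)$; since in every application (via Theorem \ref{theorem: I(s,t) in terms of J(a,b,c)}) one has $b,c$ with $v(b)\le v(c)$, it suffices to treat, and I would only prove, the range $B\le C$. The heart of the argument is then the recursion: fix an ordering of the roots of $U_{w[57687]}$ compatible as far as possible with condition \eqref{rearrangeable}, split the outermost integration variable according to $|u|\le 1$ or $|u|>1$, and on each branch insert the Iwasawa decomposition and apply Lemma \ref{killing lemma}, Lemma \ref{tying lemma}, Lemma \ref{put p inverse} and Lemma \ref{shortening lemma} as appropriate, discarding branches on which the resulting additive character has non-integral conductor by Lemma \ref{obvious vanishing lemma}. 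Each branch either collapses to a smaller section integral of the same shape, or contributes a geometric factor of the form $(xq^8)^{\ell}$ or $(x^2q^{13})^{k}$ coming from integrating a variable over the shell $|u|=q^{\ell}$ (with a leftover $(1-q^{-1})$ from $\mathfrak o^{\times}$), the upper limit of the corresponding sum — always $B$ — being forced by a $\psi$-conductor condition.

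After peeling off the variables not coupled to the character and those that can be killed outright, the residual integral is a section integral over a lower-dimensional unipotent subgroup depending on three cut-offs: $B$ and $C$ descending from $|b|$ and $|c|$, and a third integer $E$ that appears as the valuation of a variable which is itself integrated out at a later stage of the recursion. I would establish the closed form $J_2(B,C,E)$ for this integral by the same recursive method on the smaller unipotent; the three case-splits in the definition of $J_2$ — $\min(B,C,E)\ge 0$ with $E\ge C$, with $E<C$, and $\min(B,C,E)<0$ — correspond respectively to the two ways the relevant conductor condition can hold and to the vanishing produced by Lemma \ref{obvious vanishing lemma}. Assembling the branches of the outer recursion then yields \eqref{summations}: the leading term is $J_2(B,C,\infty)$; the two single sums (each with one factor $(1-q^{-1})$) come from the two independent shell-integrations; and the double sum with $(1-q^{-1})^2$ records the branch on which two shell-integrations are nested, the inner one also moving the third argument of $J_2$ to a finite value $C-k+\ell$ or $C-k-\ell$.

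I expect the main obstacle to be organizational rather than conceptual: keeping the integral inside the tractable class at every split (which constrains the admissible orderings of the roots and may force extra applications of Lemma \ref{shortening lemma}), tracking precisely which structure constants and signs enter $\psi$ under each conjugation, and correctly pinning down the stage at which the parameter $E$ is born and the reason its range breaks at $E=C$. The self-contained subcomputation of $J_2(B,C,E)$, though smaller, needs its own careful recursion, and it is there that the interaction of the three cut-offs — hence the somewhat intricate shape of the formula — is really settled.
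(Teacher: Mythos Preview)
Your proposal is correct and follows essentially the same route as the paper: both compute $J_0(b,c)$ as a five-dimensional section integral inside $SL_5$ by recursively peeling off one-parameter subgroups via the Iwasawa decomposition, isolating an intermediate three-parameter integral $J_2(B,C,E)$, and assembling the shell contributions into the sums \eqref{summations}. The paper's execution is slightly more concrete than your outline---it names the successive reductions $\hat J_1,\hat J_2,\hat J_4$, observes that the $x_5$-integration is an $SL_2$ Jacquet integral (giving the factor $(1-(xq^7)^{B+1})$ directly), and computes $\hat J_4$ in closed form rather than by further recursion---but the logic and the source of each term in \eqref{summations} are exactly as you describe.
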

\begin{proof}
Define
$$
n^-(x_1,x_2,x_3,x_4,x_5):=
x_{\alpha_5+\alpha_6+\alpha_7}(x_5)
x_{\alpha_6+\alpha_7+\alpha_8}(x_4)
x_{\alpha_7+\alpha_8}(x_3)
x_{\alpha_6+\alpha_7}(x_2)
x_{\alpha_7}(x_1)\in E_8.
$$
This gives an explicit parametrization
of the group $U_{w[57687]}.$
Next, define
$$
\hat J_1(b,c,e)=
\int_{F^4}
f_\chi(n^-(0,x_2,x_3,x_4,x_5)
\psi(bx_5+cx_4+ex_2x_3) \, dx
$$
$$
\hat J_2(b,c,e)=
\int_{F^4}
f_\chi(n^-(0,0,x_3,x_4,x_5))
\psi(bx_5+cx_4+ex_3) \, dx
$$
$$
\hat J_4(c,e) =\int_{F^2}f_\chi(n^-(0,0,x_3,x_4,0))
\psi(cx_4+ex_3) \, dx.
$$
Then by plugging in the Iwasawa decompositions
of $x_{\alpha_7}(x_1)$ and then $x_{\alpha_6+\alpha_7}(x_2),$ one finds that
\begin{equation}\label{J0->J1->J2}\begin{aligned}
J_0(b,c)&= \hat J_1(b,c,0)
+ \int_{F\ssm \f o}
\hat J_1(bx_1,b,bx_1)
|x_1|^{-34s+13}
\, dx_1
\\
\hat J_1(b,c,e) &=
\int_{\f o} \hat J_2(b,c,x_2)
\, dx_2 + \int_{F\ssm \f o}
\hat J_2( bx_2, cx_2, ex_2)|x_2|^{-17s+7}
\, dx_2 .
\end{aligned}\end{equation}
Moreover, if $1_{\f o}$ is the characteristic
function of $\f o,$ then
\begin{equation}\label{J4 to J2}
\hat J_2
(b,c,e)=\frac{\zeta(17s-7)}{\zeta(17s-6)}(1-|b|^{17s-7}q^{-17s+7})\hat J_4(c,e) \cdot 1_{\f o}(b).
\end{equation}
(The integration in $x_5$
 amounts to an $SL_2$ Jacquet integral.)
 Likewise $$\begin{aligned}\hat J_4(c,e)
 =
   \frac{\zeta(17s-7)}{\zeta(17s-6)}&\left(1_{\f o}(c)
 \int\limits_{\f o}
(1-|c|^{17s-7}q^{-17s+7})\psi(ex_3)\, dx_3 \right.\\
&\qquad\left.
+ \int\limits_{F\ssm \f o}1_{\f o}(cx_3)
(1-|cx_3|^{17s-7}q^{-17s+7})\psi(ex_3)|x_3|^{-34s+12}
\, dx_3
\right).\end{aligned}
 $$
 At this point it is clear that $\hat J_4,$ and hence all of the other integrals, depend only on the absolute values,
 or, equivalently, $\f p$-adic valuations, of their arguments.
Introducing the notation $x:=q^{-17s},$ we have$$
 \hat J_4(p^C,p^E) = 1_{\f o}(c)1_{\f o}(e)
  \frac{1-xq^6}{1-xq^7}\left(
(1-(xq^7)^{C+1})
+ \sum_{m=1}^{C}
(1-(xq^7)^{C-m+1})(x^2q^{12})^m
\int_{|x_3|=q^m}
\psi(ex_3)
\, dx_3\right).
 $$
 Denote this quantity by $J_4(C,E).$
Recall that for any $e \in F$ with $|e|=q^{-E},$
 $$\int_{|x_3|=q^m}
\psi(ex_3)
\, dx_3
=\begin{cases}
(1-q^{-1}) q^m, & m \le E, \\
-q^E, & m = E+1, \\
0, & m > E+1.
\end{cases}$$
It follows that $J_4(C,E)=0$ if either $C$ or $E$ is
negative, and that otherwise
$$\begin{aligned}
&J_4(C,E)
=
\frac{1}{(1-xq^7)(1-x^2q^{13})}
\begin{cases}
(1-x^2q^{12})(1-xq^6)-(1-xq^5)(1-x^2q^{13})(xq^7)^{C+1}&\\
\qquad\qquad +(1-q^{-1})(xq^6)(1-xq^7)
(x^2q^{13})^{C+1}, & E \ge C, \\
(1-x^2q^{12})(1-xq^6)(1-(x^2q^{13})^{E+1})&\\\qquad
-
(xq^7)^{C+1}(1-xq^5)(1-x^2q^{13})(1-(xq^6)^{E+1}),& E < C.
\end{cases}\end{aligned}
$$
It follows from \eqref{J4 to J2} that
  $$
 \hat J_2(b,c,e) = \frac{1-xq^6}{1-xq^7}
(1-(xq^7)^{B+1})J_4(C,E)1_{\f o}(b)
 =J_2(B,C,E),
 $$
 whenever $b,c\in F^\times, e \in F$ have valuations
 $B,C,E$ respectively (with the convention that
 the valuation of $0$ is $\infty$).
 Plugging in to \eqref{J0->J1->J2}, and
 using the fact that $B \le C$ and that the volume of $\{ y \in F: |y|=q^k \}$ is $q^k(1-q^{-1})$
for each $k\in \Z,$ gives the result.
 \end{proof}

Now,  take $X = (X_1, \dots, X_6)$ to be a sextuple of
indeterminates,
 and consider the ring  $R_1:=\C(x,q)[X]$
 of polynomials in $X$ with coefficients in
 the field $\C(x,q)$ of rational functions.  We define two elements of this ring by
$$
\mathcal{J}_2^1(X)=
(1-X_1X_2^7)\left(
\begin{aligned}
(1-x^2q^{12})(1-xq^6)
&-(1-xq^5)(1-x^2q^{13})X_3X_4^7
\\&+(1-q^{-1})(xq^6)(1-xq^7)
X_3^2X_4^{13}
\end{aligned}
\right)
$$
$$
\mathcal{J}_2^2(X)=
(1-X_1X_2^7)\left(
(1-x^2q^{12})(1-xq^6)(1-X_5^2X_6^{13})
-
X_3X_4^7(1-xq^5)(1-x^2q^{13})(1-X_5X_6^6)
\right),
$$
then we have
$$
J_2(B,C,E)
=
\frac{(1-xq^6)}{(1-xq^7)^2(1-x^2q^{13})}
 \begin{cases}
\mathcal{J}_2^1(x^{B+1},q^{B+1},x^{C+1},q^{C+1}, x^{E+1},q^{E+1}), \; E \ge C, \\
\mathcal{J}_2^2(x^{B+1},q^{B+1},x^{C+1},q^{C+1}, x^{E+1},q^{E+1}), \; E <C. \\
\end{cases}
$$

\label{section with J polynomials and T operators}
Let $R_2$ be the ring of Laurent polynomials
$\C(x,q)[X_1,X_2,X_3,X_4,X_1^{-1},X_2^{-1}].$
Then $\mathcal{J}_2^1 \in R_2,$ and
  each of the four summations above
corresponds to a linear operator
$R_1\to R_2.$
For example,
$$\begin{aligned}&
\sum_{\ell=1}^B
(x^{n_1}q^{n_2})^{B+1-\ell}
(x^{n_3}q^{n_4})^{C+1-\ell} (xq^8)^{\ell}\\&\qquad \qquad
=
(x^{n_1}q^{n_2})^{B+1}
(x^{n_3}q^{n_4})^{C+1}
\frac{x^{1-n_1-n_3}q^{8-n_2-n_4}-(x^{1-n_1-n_3}q^{8-n_2-n_4})^{B+1}}{1-x^{1-n_1-n_3}q^{8-n_2-n_4}},
\end{aligned}
$$for all $n_1, \dots n_4 \in \Z$ with
$(n_1+n_3, n_2+n_4) \ne (1,8).$
It follows that
$$
\sum_{\ell=1}^B
 J_2(B-\ell, C-\ell, \infty)
 = [T_1.\mathcal{J}_{2}^{1}](x^{B+1},q^{B+1}, x^{C+1}, q^{C+1}),
$$
where $T_1$ is the $\C(x,q)$-linear map
$R_1 \to R_2$ defined on monomials by
$$
T_1\left(
\prod_{i=1}^6 X_i^{n_i}\right)
= \prod_{i=1}^4 X_i^{n_i} (x^{1-n_1-n_3}q^{8-n_2-n_4}
- X_1^{1-n_1-n_3}Q_1^{8-n_2-n_4}).
$$
In similar fashion, we can define operators
corresponding to the other three summations
in \eqref{summations}.
Specifically, if
$$
T_2\left( \prod_{i=1}^6 X_i^{n_i}\right)
= \prod_{i=1}^4 X_i^{n_i} \frac{x^{2-n_1}q^{13-n_2}-X_1^{2-n_1}X_2^{13-n_2}}{1-x^{2-n_1}q^{13-n_2}},$$
$$\begin{aligned}
&T_3\left(\prod_{i=1}^6 X_i^{n_i}\right)
= \prod_{i=1}^4 X_i^{n_i}\cdot X_3^{n_5} X_4^{n_6}
\left[
\frac{x^{2-n_1-n_5}q^{14-n_2-n_6}}{(1-x^{2-n_1-n_5}q^{14-n_2-n_6})(1-x^{2-n_1}q^{13-n_2})}\right.\\&\left.
+\frac 1{1-x^{n_5}q^{n_6-1}}
\times \left( \frac{
-X_1^{2-n_1-n_5}X_2^{14-n_2-n_6}}{1-x^{2-n_1-n_5}q^{14-n_2-n_6}}
+\frac{
X_1^{2-n_1}X_2^{13-n_2}}{1-x^{2-n_1}q^{13-n_2}}
\right)\right]\end{aligned}
$$
$$
\begin{aligned}
&T_4\left(\prod_{i=1}^6 X_i^{n_i}\right)
= \prod_{i=1}^4 X_i^{n_i}\cdot X_3^{n_5} X_4^{n_6}
\left(
\frac{x^{3-2n_1-n_3-2n_5}q^{22-2n_2-n_4-2n_6}}
{(1-x^{1-n_1-n_3-n_5}q^{8-n_2-n_4-n_6})
(1-x^{2-n_1-n_5}q^{14-n_2-n_6})}\right.\\
&\left.
-\frac{x^{1+n_3}q^{6+n_4}X_1^{1-n_1-n_3-n_5}X_2^{8-n_2-n_4-n_6}}{(1-x^{1-n_1-n_3-n_5}q^{8-n_2-n_4-n_6})(1-x^{1+n_3}q^{6+n_4})}
+\frac{X_1^{2-n_1-n_5}X_2^{14-n_2-n_6}}
{(1-x^{2-n_1-n_5}q^{14-n_2-n_6})(1-x^{1+n_3}q^{6+n_4})}
\right).\\
\end{aligned}
$$
Then $J_0(p^B,p^C) = \mathcal{J}_0(x^{B+1}, q^{B+1}, x^{C+1},q^{C+1}),$ where
$$
\mathcal{J}_0=\frac{1-xq^6}{(1-xq^7)^2(1-x^2q^{13})}[\mathcal{J}_2^1+(1-q^{-1})(T_1+T_2).\mathcal{J}_2^1+(1-q^{-1})^2(T_3+T_4).\mathcal{J}_2^1].
$$
\begin{prop}  The value of
$\c J_0$
is given by
$$\begin{aligned}
\frac{(1-xq^6)(1-x^2q^{12})}{(1+xq^7)^3(1-x^2q^{13})}\left(
\frac{(1-xq^6)(1-x^2q^{13})}{
   \left(1-q^8 x\right)}-\frac{X_1X_2^8
   \left(1-xq^5\right)\left(1-x^2q^{14}\right)}{
   \left(1-xq^8 \right)}+
(1-q^{-1}) xq^6  X_1^2 X_2^{14}
   \right)\\
   +X_3X_4^7(1 - q^5 x) \left(
   -(1 + q^6 x) (1 - q^7 x)X_2q^{-1}
   +  (1 - q^{13} x^2) q^{-1} X_1X_2^7
   -(1 - q^{-1}) q^6 x X_1^2 X_2^{14}
   \right)\\
   +X_3^2X_4^{13} \frac{(1 - q^{-1})
}{x q^7 (1 - q^8 x)} \left((1 - q^8 x) (1 +  x^2q^{12} -  x^2q^{13} -  x^3q^{19})X_2\right.\\\left. \qquad+ x q^8(1 -  x q^4-  x q^6+  x^2 q^{11}+  x^2 q^{12}-  x^2q^{13})X_1X_2^8\right)
   \end{aligned}
$$
\end{prop}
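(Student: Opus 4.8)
The plan is to prove the formula by carrying out the computation set up in the paragraph just before the statement, namely by evaluating
$$\mathcal{J}_0=\frac{1-xq^6}{(1-xq^7)^2(1-x^2q^{13})}\left[\mathcal{J}_2^1+(1-q^{-1})(T_1+T_2).\mathcal{J}_2^1+(1-q^{-1})^2(T_3+T_4).\mathcal{J}_2^1\right].$$
First I would expand $\mathcal{J}_2^1$ into its six monomials in $X_1,X_2,X_3,X_4$. From the factored form
$$\mathcal{J}_2^1=(1-X_1X_2^7)\bigl((1-x^2q^{12})(1-xq^6)-(1-xq^5)(1-x^2q^{13})X_3X_4^7+(1-q^{-1})(xq^6)(1-xq^7)X_3^2X_4^{13}\bigr)$$
one reads off these monomials as $1$, $X_3X_4^7$, $X_3^2X_4^{13}$, $X_1X_2^7$, $X_1X_2^7X_3X_4^7$, $X_1X_2^7X_3^2X_4^{13}$, together with their coefficients in $\C(x,q)$. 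Note that every one of them has $n_5=n_6=0$, so the factor $X_3^{n_5}X_4^{n_6}$ appearing in the definitions of $T_3$ and $T_4$ is trivial on all the monomials at hand and those two formulas simplify considerably.

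Next I would apply each of $T_1,T_2,T_3,T_4$ to each of the six monomials, obtaining $24$ Laurent polynomials in $R_2$, and verify along the way that every scalar denominator $1-x^aq^b$ invoked in the definitions of the $T_i$ (and in the geometric-sum identity from which $T_1$ is derived) is a nonzero element of $\C(x,q)$. Since $x$ and $q$ are algebraically independent, $x^aq^b=1$ forces $(a,b)=(0,0)$; the six exponent vectors at hand give $(n_1+n_3,n_2+n_4)\in\{(0,0),(1,7),(2,13),(1,7),(2,14),(3,20)\}$, so in particular $(n_1+n_3,n_2+n_4)\neq(1,8)$ throughout, and one checks directly that none of the relevant $x^aq^b$ equals $1$. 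I would then assemble these $24$ terms with the six original monomials of $\mathcal{J}_2^1$, weighted respectively by $1$, $1-q^{-1}$, and $(1-q^{-1})^2$, and finally multiply through by the prefactor $\tfrac{1-xq^6}{(1-xq^7)^2(1-x^2q^{13})}$.

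The last step is to simplify the resulting expression to the closed form in the statement. After full expansion the only surviving monomials are $1$, $X_1X_2^8$, $X_1^2X_2^{14}$, $X_2X_3X_4^7$, $X_1X_2^7X_3X_4^7$, $X_1^2X_2^{14}X_3X_4^7$, $X_2X_3^2X_4^{13}$, and $X_1X_2^8X_3^2X_4^{13}$; for each one must check that its accumulated coefficient — a sum of rational functions in $x,q$ carrying assorted denominators contributed by the various $T_i$ — collapses to the coefficient recorded in the proposition. The mechanism making this work is the factorization $1-x^2q^{14}=(1-xq^7)(1+xq^7)$ together with several further cancellations among the $T_i$-denominators, which is how the $(1-xq^7)^2$ supplied by the prefactor conspires with the rest to leave only $(1+xq^7)^3$, $1-x^2q^{13}$, $1-xq^8$, and the single factor $xq^7$ in the denominators of the final answer. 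The main obstacle is precisely this last simplification: there is no conceptual shortcut, only a substantial volume of rational-function arithmetic, and the denominator cancellations are delicate enough that a hand computation is easy to get wrong. As the authors remark, this is exactly the kind of identity one verifies with the aid of the software \cite{L} and \cite{egut}, although each individual coefficient identity can in principle also be checked by hand.
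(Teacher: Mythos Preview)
Your plan has one genuine gap: you apply $T_3$ and $T_4$ to $\mathcal{J}_2^1$, but the two inner summations that define these operators involve $J_2(B-k,C,C-k+\ell)$ and $J_2(B-k-\ell,C-\ell,C-k-\ell)$, and in both cases the third argument is strictly smaller than the second (since $k\ge 1$ and $0\le\ell\le k-1$, respectively $\ell\ge 1$). By the very case distinction defining $J_2$, these terms are governed by $\mathcal{J}_2^2$, not $\mathcal{J}_2^1$. The displayed formula for $\mathcal{J}_0$ just above the proposition is thus a typo for
\[
\mathcal{J}_0=\frac{1-xq^6}{(1-xq^7)^2(1-x^2q^{13})}\Bigl[\mathcal{J}_2^1+(1-q^{-1})(T_1+T_2).\mathcal{J}_2^1+(1-q^{-1})^2(T_3+T_4).\mathcal{J}_2^2\Bigr],
\]
and indeed the paper's own proof records $[T_3.\mathcal{J}_2^2]$ and $[T_4.\mathcal{J}_2^2]$.

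This matters for your computation. Your key simplifying observation, that every monomial has $n_5=n_6=0$ so that the factor $X_3^{n_5}X_4^{n_6}$ in $T_3,T_4$ is trivial, is false for $\mathcal{J}_2^2$: that polynomial carries the monomials $X_5^2X_6^{13}$ and $X_5X_6^6$, and under $T_3,T_4$ these produce the extra $X_3^{n_5}X_4^{n_6}$ factors and the $n_5,n_6$-dependent denominators that you discarded. Concretely, the paper's $c_2(X_1,X_2)X_3^2X_4^{13}$ term in $[T_3.\mathcal{J}_2^2]$ and the $Q(1,7,1,6)$ term in $[T_4.\mathcal{J}_2^2]$ come precisely from these $X_5,X_6$ contributions; without them you will not recover the stated coefficients of $X_3^2X_4^{13}$ (nor the correct cancellations elsewhere). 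Once you make this correction, your outline---expand into monomials, apply the four operators termwise, reassemble with the weights $1$, $1-q^{-1}$, $(1-q^{-1})^2$, and simplify---is exactly the paper's approach.
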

\begin{proof}
Straightforward calculations give each of the
components of the sum.  We record the
results:
$$\begin{aligned}{}
[T_1.\c J_2^1]=&(1-xq^6)(1-x^2q^{12})\left(
\frac{xq^8}{1-xq^8}-\frac q{1-q} X_1X_2^7 + \frac{q(1-xq^7)}{(1-xq^8)(1-q)} X_1X_2^8 \right)\\ &
- (1-xq^5)(1-x^2q^{13})
\left( \frac q{1-q} + \frac 1{1-xq^6} X_1X_2^7
- \frac{1-xq^7}{(1-q)(1-xq^6)} X_2
\right)X_3 X_4^7\\&\hskip -30pt
+(1-q^{-1})(1-xq^7)(xq^6) \left(
\frac{-1}{1-xq^5}+\frac 1{1-x^2 q^{12}} X_1X_2^7 + \frac{xq^5(1-xq^7)}{(1-xq^5)(1-x^2q^{12})} X_1^{-1} X_2^{-5}
\right)X_3^2X_4^{13}\end{aligned}
$$
$$\begin{aligned}
 {}[T_2.\c J_2^1] &= [T_2.(1-X_1X_2^7)] \c J_4^1=
 \left(\frac{ x^2q^{13}}{1- x^2q^{13}}-\frac{ x q^6X_1 X_2^7}{1-
   xq^6}+\frac{ xq^6 X_1^2 X_2^{13} \left(1-
   xq^7\right)}{\left(1- xq^6\right) \left(1- x^2q^{13}\right)}
 \right)\c J_4^1
\end{aligned}$$
$$\begin{aligned}{}
[T_3. \c J_2^2]=&
\left((1-xq^6)(1-x^2q^{12}) -(1-xq^5)(1-x^2q^{13})X_3X_4^7\right)\cdot [T_3. (1-X_1X_2^7)]
+ c_2(X_1, X_2)X_3^2 X_4^{13},\end{aligned}
$$ where
$$\begin{aligned}{}
T_3. (1-X_1X_2^7)&=
\frac{x^2q^{14}}{(1-x^2q^{14})(1-x^2q^{13})}- \frac{X_1X_2^7xq^7}{(1-xq^7)(1-xq^6)}\\&
-\frac{X_1^2X_2^{13} xq^6(1-xq^7)}{(1-q^{-1})(1-xq^6)(1-x^2q^{13}) }
+\frac{X_1^2X_2^{14}xq^7}{(1-q^{-1})(1-x^2q^{14})}
\end{aligned}$$
$$\begin{aligned}
c_2= (1-xq^5)(1-x^2q^{13})&\left[
\frac{xq^8}{(1-xq^8)(1-x^2q^{13})}-\frac{X_1X_2^7q}{(1-xq^6) (1-q)}\right.\\ {}&\left.
+\frac{X_1X_2^8q}{(1-xq^5)(1-xq^8)}
-\frac{X_1X_2^{13}xq^6(1-xq^7)}{(1-xq^6)(1-x^2q^{13})(1-xq^5)}
\right]\\
-(1-xq^6)(1-x^2q^{12}])&\left[
\frac{q}{(1-q)(1-x^2q^{13})}-\frac{X_1X_2^7x^{-1}q^{-6}}{(1-xq^6)(1-x^{-1}q^{-6})}\right.\\&\left.
+\frac{X_2 x^{-1}q^{-6}}{(1-x^2q^{12})(1-q)} -\frac{X_1^2X_2^{13}xq^6(1-xq^7)}{(1-xq^6)(1-x^2q^{13})(1-x^2q^{12})}
\right]
\end{aligned}$$
Finally
$$\begin{aligned}
{}[T_4.\c J_2^2]=&
(1-xq^6)(1-x^2q^{12})Q(0,0,0,0)
-(1-xq^5)(1-x^2q^{13})Q(1,7,0,0)X_3X_4^7\\&
-[(1-xq^6)(1-x^2q^{12})Q(2,13,0,0)-(1-xq^5)(1-x^2q^{13})Q(1,7,1,6)]X_3^2X_4^{13}.
\end{aligned}
$$
where
$$\begin{aligned}
Q(n_3,n_4,n_5,n_6)&=
 \frac {x^{3-{n_3}-2 {n_5}}
   q^{22-{n_4}-2 {n_6}}}{\left(1-x^{2-{n_5}}
   q^{14-{n_6}}\right) \left(1-x^{1-{n_3}-{n_5}}
   q^{8-{n_4}-{n_6}}\right)}\\&\qquad
-\frac {X_1 X_2^7 x^{1-n_3-2 n_5}
   q^{8-n_4-2n_6}}{\left(1-x^{1-{n_5}}
   q^{7-{n_6}}\right) \left(1-x^{-{n_3}-{n_5}}
   q^{1-{n_4}-{n_6}}\right)}\\&\qquad\quad- \frac {x^{1-{n_5}}q^{7-{n_6}}
   X_1^{2-{n_5}}  X_2^{14-{n_6}}
   \left(1-q^7 x\right)}{\left(1-x^{{n_3}+1} q^{{n_4}+6}\right)
   \left(1-x^{1-{n_5}} q^{7-{n_6}}\right)
   \left(1-x^{2-{n_5}}
   q^{14-{n_6}}\right)}\\&\qquad\qquad+ \frac {x^{1-{n_5}} q^{7-{n_6}}
   X_1^{1-{n_3}-{n_5}}
   X_2^{8-{n_4}-{n_6}} \left(1-q^7 x\right)}{\left(1-x^{{n_3}+1}
   q^{{n_4}+6}\right) \left(1-x^{-{n_3}-{n_5}}
   q^{1-{n_4}-{n_6}}\right) \left(1-x^{1-{n_3}-{n_5}}
   q^{8-{n_4}-{n_6}}\right)}.\end{aligned}
$$
Totalling up the contributions, multiplying by
$
\frac{(1-xq^6)}{(1-xq^7)^2(1-x^2q^{13})},
$ and simplifying gives the result.  \end{proof}
\subsection{Final calculation of $I(s,t)$}
In this section we complete
the proof of theorem \ref{thm:  final formula for I(s,t)}.
First, by proposition \ref{prop J in terms of J'},
$J(a,b,c) =
P_0(x,q)
(1-
(xq^7)^{A+1}
)
J_0(b,c),$ where
$$
P_0(x,q)=
\frac
{(1-x)(1-xq^2)(1-xq^3)(1-xq^4)(1-x^2q^{10})}{(1-xq^6)(1-xq^7)},$$
and by theorem \ref{theorem: I(s,t) in terms of J(a,b,c)}
$$I(s,t)
=\begin{cases}
J(1,1,1)(1+x^3q^{18}),&t_1, t_2 \in \f o ^\times\\
J(1,1,t_1)-x^4q^{26}J(1,1,\frac{t_1}{p^2}),&
t_2 \in \f o^\times, t_1 \notin \f o ^\times\\
J(1,t_2,t_1t_2)-x^2q^{14}J(p,\frac{t_2}p,\frac{t_1t_2}p)+x^5q^{35} J(1,\frac{t_2}{p^2},\frac{t_1t_2}{p^2}),
& t_2 \notin \f o^\times.
\end{cases}
$$

\subsubsection{Case 1:  $t_2 \notin \f o ^\times$}
First, assume  $t_2 \notin \f o ^\times,$ define
$B$ and $C$ in terms of $t_1, t_2$ by
$|t_2|=q^{-B}, \; |t_1 t_2|=q^{-C}.$  Then
$$\begin{aligned}
I(s,t)
&=J(1,t_2,t_1t_2)-x^2q^{14}J(p,\frac{t_2}p,\frac{t_1t_2}p)+x^5q^{35} J(1,\frac{t_2}{p^2},\frac{t_1t_2}{p^2})\\
&=(1-xq^7)\left(J_0(t_2,t_1t_2)-x^2q^{14}(1+xq^7)J_0(\frac{t_2}p,\frac{t_1t_2}p)+x^5q^{35} J_0(\frac{t_2}{p^2},\frac{t_1t_2}{p^2})\right)\\
&= (1-xq^7)[T_0. \mathcal J_0](x^{m+1}, q^{m+1}, x^{n+m+1},q^{n+m+1}),
\end{aligned}
$$
where
$n$ and $m$ are the $\f p$-adic valuations
of $t_1$ and $t_2,$ respectively, and $T_0$
is a linear operator on the ring $R_2$ defined by
$$
T_0 \left( \prod_{i=1}^4 X_i^{n_i}\right)
= \prod_{i=1}^4 X_i^{n_i}(1-x^{2-n_1-n_3}q^{14-n_2-n_4})(1-x^{3-n_1-n_3}q^{21-n_2-n_4}).
$$
Applying the operator $T_0$
to $\c J_0$ as computed in the previous
section gives
$$
{}[T_0.\c J_0]=\frac{(1-xq^6)(1-x^2q^{12})}{(1-xq^7)(1-xq^8)}
   \left[\begin{aligned}
   (1-xq^6)(1-x^3q^{21})
   - (1-xq^6)(1-xq^8)X_3X_4^7\hskip .7in \\
   - q^{-1} X_2X_3X_4^7 (1-xq^5)(1-xq^8)
   \end{aligned}
   \right].
$$
Multiplying by $(1-xq^7)P_0(x,q)=(1-xq^6)(1-x^2q^{12})Z(x,q),$ and plugging in $X_2=q^{m+1}, X_3=x^{n+m+1}$
and $X_4=q^{n+m+1}$
 gives
 the proof in this case.

\subsubsection{Case 2: $t_2 \in \f o^\times$}
In this case the summations
corresponding to $T_1, T_3$ and $T_4$
are empty sums.  Hence these terms
will specialize to zero in that case.
Moreover
$$(\c J_{2}^{1}+ (1-q^{-1}) T_2. \c J_2^1)
=\frac{1}{(1-xq^6)(1-x^2q^{13})} \c J_4^1(x,q,X_1,X_2)
\c J_4^1(x,q,X_3,X_4).$$
The condition $t_2 \in \f o^\times$ translates
to  $X_1=x, X_2=q,$
and
$$
\c J_4^1(x,q,X_1,X_2) =(1-xq^6)(1-xq^7)(1-x^2q^{13}),
$$
so we obtain
$\c J_0= \frac{1-xq^6}{(1-xq^7)(1-x^2q^{13})}\c J_4^1(x,q,X_3,X_4)$
in this case.
Now,
$$
I=
P_0(x,q)(1-xq^7)
\begin{cases}(1+x^3q^{18})
\c J_0 (x,q,x,q), & t_1 \in \f o^\times, \\
\c J_0 (x,q,X_3,X_4) - x^4 q^{26} \c J_0 ( x,q, X_3 x^{-2}, X_4 q^{-2}), & t_2 \notin \f o^\times,
\end{cases}
$$
Simplifying
$$
\c J_0 (x,q,X_3,X_4) - x^4 q^{26} \c J_0 ( x,q, X_3 x^{-2}, X_4 q^{-2})$$
$$= \frac{1-xq^6}{(1-xq^7)(1-x^2q^{13})}
\left( (1-xq^6)(1-x^2q^{12})(1-x^4q^{26})
- (1-xq^5)(1-x^2q^{13})(1-x^2q^{12})X_3X_4^7\right).
$$
If $X_3=x,$ and $x=q,$ this
coincides with $(1+x^3q^{18}) \c J_0(x,q,x,q).$
It follows that the case $t_1 \in \f o^\times$ does
not need to be handled separately.
After incorporating $(1-xq^7)P_0(x,q),$
one has only to check that
$$I_0(n,0; x,q) = (1-xq^8)\left(
(1-xq^6)(1+x^2q^{13})-(1-xq^5)x^{n+1}q^{7n+7}
\right),$$
and this is straightforward.

\end{document}